\documentclass{article}
\usepackage[a4paper,left=4cm,right=4cm,top=3cm, bottom=3.5cm]{geometry}

\usepackage{mathtools}
\usepackage{amsthm}
\usepackage{thmtools} 

\usepackage{amssymb}
\usepackage{dsfont}

\usepackage{slashed} 
\usepackage{mathdots} 

\usepackage{tikz}
\usetikzlibrary{cd}

\usepackage{microtype} 
\usepackage[pdfusetitle, hidelinks]{hyperref} 

\theoremstyle{plain}
\newtheorem{theorem}{Theorem}[section]
\newtheorem*{theorem*}{Theorem}
\newtheorem{corollary}[theorem]{Corollary}
\newtheorem{lemma}[theorem]{Lemma}

\newtheorem{proposition}[theorem]{Proposition}

\theoremstyle{definition}
\newtheorem{definition}[theorem]{Definition}
\newtheorem{remark}[theorem]{Remark}
\newtheorem{example}[theorem]{Example}
\newtheorem*{example*}{Example}

\newcommand{\bN}{\mathbb{N}} 
\newcommand{\bZ}{\mathbb{Z}} 
\newcommand{\bR}{\mathbb{R}} 
\newcommand{\bC}{\mathbb{C}}

\newcommand{\cA}{\mathcal{A}}

\newcommand{\cE}{\mathcal{E}}

\newcommand{\cK}{\mathcal{K}}
\newcommand{\cL}{\mathcal{L}}

\newcommand{\into}{\xhookrightarrow{}} 
\newcommand{\ideal}{\vartriangleleft{}} 
\newcommand{\pmat}[1]{\begin{pmatrix}#1\end{pmatrix}}
\newcommand{\cl}[1]{\overline{#1}} 
\newcommand{\interior}[1]{{#1}^{\circ}}

\renewcommand{\d}{\partial}
\newcommand{\dbar}{\bar{\partial}}
\newcommand{\rd}{\mathrm{d}} 

\newcommand{\pt}{\mathrm{pt}} 
\newcommand{\loc}{\mathrm{loc}} 
\newcommand{\Cst}{\ensuremath{C^*}}
\newcommand{\id}{\mathds{1}} 

\newcommand{\dirlim}{\varinjlim} 

\DeclareMathOperator{\ind}{ind} 

\DeclareMathOperator{\Dom}{Dom}

\DeclareMathOperator{\Coker}{Coker} 

\DeclareMathOperator{\supp}{supp}

\DeclareMathOperator{\essspec}{Spec_{ess}} 

\DeclareMathOperator{\sgn}{sgn} 
\DeclareMathOperator{\Tr}{Tr} 

\DeclareMathOperator{\wind}{wind}

\DeclarePairedDelimiterX{\set}[1]{\lbrace}{\rbrace}{\,#1\,}
\DeclarePairedDelimiterX{\setcond}[2]{\lbrace}{\rbrace}{\,#1\,\mathclose{}\delimsize :\mathopen{}\,#2\,}

\DeclarePairedDelimiter{\braket}{\langle}{\rangle}
\DeclarePairedDelimiter{\bra}{\langle}{\rvert}
\DeclarePairedDelimiter{\ket}{\lvert}{\rangle}
\newcommand{\ketbra}[2]{\ket{#1}\bra{#2}}

\DeclarePairedDelimiter{\norm}{\lVert}{\rVert}
\DeclarePairedDelimiter{\abs}{\lvert}{\rvert}

\DeclareFontFamily{OT1}{pzc}{}
\DeclareFontShape{OT1}{pzc}{m}{it}{<-> s * [1.2] pzcmi7t}{}
\DeclareMathAlphabet{\mathpzc}{OT1}{pzc}{m}{it}

\newcommand{\pE}{\mathpzc{E}}
\newcommand{\pF}{\mathpzc{F}}
\newcommand{\pG}{\mathpzc{G}}

\DeclareMathOperator{\CompInv}{CompInv}

\numberwithin{equation}{section}

\title{The relative index theorem and a characterization of Fredholm operators}
\author{Magnus Fries \\ \textit{Centre for Mathematical Sciences, Lund University, Sweden}}
\date{\today}

\begin{document}

\maketitle

\begin{abstract}

We extend the relative index theorem on non-compact manifolds to encompass a wide variety of hypoelliptic differential operators of arbitrary order, demonstrating that the change in index when changing a differential operator locally can be calculated locally. We also show that the notion of invertibility at infinity (and coercive at infinity) is not only sufficient condition for an operator to be Fredholm but also necessary, resulting in a general geometric characterization of Fredholmness. This characterization connects to a model for unbounded \(KK\)-theory which assumes the operator to be Fredholm instead of having (locally) compact resolvent, and thus provides a convenient tool for index theory on non-compact spaces.

{
~\newline \noindent 
\textbf{MSC Classification: } 
\href{https://mathscinet.ams.org/mathscinet/msc/msc2020.html?t=19K56}{Index theory [19K56]},
\href{https://mathscinet.ams.org/mathscinet/msc/msc2020.html?t=19K35}{Kasparov theory (\(KK\)-theory) [19K35]},
\href{https://mathscinet.ams.org/mathscinet/msc/msc2020.html?t=58B34}{Noncommutative geometry (à la Connes) [58B34]}, \href{https://mathscinet.ams.org/mathscinet/msc/msc2020.html?t=47A53}{(Semi-) Fredholm operators; index theories [47A53]}.
}

\end{abstract}

\tableofcontents

\section{Introduction}

Index theory for elliptic differential operators on closed manifolds is understood using the Atiyah-Singer index theorem \cite{Atiyah_Singer_1963}. For non-compact manifolds and operators that are not elliptic, the theory is more involved. Instead of trying to compute the index of a single operator on a non-compact manifold, an approach originating from Gromov-Lawson's work on scalar curvature \cite{Gromov_Lawson_1983} is to instead look at the difference in index of two differential operators that agree outside compact sets. Then with an appropriate gluing procedure the index difference can be calculated using the Atiyah-Singer index theorem on the compact sets. Such a so-called relative index theorem first appeared in \cite{Gromov_Lawson_1983} for Dirac operators and later generalized by Bunke in \cite{Bunke_1995} in the framework of Kasparov's \(KK\)-theory \cite{Kasparov_1980}. Related work can also be found in \cite{Ballmann_Bar_2012, Nazaikinskii_2015, Bandara_2022}.

In this paper, we show that the relative index theorem in \cite{Bunke_1995} can be proven in great generality without the use of \(KK\)-theory. This demonstrates that the index of a local operator, such as a differential operator, is local in the sense that the change in index when changing the operator locally can be computed locally. We can then connect the relative index theorem to recent extensions of the Atiyah-Singer index theorem (see for example \cite{Mohsen_2022a,Ewert_2023, mohsen2022indexmaximallyhypoellipticdifferential,goffeng2024indextheoryhypoellipticoperators}) to differential operators in the Heisenberg pseudodifferential calculus on filtered manifolds (also referred to as Carnot manifolds) developed in \cite{van_Erp_Yuncken_2019} (see also \cite{Dave_Haller_2022}). Our results produce a computational formula for index differences of hypoelliptic operators on possibly non-compact manifolds that agree outside compact sets.

We also revisit the assumption of \emph{invertibility at infinity} for a differential operator \(D\) used in \cite{Bunke_1995} to produce bounded \(KK\)-cycles, meaning that there is an \(f\in C_c^\infty(M)\) such that \(f+D^\dagger D\) is invertible. A similar assumption was present in the work of Gromov-Lawson \cite[Assumption 4.16]{Gromov_Lawson_1983} and invertibility at infinity also relates to the notion of \emph{coercive at infinity} in \cite[Definition 8.2]{Ballmann_Bar_2012} and \cite{Bandara_2022} which is a norm estimate for sections supported outside a compact set. The inspiration for invertibility at infinity comes from a Dirac operator \(\slashed{D}\) on a spin manifold for which we have the Bochner-Lichnerowicz formula \(\slashed{D}^2 = \nabla^\dagger \nabla + \frac{\kappa}{4}\), where \(\nabla^\dagger \nabla\) is the connection Laplacian and \(\kappa\) is the scalar curvature \cite[Theorem 8.8]{Lawson_Michelsohn_1989}. For any \(f\in C^\infty(M)\) such that \(\frac{\kappa}{4} + f \geq \epsilon > 0\) we see that \(f+\slashed{D}^2\) is invertible, and in particular if the scalar curvature is (strictly) positive outside a compact set we can find such an \(f\) in \(C_c^\infty(M)\). Invertibility at infinity directly implies (left-)Fredholmness, and we will show that it is in fact a characterization of (left-)Fredholmness. Hence, showing that the assumption made in \cite{Gromov_Lawson_1983, Bunke_1995, Ballmann_Bar_2012} is not only sufficient but indeed necessary for Fredholmness. This also lets us prove a generalization of the classical result Persson's lemma in spectral theory describing the essential spectrum of a self-adjoint operator bounded from below.

Our characterization of Fredholmness enables us to naturally connect the bounded \(KK\)-cycles in \cite{Bunke_1995} to Wahl's \emph{truly unbounded Kasparov modules} in \cite[Definition 2.4]{Wahl_2007}. Truly unbounded Kasparov modules differ from the usual definition of unbounded Kasparov modules in that they do not require locally compact resolvent and instead assumes that the operator is Fredholm. As such, truly unbounded Kasparov modules can always be extended to a unital \Cst-algebra and creates a convenient tool for using \(KK\)-theory in index theory for non-compact spaces. For instance, in Lafforgue's work on the Baum-Conne conjecture in the case of groups acting properly on
manifolds with negative or zero sectional curvature bounded from below, the representative used for the \(\gamma\)-element \cite[Proposition 3.1.2]{Lafforgue_2002} can be seen as coming from a truly unbounded Kasparov module.

The geometric setup for the relative index theorem starts of with two manifolds \(M_{11}\) and \(M_{22}\) (possibly with boundary) with open covers \(M_{11}=V_1\cup W_1\) and \(M_{22}=V_2\cup W_2\) such that there is a diffeomorphism \(V_1\cap W_1\simeq V_2\cap W_2\). We will identify the intersections with a manifold \(U\), which we assume is a non-empty manifold without boundary embedded precompactly in \(M_{11}\) and \(M_{22}\). See \autoref{fig:manifold_parts_pic} for a schematic illustration of \(M_{11}\) and \(M_{22}\). Now, \(V_1\) can be glued together at \(U\) to either \(V_2\) or \(W_2\) and retain a manifold structure, and we will assume that this is true for \(W_2\). That is, we assume that the pushout \(M_{12}\coloneq V_1\cup_U W_2\) inherits a manifold structure. Then \(M_{21} = V_2 \cup_U W_1\) is necessarily also a manifold. 

\begin{figure}[ht]
    \centering
    \setlength{\unitlength}{208.52128637bp}
    \begin{picture}(1,0.73557464)
        \setlength\tabcolsep{0pt}
        \put(0,0){\includegraphics[width=\unitlength,page=2]{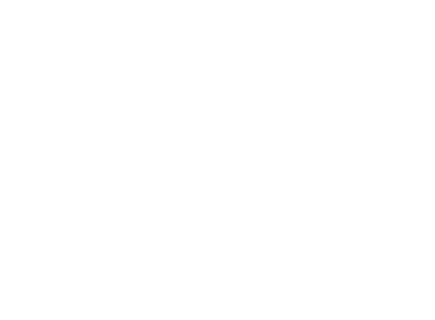}}
        \put(0,0){\includegraphics[width=\unitlength,page=3]{manifold_parts.pdf}}
        \put(0.64664495,0.50125503){\(U\)}
        \put(0.22479986,0.17166377){\(U\)}
        \put(0.53708349,0.69711816){\(V_1\)}
        \put(0.78716117,0.33175793){\(W_1\)}
        \put(0.11511602,0.33113108){\(V_2\)}
        \put(0.36656024,0.01118359){\(W_2\)}
    \end{picture}
    \caption{A schematic illustration of the manifolds \(M_{11}\) and \(M_{22}\) that coincide on an open set \(U\).}
    \label{fig:manifold_parts_pic}
\end{figure}

Another way to view this geometric setup is that we have a diagram of manifolds 
\begin{equation}
    \label{eq:the_diagram}
    \begin{tikzcd}
        M_{11} & V_1 \ar{r} \ar{l} & M_{12} \\
        W_1 \ar{u} \ar{d} & U \ar{r} \ar{u} \ar{l} \ar{d} & W_2 \ar{u} \ar{d} \\
        M_{21} & V_2 \ar{r} \ar{l} & M_{22}
    \end{tikzcd}
\end{equation}
where the arrows represent maps that are diffeomorphisms onto their images and that \(M_{ij}=V_i\cup_U W_j\) and \(U=V_i\cap W_j\) for \(i,j=1,2\) if we identify the image with the domain of these maps. Hence, each \(M_{ij}=V_i\cup_U W_j\) is a pushout construction that we assume retain a smooth structure. See \autoref{fig:schematic_pic} for a schematic illustration visualizing the diagram \eqref{eq:the_diagram}. A natural example for a diagram as \eqref{eq:the_diagram} is if \(\Sigma\) is a closed manifold embedded into both \(M_{11}\) and \(M_{22}\) with co-dimension one such that the complement of the image consists of two disjoint open manifolds. Then \(U\) can be taken to be a tubular neighborhood \(\Sigma \times (-1,1)\) of \(\Sigma\) and one could glue the disjoint pieces together to obtain \(M_{12}\) and \(M_{21}\).
\begin{figure}[ht]
    \centering
    \setlength{\unitlength}{253.48789485bp}
    \begin{picture}(1,0.46391964)
        \put(0,0){\includegraphics[width=\unitlength,page=2]{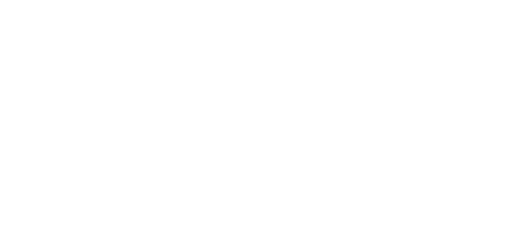}}
        \put(-0.09209692,0.33735943){\(M_{11}\)}
        \put(-0.09209692,0.08602645){\(M_{22}\)}
        \put(1.0240714,0.33735943){\(M_{12}\)}
        \put(1.0240714,0.08602645){\(M_{21}\)}
    \end{picture}
    \caption{A schematic illustration of the manifolds \(M_{11}, M_{22}, M_{12}\) and \(M_{21}\) in the geometric setup for the relative index theorem.}
    \label{fig:schematic_pic}
\end{figure}

On each manifold \(M_{ij}\) we will consider a differential operator \(D_{ij}\) in such a way that they agree where they are defined on the same open sets \(V_i\) and \(W_j\). We will allow for \(D_{ij}\) to have coefficients in a unital \Cst-algebra \(B\) and act on sections of vector \(B\)-bundles, meaning bundles of finitely generated projective \(B\)-modules. For such \(B\)-linear operators, we will use the notion of \(B\)-Fredholm meaning invertible modulo \(B\)-compact operator as an operator from its domain. A \(B\)-Fredholm operator has as an index valued in the \(K\)-theory group \(K_0(B)\) and if \(B=\bC\) then \(B\)-Fredholm coincides with the usual notion of Fredholm on a Hilbert space. See \cite{Miscenko_Fomenko_1979} for more on differential operators with coefficients in a \Cst-algebra. 

Since we allow the manifolds \(M_{ij}\) to have boundaries we will also consider boundary conditions, which will correspond to a choice of closed realizations of each \(D_{ij}\). We do not require these boundary conditions to be local, but they do need to separate each \(V_i\) and \(W_j\). More precisely, let \(E_{ij}\to M_{ij}\) and \(F_{ij}\to M_{ij}\) be vector \(B\)-bundles that satisfies a similar diagram as \eqref{eq:the_diagram} and consider differential operators 
\[
    D_{ij}\colon C_c^\infty(M_{ij}; E_{ij})\to C_c^\infty(M_{ij}; F_{ij})
\]
with coefficients in the \Cst-algebra \(B\). Let \(b_{V_i}\colon C^\infty(V_i;E)\to C^\infty(\tilde{\d}V_i;G_i)\) and \(b_{W_j}\colon C^\infty(W_j;E)\to C^\infty(\tilde{\d}W_j;H_j)\) be boundary operators for some vector \(B\)-bundles \(G_i \to \tilde{\d}V_i\) and \(H_j\to\tilde{\d}W_j\) where \(\tilde{\d}V_i = \d M_{ij}\cap V_i\) and \(\tilde{\d}W_j = \d M_{ij}\cap W_j\). We identify each \(D_{ij}\) with the closed realization as operators on the Hilbert \(B\)-modules \(L^2(M_{ij};E)\) with a core
\[
    \cE_{ij}\coloneq \setcond{f\in C_c^\infty(M_{ij};E)}{b_{V_i}(f|_{V_i}) = 0 \text{ and } b_{W_j}(f|_{W_j}) = 0}.
\]
Note in particular that \(\cE_{ij}\) is preserved by any \(a\in C^\infty(M_{ij})\) that is constant outside \(U\). We will also assume that these closed realizations are regular as unbounded operators on Hilbert \(B\)-modules. In case \(B=\bC\), we are dealing with Hilbert spaces and all closed densely defined unbounded operators are regular. In general however, this is not immediate. See for example \cite[Theorem 2.3]{Hanke_Pape_Schick_2015}.

\begin{theorem}
    \label{thm:relative_index_for_differential_operators}
    Let \(D_{11}, D_{22}, D_{12}, D_{21}\) be regular closed realizations of differential operators with coefficients in a unital \Cst-algebra \(B\) as in the previous paragraph. For \(i,j=1,2\) assume that 
    \begin{enumerate}
        \item \(D_{i1} = D_{i2}\) on \(V_i\);
        \item \(D_{1j} = D_{2j}\) on \(W_j\);
        \item \([D_{ij}, a]\) extends to a compact operator from \(\Dom D_{ij}\) for any \(a\in C_c^\infty(U)\).
    \end{enumerate}
    Then \(D_{11}\) and \(D_{22}\) are \(B\)-Fredholm if and only if \(D_{12}\) and \(D_{21}\) are, and if so
    \[
        \ind_B D_{11} + \ind_B D_{22} = \ind_B D_{12} + \ind_B D_{21}
    \]
    in \(K_0(B)\).
\end{theorem}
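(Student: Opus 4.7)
The plan is to use the paper's characterization of Fredholmness via invertibility at infinity, and then to construct matched parametrices on all four $M_{ij}$ so that both sides of the identity can be computed from the same local building blocks. First I would establish the Fredholmness equivalence: given witnesses $f_{11}\in C_c^\infty(M_{11})$ and $f_{22}\in C_c^\infty(M_{22})$ that $D_{11}^\dagger D_{11} + f_{11}$ and $D_{22}^\dagger D_{22} + f_{22}$ are invertible, splice them via a partition of unity $\chi_V + \chi_W = 1$ adapted to the decomposition $V_i\cup_U W_j$, where $\chi_V,\chi_W$ vary only inside $U$ and are therefore the ``same'' object on all four manifolds. Hypothesis (3) controls the splicing error as a $B$-compact operator on $\Dom D_{ij}$, and a small perturbation argument upgrades the spliced candidate to a genuine invertibility-at-infinity witness for $D_{12}$ and $D_{21}$. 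The converse direction is symmetric.

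\textbf{Matched parametrices.} For the index identity I would mimic the splicing at the level of parametrices. Choose $\chi_V^2 + \chi_W^2 = \id$ with the same $\chi_V,\chi_W$ used on all four manifolds. Construct local parametrices $Q^V_i$ built only from $V_i$-data (for instance by restricting the inverse of an invertible extension of $D_{ij}|_{V_i}$ on an auxiliary closed model supplied by the invertibility-at-infinity witnesses above) and similarly $Q^W_j$ built only from $W_j$-data, and define
\[
    Q_{ij} \coloneq \chi_V\, Q^V_i\, \chi_V + \chi_W\, Q^W_j\, \chi_W.
\]
Since $[D_{ij},\chi_V]$ and $[D_{ij},\chi_W]$ extend to $B$-compact operators on $\Dom D_{ij}$ by hypothesis (3), both $\id - D_{ij} Q_{ij}$ and $\id - Q_{ij} D_{ij}$ are $B$-compact, so $Q_{ij}$ is a parametrix for $D_{ij}$.

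\textbf{Cancellation in $K_0(B)$.} With these parametrices, $\ind_B D_{ij}$ is represented by the $K_0(B)$-difference of the compact remainders $\id - Q_{ij}D_{ij}$ and $\id - D_{ij}Q_{ij}$. Expanding these with the above ansatz splits each $\ind_B D_{ij}$ into a piece that only sees $Q^V_i$ on $V_i$, a piece that only sees $Q^W_j$ on $W_j$, and cross terms supported in $U$ coming from $[D_{ij},\chi_V]$ and $[D_{ij},\chi_W]$. Summing $\ind_B D_{11} + \ind_B D_{22}$ and $\ind_B D_{12} + \ind_B D_{21}$ thus collects identical contributions labelled by $V_1, V_2, W_1, W_2$, while the cross terms at $U$ depend only on the common data $D_{ij}|_U$ and the cut-offs $\chi_V,\chi_W$, so they also match between the two sides, yielding the identity in $K_0(B)$.

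\textbf{Main obstacle.} The hardest step is giving the local parametrices $Q^V_i$ and $Q^W_j$ a rigorous and canonical meaning, since $V_i$ and $W_j$ are only open submanifolds with boundary and do not a priori carry global parametrices of their own. The natural remedy is to use the invertibility-at-infinity witnesses of the first paragraph to produce auxiliary closed (or at least globally invertible) models that contain $V_i$ or $W_j$, and then restrict the inverse back; one must verify that the resulting $Q^V_i$ and $Q^W_j$ are independent up to $B$-compacts of the chosen extension, and that the non-local boundary operators $b_{V_i},b_{W_j}$ behave compatibly with this extension. The commutator hypothesis (3) is the key technical input throughout: it absorbs all of these ambiguities into $B$-compact corrections that vanish in $K_0(B)$, so that the construction descends to the clean cancellation above.
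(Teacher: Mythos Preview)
Your plan has the right raw material—the partition $\chi_V^2+\chi_W^2=1$ with $d\chi$ supported in $U$, and hypothesis (3) to make all splicing errors compact—but the ``cancellation in $K_0(B)$'' step is a genuine gap, and the obstacle you flag (defining $Q^V_i$, $Q^W_j$) is not the real one. Even granting well-defined local parametrices, when you expand $\id-Q_{ij}D_{ij}$ you get operators of the form $\chi_V(1-Q^V_iD^V_i)\chi_V+\chi_W(1-Q^W_jD^W_j)\chi_W+(\text{cross terms in }U)$; these summands are not projections, and there is no additivity principle letting you pass from ``same operator pieces'' to ``same $K_0(B)$-class''. To conclude you would still need an explicit equivalence between the remainder projections for $D_{11}\oplus D_{22}$ and those for $D_{12}\oplus D_{21}$, and nothing in your outline produces one. (Your Fredholmness argument via spliced invertibility-at-infinity witnesses is also more than the theorem needs; the paper does not invoke that characterization here at all.)

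The paper's device is to assemble your cut-offs not as a patched parametrix on each $M_{ij}$ but as a single unitary between the direct sums:
\[
    U=\begin{pmatrix}\chi_{V_1}&-\chi_{W_2}\\ \chi_{W_1}&\chi_{V_2}\end{pmatrix}\colon L^2(M_{11})\oplus L^2(M_{22})\longrightarrow L^2(M_{12})\oplus L^2(M_{21}),
\]
which is unitary precisely because $\chi_{V_i}^2+\chi_{W_j}^2=1$ and $\chi_{V_1}\chi_{W_2}=\chi_{V_2}\chi_{W_1}$ on $U$. A direct computation of $(D_{12}\oplus D_{21})U-U(D_{11}\oplus D_{22})$ produces one matrix of terms $(D_{ij}-D_{kl})\chi$, which vanish by hypotheses (1)--(2), and one matrix of commutators $[D_{ii},\chi]$, which is compact from the domain by hypothesis (3). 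Thus $D_{12}\oplus D_{21}$ is a relatively compact perturbation of $U(D_{11}\oplus D_{22})U^{*}$, and a single lemma (relatively compact perturbations preserve $B$-Fredholmness and $\ind_B$) yields both the Fredholmness equivalence and the index identity simultaneously—no local parametrices, no auxiliary closed models, no separate $K_0$-bookkeeping.
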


The proof of \autoref{thm:relative_index_for_differential_operators} can be found at the end of \autoref{sec:bunkesunitary}. Looking at the schematic illustration of the manifolds \(M_{ij}\) on which the differential operators \(D_{ij}\) act on in \autoref{fig:schematic_pic}, \autoref{thm:relative_index_for_differential_operators} says that the sums of indices over each column of manifolds are equal.

The assumptions on \(D_{ij}\) in \autoref{thm:relative_index_for_differential_operators} are almost the same conditions needed to construct truly unbounded Kasparov \((C(U^{2\pt}),B)\)-modules, where \(U^{2\pt}\) is a two-point compactification of \(U\), making it possible to extend \autoref{thm:relative_index_for_differential_operators} into \(KK\)-theory. Specifically, if each \(D_{ij}\) is self-adjoint and there is some \(m>0\) such that \([D_{ij}, a](1+D_{ij}^2)^{-\frac{1}{2}+\frac{1}{2m}}\) extends to a bounded operator in \(\cL(L^2(M_{ij};E))\) for all \(a\in C_c^\infty(U)\), then the equality in \autoref{thm:relative_index_for_differential_operators} holds in \(K_1(B)\) as well. Note that the condition that \([D_{ij}, a](1+D_{ij}^2)^{-\frac{1}{2}+\frac{1}{2m}}\) extends to a bounded operator is very natural for a pseudodifferential operator that is elliptic of order at least \(m\) in some calculi. Moreover, with the methods used in this paper one could just as well prove the relative index theorem in \(KO_{p,q}(B)\) for a real \Cst-algebra \(B\) simply by extending truly unbounded Kasparov modules to real \Cst-algebras. See \autoref{ex:splitting_z2} for an application of this.

To obtain a more computational relative index theorem, we will instead start with two differential operators that agree outside compact sets. Assuming one of these compact sets is a domain in a closed manifold that the principal symbol can be extended to, we can construct the forth manifold and compute the original index difference with the  Atiyah-Singer index theorem.

\begin{theorem}
    \label{thm:computational_relative_index_theorem}
    Let \(M_1\) and \(M_2\) be filtered manifolds (without boundary) that coincide outside compact sets in the sense that there are \(K_i\subseteq M_i\) compact such that \(M_1\setminus K_1 \cong M_2\setminus K_2\) as filtered manifolds. Also let \(E_i\to M_i\) and \(F_i\to M_i\) be vector bundles such that \(E_1|_{M_1\setminus K_1} \cong E_2|_{M_2\setminus K_2}\) and similarly for \(F_i\). Consider differential operators 
    \[D_i\colon C^\infty(M_i;E_i)\to C^\infty(M_i;F_i)\]
    that that satisfies that
    \begin{enumerate}
        \item \(D_i\) is Heisenberg elliptic of order \(m\) on an open neighborhood of \(K_i\);
        \item \(K_1\) is a domain in a closed filtered manifold \(\widetilde{M}_1\) over which \(D_1\) extends to a Heisenberg elliptic operator;
        \item \(D_1 = D_2\) on \(M_i\setminus K_i\).
    \end{enumerate}
    Identify each \(D_i\) with its closure on \(C_c^\infty(M_i;E_i)\), then \(D_1\) is Fredholm if and only if \(D_2\) is Fredholm and if so
    \begin{equation}
        \label{eq:relative_index_calculated_with_AtiyahSinger}
        \ind D_1 - \ind D_2 = \int_{S^*K_1} \alpha(\sigma_H^m(D_1)) - \int_{S^*K_2} \alpha(\sigma_H^m(D_2))
    \end{equation}
    where \(\alpha\) comes from the Atiyah-Singer index theorem for Heisenberg elliptic operators on a closed manifold and \(S^*K_i\) is the co-sphere bundle over \(K_i\).
\end{theorem}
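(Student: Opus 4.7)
The plan is to apply Theorem \ref{thm:relative_index_for_differential_operators} to a diagram \eqref{eq:the_diagram} whose two ``extra corners'' are closed filtered manifolds, so that the Atiyah--Singer-type theorem for Heisenberg elliptic operators cited in the introduction can be invoked on them and the resulting index integrals telescope.

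First I produce a closed filtered manifold $\widetilde{M}_2$ together with a Heisenberg elliptic extension $\widetilde{D}_2$ of $D_2$. Since $K_1$ is a domain in $\widetilde{M}_1$, I may choose an open neighborhood $\cU$ of $K_1$ sitting simultaneously as a filtered open subset of $M_1$ and of $\widetilde{M}_1$, and arrange that the extension $\widetilde{D}_1$ equals $D_1$ on $\cU$. Set $U := \cU \setminus K_1$; via the given diffeomorphism $M_1 \setminus K_1 \cong M_2 \setminus K_2$, $U$ also embeds into $M_2 \setminus K_2$. Choose an open neighborhood $V_2 \subset M_2$ of $K_2$ matching $\cU$ along $U$, and set $W_2 := \widetilde{M}_1 \setminus K_1$. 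The pushout $\widetilde{M}_2 := V_2 \cup_U W_2$ is then a closed filtered manifold, and the operator $\widetilde{D}_2$ defined as $D_2$ on $V_2$ and $\widetilde{D}_1$ on $W_2$ is well-defined (both restrictions equal $D_1 = D_2$ on $U$) and Heisenberg elliptic everywhere.

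Taking $M_{11} = M_1$, $M_{12} = \widetilde{M}_1$, $M_{21} = M_2$, $M_{22} = \widetilde{M}_2$ with $V_1 := \cU$, $W_1 := M_1 \setminus K_1$, and $V_2, W_2, U$ as above fits everything into diagram \eqref{eq:the_diagram}. Conditions (1) and (2) of Theorem \ref{thm:relative_index_for_differential_operators} hold by construction. For (3), every commutator $[D_{ij}, a]$ with $a \in C_c^\infty(U)$ is a differential operator of Heisenberg order at most $m-1$ with compact support in $U$, and since all four operators restrict to a single Heisenberg elliptic operator on $U$, standard elliptic estimates plus Rellich compactness on the relatively compact piece $U$ show that $[D_{ij}, a]$ is compact from $\Dom D_{ij}$ to $L^2$. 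Since $\widetilde{D}_1$ and $\widetilde{D}_2$ are Heisenberg elliptic on closed filtered manifolds they are automatically Fredholm, so Theorem \ref{thm:relative_index_for_differential_operators} provides both the equivalence of Fredholmness for $D_1$ and $D_2$ and the identity
\[
    \ind D_1 - \ind D_2 = \ind \widetilde{D}_1 - \ind \widetilde{D}_2.
\]

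Finally, the index theorem for Heisenberg elliptic operators on closed filtered manifolds evaluates each term as $\ind \widetilde{D}_i = \int_{S^*\widetilde{M}_i} \alpha(\sigma_H^m(\widetilde{D}_i))$. Because $\widetilde{M}_1$ and $\widetilde{M}_2$ coincide on $W_2$ and carry the same operator there, the integrals over $S^*W_2$ cancel in the difference and what remains is precisely $\int_{S^*K_1} \alpha(\sigma_H^m(D_1)) - \int_{S^*K_2} \alpha(\sigma_H^m(D_2))$, yielding \eqref{eq:relative_index_calculated_with_AtiyahSinger}. The main technical obstacle is the gluing construction of $\widetilde{M}_2$: one must verify that the outer collars of $\partial K_1$ in $M_1$, in $\widetilde{M}_1$, and (through the given diffeomorphism) of $\partial K_2$ in $M_2$ can all be identified as filtered manifolds after suitable shrinking, and that the extension $\widetilde{D}_1$ can be chosen equal to $D_1$ on this collar, so that the pushout carries a smooth closed filtered structure and a well-defined Heisenberg elliptic operator.
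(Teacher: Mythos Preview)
Your proposal is correct and follows precisely the strategy the paper itself outlines. The paper does not give a formal proof of this theorem; it only sketches the idea in the paragraph preceding the statement (``we can construct the fourth manifold and compute the original index difference with the Atiyah--Singer index theorem'') and illustrates the classical special case in Example~2.1. Your argument---build $\widetilde{M}_2$ by gluing a neighborhood of $K_2$ to $\widetilde{M}_1\setminus K_1$ along the collar $U$, place $M_1,\widetilde{M}_1,M_2,\widetilde{M}_2$ into diagram~\eqref{eq:the_diagram}, invoke Theorem~\ref{thm:relative_index_for_differential_operators}, then apply the closed-manifold index formula and cancel the common $W_2$-contribution---is exactly this sketch made explicit, and your verification of condition~(3) via local Heisenberg-elliptic regularity plus Rellich is the intended mechanism (cf.\ the discussion after Proposition~4.4).
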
 

In the case of classically elliptic operators, in \autoref{thm:computational_relative_index_theorem} the closed manifold \(\widetilde{M}_1\) can easily be constructed by a doubling of \(K_1\) and extending the symbol of \(D_1\). Also, one could construct a theorem like \autoref{thm:computational_relative_index_theorem} for any calculus that have a local index formula on compact subspaces.

To place the relative index theorem in \(KK\)-theory we sought an unbounded model corresponding to the bounded cycles in \cite{Bunke_1995}. This leads us to examining the assumption invertibility at infinity assumption further and realize that it is in fact a characterization of Fredholmness. The author believes this characterization to be an important tool for index theory of independent interest.

We will use the notion left-\(B\)-Fredholm, meaning a \(B\)-linear operator on a Hilbert \(B\)-module that is left-invertible up to \(B\)-compact operators. In particular, if an operator \(D\) and its adjoint \(D^*\) are both left-\(B\)-Fredholm, then \(D\) is \(B\)-Fredholm. If \(B=\bC\) and we consider an operator on a Hilbert space, then the operator is left-\(B\)-Fredholm (or simply left-Fredholm) if it has finite dimensional kernel and closed range. For example, the gradient on a closed manifold is left-Fredholm. An abstract characterization of left-\(B\)-Fredholmness can be found as \autoref{thm:characterization_of_Fredholm_abstract} in the bulk of the text, which we can specialize to a geometric setting as follows.

\begin{theorem}
    \label{thm:characterization_of_Fredholm_geometric}
    Let \(D\colon L^2(M;E)\dashrightarrow L^2(M;F)\) be a regular operator acting on the Hilbert \(B\)-modules where \(E\) and \(F\) are vector \(B\)-bundles. Assume that \(\Dom D \subseteq H_{\loc}^s(M;E)\) for some \(s>0\) where \(H_{\loc}^s(M;E)\) denotes a local Sobolev space. Then the following are equivalent:
    \begin{enumerate}
        \item \(D\) is left-\(B\)-Fredholm;
        \item There is a positive \(k\in C_c^\infty(M)\) such that \(k + D^* D\) is invertible;
        \item There is a \(k\in C_c^\infty(M)\) such that
        \[
            \norm{f}\lesssim \norm{k f} + \norm{Df}
        \]
        for any \(f\) in a core of \(D\).
    \end{enumerate}
\end{theorem}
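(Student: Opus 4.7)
My plan is to reduce the geometric statement to the abstract characterization in Theorem \ref{thm:characterization_of_Fredholm_abstract} and to check $(2) \Leftrightarrow (3)$ by a direct manipulation. The one geometric ingredient I would need is a Hilbert $B$-module version of Rellich's compactness theorem: for $k \in C_c^\infty(M)$ and $s > 0$, multiplication by $k$ is $B$-compact from $H^s_c(M;E)$ into $L^2(M;E)$. Combined with the hypothesis $\Dom D \subseteq H^s_{\loc}(M;E)$, this upgrades multiplication by any $k \in C_c^\infty(M)$ to a $B$-compact operator $\Dom D \to L^2(M;E)$ when $\Dom D$ is equipped with the graph norm of $D$.

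The equivalence $(2) \Leftrightarrow (3)$ I would handle by Peter--Paul. Expanding $\langle f, (k+D^*D)f\rangle = \langle f, kf\rangle + \|Df\|^2$ and using the estimate $\langle f, kf\rangle \leq \tfrac{\varepsilon}{2}\|f\|^2 + \tfrac{1}{2\varepsilon}\|kf\|^2$, the lower bound $k + D^*D \geq \varepsilon$ is equivalent to $\|f\|^2 \lesssim \|kf\|^2 + \|Df\|^2$, hence to $(3)$. The positivity constraint in $(2)$ versus its absence in $(3)$ is reconciled by passing from $k$ to $k^2 \in C_c^\infty(M)$, which is automatically positive.

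For $(3) \Rightarrow (1)$, applying the abstract criterion of Theorem \ref{thm:characterization_of_Fredholm_abstract} with multiplication by $k$ as the $B$-compact witness yields left-$B$-Fredholmness directly. A hands-on alternative is to construct the explicit parametrix $Q := (k^2+D^*D)^{-1}D^*$; boundedness of $Q$ comes from $\|D(k^2+D^*D)^{-1/2}\|^2 = \|D^*D(k^2+D^*D)^{-1}\| \leq 1$, and on the core $\Dom(D^*D)$ one has $QD = I - (k^2+D^*D)^{-1}k^2$, extending to $\Dom D$ by graph-norm density. The remainder is $B$-compact since its adjoint $k^2(k^2+D^*D)^{-1}$ factors through the $B$-compact map $\Dom D \to L^2$ given by multiplication by $k$.

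Finally, for $(1) \Rightarrow (3)$, let $P$ be the orthogonal projection onto the finitely generated projective Hilbert $B$-submodule $\Ker D \subseteq L^2(M;E)$. Closed range of $D$ gives $\|(I-P)f\| \lesssim \|Df\|$ by applying the open mapping theorem to $D|_{(\Ker D)^\perp}$. Choosing generators $e_1,\dots,e_n$ of $\Ker D$, which are $L^2$ sections, their mass outside a compact set tends to zero, so one can pick a cutoff $k \in C_c^\infty(M)$ with $0 \leq k \leq 1$ and support so large that $\|(1-k)e_i\|$ is uniformly small; this translates into $PkP \geq \tfrac{1}{2}P$, equivalently $\|Pf\| \lesssim \|kPf\|$. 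Combining with $\|kPf\| \leq \|kf\| + \|(I-P)f\|$ yields $\|f\| \leq \|Pf\| + \|(I-P)f\| \lesssim \|kf\| + \|Df\|$, which is $(3)$. The main obstacle is producing the uniform bound $PkP \geq \tfrac{1}{2}P$ in the Hilbert $B$-module setting: for $B = \bC$ this is immediate from finite-dimensionality and compactness of the unit sphere in $\Ker D$, but for general $B$ one has to upgrade per-generator $L^2$-decay to a uniform module-level estimate by working through a frame for $\Ker D$ and tracking $B$-valued inner products.
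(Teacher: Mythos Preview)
Your arguments for $(2)\Leftrightarrow(3)$ and $(3)\Rightarrow(1)$ are fine and match the paper's intent: once you observe that multiplication by $k\in C_c^\infty(M)$ is $B$-compact from $\Dom D$ to $L^2$ (via the inclusion $\Dom D\hookrightarrow H^s_{\loc}$ and Rellich), the abstract Proposition~\ref{thm:characterization_of_Fredholm_abstract} handles both directions, and your explicit parametrix is exactly the one appearing in the paper's proof of $3\Rightarrow 1$ there.

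The genuine gap is in $(1)\Rightarrow(3)$. You assume that a left-$B$-Fredholm operator has closed range and that $\Ker D$ is a finitely generated projective (hence complemented) submodule, so that the orthogonal projection $P$ onto $\Ker D$ exists. For $B=\bC$ this is standard, but for a general \Cst-algebra $B$ none of these three properties is guaranteed by left-$B$-Fredholmness; the paper even remarks explicitly that ``a $B$-Fredholm operator does not necessarily have closed range nor be finitely generated kernel.'' So the step ``closed range of $D$ gives $\norm{(I-P)f}\lesssim\norm{Df}$'' and the very existence of $P$ and of a finite generating set $e_1,\dots,e_n$ are all unjustified. The obstacle you flag (upgrading per-generator decay to $PkP\geq\tfrac12 P$) is downstream of these more basic failures.

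The paper circumvents this entirely. From left-$B$-Fredholmness it extracts a $B$-\emph{finite rank} $K$ with $1-SD=K$ (this does not require $\Ker D$ to be finitely generated), and then uses a strict approximate unit $\{\chi_n\}\subseteq C_c^\infty(M)$: since $K$ is $B$-finite rank and $\chi_n\to 1$ strictly, one has $\norm{K(1-\chi_n)}\to 0$ (Lemma~\ref{thm:finit_rank_and_approx_unit}), whence
\[
\norm{x}\leq\norm{Kx}+\norm{S}\norm{Dx}\leq\norm{K(1-\chi_n)}\norm{x}+\norm{K}\norm{\chi_n x}+\norm{S}\norm{Dx}
\]
and absorbing the first term gives $(3)$ with $k=\chi_n$. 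This is Proposition~\ref{thm:approx_unit_iff_Fred}, and it is the key device you are missing.
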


In the bulk of the text we prove \autoref{thm:characterization_of_Fredholm_geometric} using the abstract result presented in \autoref{thm:approx_unit_iff_Fred}. Note that in the case \(M\) is a closed manifold, all the statements in \autoref{thm:characterization_of_Fredholm_geometric} follow directly from the assumed inclusion \(\Dom D \subseteq H_{\loc}^s(M;E)\) since in that case Rellich theorem implies that \(D\) has compact resolvent and therefore discrete spectrum. Such an inclusion is expected for any (pseudo-) differential operator that is in some sense elliptic of positive order.

\autoref{thm:characterization_of_Fredholm_geometric} captures the notion of invertibility at infinity that was used in \cite{Bunke_1995}, but not the notion of coercive at infinity that was used in \cite{Ballmann_Bar_2012, Bandara_2022}. To obtain such results we need the additional assumption on our operator \(D\) that its commutator with smooth compactly supported functions is compact from the domain of \(D\), which is a natural assumption in most pseudodifferential calculi and also present in truly unbounded Kasparov modules. In particular, the notion of coercive at infinity can also be used for higher-order operators, extending to the first-order (elliptic) case treated in \cite{Ballmann_Bar_2012, Bandara_2022}.

\begin{theorem}
    \label{thm:characterization_of_Fredholm_geometric_extended}
    Let \(D\colon L^2(M;E)\dashrightarrow L^2(M;F)\) be as in \autoref{thm:characterization_of_Fredholm_geometric} and assume additionally that for any \(a\in C_c^\infty(M)\) we have that \(a\Dom D\subseteq \Dom D\) and \([D,a](1+D^*D)^{-\frac{1}{2}}\) is compact. Then \(D\) is left-\(B\)-elliptic if and only if there is a compact \(K\subseteq M\) such that
    \[
        \norm{f}\lesssim \norm{Df}
    \]
    for any \(f\) in a core of \(D\) with \(\supp f\subseteq M\setminus K\). Furthermore, let
    \[
        \Sigma \coloneq \sup_{\substack{K\subseteq M \\ \text{compact}}} \inf_{\substack{f\in \Dom D \setminus \set{0} \\ \supp f \subseteq M\setminus K}} \frac{\norm{Df}}{\norm{f}},
    \]
    then \(D\) is left-\(B\)-Fredholm if and only if \(\Sigma\neq 0\) and \(D + T\) is left-\(B\)-Fredholm for any \(T\in \cL(L^2(M;E), L^2(M;F))\) such that \(\norm{T}<\Sigma\).
\end{theorem}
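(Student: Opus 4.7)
The plan is to reduce both parts to Theorem \ref{thm:characterization_of_Fredholm_geometric} via a cutoff argument, converting the resulting commutator error into a compact operator on the graph-norm domain of \(D\) by exploiting the additional hypothesis.

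For the first equivalence, the forward direction is essentially immediate from Theorem \ref{thm:characterization_of_Fredholm_geometric}: the \(k\in C_c^\infty(M)\) it supplies gives \(\|kf\|=0\) whenever \(\supp f\subseteq M\setminus \supp k\), yielding \(\|f\|\lesssim \|Df\|\) with \(K\coloneq \supp k\). For the converse, fix a compact \(K\) satisfying the estimate and choose \(\psi\in C_c^\infty(M)\) with \(\psi\equiv 1\) on a neighborhood of \(K\). Splitting \(f=\psi f+(1-\psi)f\) and applying the hypothesis to the second piece (which has support in \(M\setminus K\)) produces
\[
    \|f\|\lesssim \|\psi f\|+\|D(1-\psi)f\|\leq \|\psi f\|+\|Df\|+\|[D,\psi]f\|.
\]
The local Sobolev inclusion \(\Dom D\subseteq H_{\loc}^s\) together with Rellich's theorem makes \(f\mapsto \psi f\) compact from \(\Dom D\) to \(L^2(M;E)\), while compactness of \([D,\psi](1+D^*D)^{-\frac{1}{2}}\) combined with the boundedness of \((1+D^*D)^{\frac{1}{2}}\colon \Dom D\to L^2(M;E)\) makes \([D,\psi]\) compact from \(\Dom D\) as well. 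Thus one has \(\|f\|\lesssim \|Df\|+\|Cf\|\) with \(C\) a compact operator on \(\Dom D\), which is the abstract semi-Fredholm criterion (as encoded in Theorem \ref{thm:approx_unit_iff_Fred}) and yields that \(D\) is left-\(B\)-Fredholm.

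For the second part, the equivalence between \(\Sigma\neq 0\) and left-\(B\)-Fredholmness of \(D\) follows immediately from the first part. Assuming \(D\) is left-\(B\)-Fredholm and \(\|T\|<\Sigma\), pick \(\epsilon>0\) with \(\|T\|<\Sigma-\epsilon\) and \(K\) so that \(\|Df\|\geq (\Sigma-\epsilon)\|f\|\) for core sections supported outside \(K\). The cutoff estimate sharpens to \(\|f\|\leq \|\psi f\|+(\|Df\|+\|[D,\psi]f\|)/(\Sigma-\epsilon)\); substituting \(\|Df\|\leq \|(D+T)f\|+\|T\|\|f\|\) and absorbing the resulting \(\|f\|\) term produces
\[
    \Bigl(1-\tfrac{\|T\|}{\Sigma-\epsilon}\Bigr)\|f\|\leq \tfrac{1}{\Sigma-\epsilon}\|(D+T)f\|+\|\psi f\|+\tfrac{1}{\Sigma-\epsilon}\|[D,\psi]f\|,
\]
with strictly positive left-hand coefficient. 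Since the two auxiliary terms are compact operators of \(f\) on \(\Dom D=\Dom(D+T)\), this is exactly the semi-Fredholm estimate for \(D+T\), so \(D+T\) is left-\(B\)-Fredholm. The reverse implication is trivial upon taking \(T=0\).

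The principal obstacle is the backward direction of the first part: the naive cutoff produces a remainder involving \([D,\psi]\) rather than a multiplier, and promoting it to a genuine compact operator on the graph-norm domain is exactly what the compact commutator hypothesis \([D,a](1+D^*D)^{-\frac{1}{2}}\) is tailored to handle. Once that step is in place, the second part is bookkeeping with the same cutoff estimate, and the threshold \(\Sigma\) emerges naturally from the constant \(1/(\Sigma-\epsilon)\) in front of \(\|Df\|\).
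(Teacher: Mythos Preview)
Your argument is sound, with one step that needs a short justification and a route for the perturbation statement that differs from---and is arguably more robust than---the paper's.

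\textbf{The semi-Fredholm step.} In both parts you conclude from \(\norm{f}\lesssim \norm{D'f}+\norm{Cf}\), with \(D'\in\{D,D+T\}\) and \(C=(\psi,[D,\psi])\) compact from \(\Dom D\), that \(D'\) is left-\(B\)-Fredholm, citing \autoref{thm:approx_unit_iff_Fred}. Neither that result nor condition~(3) of \autoref{thm:characterization_of_Fredholm_abstract} applies verbatim: for higher-order \(D\) the commutator \([D,\psi]\) need not lie in \(\cL(L^2(M;E),L^2(M;F))\), whereas those statements require the auxiliary operator to be adjointable on \(L^2\). The conclusion nonetheless holds by the one-line observation that \((D',C)\colon\Dom D\to L^2(M;F)\oplus\pG\) is adjointable and bounded below, hence has an adjointable left inverse \((S_1,S_2)\), whence \(1-S_1D'=S_2C\in\cK(\Dom D)\). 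The paper sidesteps this issue by using \autoref{thm:compact_upper_bound_with_approx_unit} to convert \(\norm{[D,\psi]f}\) into \(\epsilon\norm{f}+\epsilon\norm{Df}+M\norm{\chi_n f}\) with \(\chi_n\in C_c^\infty(M)\), landing squarely inside \autoref{thm:characterization_of_Fredholm_abstract} as stated.

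\textbf{The perturbation statement.} The paper proves \(\norm{(D+T)f}\geq(\Sigma_{K_1}-\norm{T})\norm{f}\) for \(f\) supported off a suitable \(K_1\) and then invokes the first statement for the operator \(D+T\). Your route---keep the coercive estimate for \(D\) itself and only introduce \(T\) through \(\norm{Df}\leq\norm{(D+T)f}+\norm{T}\norm{f}\) after the cutoff---never asks \(D+T\) to satisfy the standing commutator hypothesis. This is a genuine advantage, since for a completely general bounded \(T\) the operator \([T,a]\) need not be compact from \(\Dom D\), so the hypotheses of the first statement are not obviously inherited by \(D+T\).
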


An abstract version of the first statement in \autoref{thm:characterization_of_Fredholm_geometric_extended} is found as \autoref{thm:upper_bound_when_restricted_by_approx_unit} in the bulk of the text. The second statement in \autoref{thm:characterization_of_Fredholm_geometric_extended} is proven in \autoref{sec:perssonslemma} and allows for a short proof of Persson's lemma which is presented as \autoref{thm:perssonslemma}.

We can for example apply \autoref{thm:characterization_of_Fredholm_geometric} to the Dirac operator \(\slashed{D}\) on a complete spin manifold \(M\). We obtain that \((-\frac{\sqrt{\kappa_\infty}}{2}, \frac{\sqrt{\kappa_\infty}}{2})\) is guaranteed to be disjoint from the essential spectrum of \(\slashed{D}\), where \(\kappa_\infty \coloneq \sup_{\substack{K\subseteq M \\ \text{compact}}} \inf_{p\in M\setminus K} \kappa(p)\) and \(\kappa\) is the scalar curvature. 

The content of this paper is organized as follows.

In \autoref{sec:examples} we present how the relative index theorem presented in this paper relates to previous works and give examples on how one could use it to produce new index formulas.

In \autoref{sec:relative_index_theorem} we give a short introduction to Hilbert modules and Fredholm operators, and then present the proof of the relative index theorem in \autoref{thm:relative_index_for_differential_operators}.

In \autoref{sec:characterization_of_Fred} we prove our characterization of Fredholmness in an abstract setting as \autoref{thm:characterization_of_Fredholm_geometric} and \autoref{thm:characterization_of_Fredholm_geometric_extended}. We also relate this characterization to the classical results in spectral theory Weyl's criterion and Persson's lemma.

In \autoref{sec:KKtheory} we turn to \(KK\)-theory and present truly unbounded Kasparov modules. We prove that the two different ways to construct bounded cycles seen in \cite{Bunke_1995} and \cite{Wahl_2007} give the same class and briefly look at the relative index theorem in \(KK\)-theory.

\subsection*{Acknowledgment}

I wish to thank my PhD supervisor Magnus Goffeng for continuous support and a plethora of helpful comments. I would also like to thank Germán Miranda for inspiration to connect this work to spectral theory.

\section{Examples}
\label{sec:examples}

In this section we give illustrative examples on how one could use the relative index theorem presented in \autoref{thm:relative_index_for_differential_operators}. In interest of a brief presentation we will omit details and manipulations outside the scope of this article.

\subsection{The classical relative index theorem}

\begin{example}
    \autoref{thm:computational_relative_index_theorem} generalizes the original relative index theorem found in \cite[Theorem 4.18]{Gromov_Lawson_1983} which we will now present. For \(i=1,2\), let \(\slashed{D}_i\) be a generalized Dirac operator constructed from a Clifford structure on a complete Riemannian manifold \(M_i\) (see \cite[Chapter 1]{Gromov_Lawson_1983}). Assume that there are compact sets \(K_i \subseteq M_i\) with smooth boundary such that \(M_1\setminus K_1\simeq M_2\setminus K_2\) preserving the Clifford structure, and hence \(\slashed{D}_1\) agree with \(\slashed{D}_2\) on \(M_i\setminus K_i\). Let \(\Sigma\) denote the common boundary of \(K_1\) and \(K_2\) and construct the closed manifold \(\widetilde{M}\coloneq K_1\cup_\Sigma (-K_2)\) by gluing \(K_1\) to \(K_2\) along \(\Sigma\). We endow \(\widetilde{M}\) with a Clifford structure by deforming the Clifford structures on \(K_1\) and \(K_2\) so that they are normal to \(\Sigma\) and reversing the Clifford orientation on \(K_2\). Then if \(\slashed{D}_1\) and \(\slashed{D}_2\) are invertible at infinity we have that
    \begin{equation}
        \label{eq:gromovlawson_relativeindextheorem}
        \ind \slashed{D}_{1,+} - \ind \slashed{D}_{2,+} = \ind \mathrlap{\widetilde{\phantom{D}}}\slashed{D}_+
    \end{equation}
    where \(\mathrlap{\widetilde{\phantom{D}}}\slashed{D}\) is the generalized Dirac operator constructed from the Clifford structure on \(\widetilde{M}\). See also the exposition in \cite[Theorem 1.1]{Bunke_1995}. 

    Note that \autoref{thm:relative_index_for_differential_operators} can also prove the \(\Phi\)-relative index theorem in \cite[Theorem 4.35]{Gromov_Lawson_1983}, which is the same as the relative index theorem except that the operators only have to agree on a connected component of the complement of a compact set.
\end{example}

\subsection{The splitting theorem}

\begin{example}
    \label{ex:splitting_formula}
\autoref{thm:relative_index_for_differential_operators} can be used to prove the splitting theorem for a Dirac operator (see for example \cite[Section 9]{Wahl_2007},\cite[Theorem 8.17]{Ballmann_Bar_2012} or  \cite[Theorem 5.9]{Bandara_2022}) which states that the index of a Dirac operator on a closed manifold is equal to the index of the same operator when cutting the manifold along a co-dimension one hypersurface and introducing Atiyah-Patodi-Singer boundary conditions. We will now present a sketch towards a generalization of the splitting theorem to elliptic operators of any order and arbitrary boundary conditions.

Let \(D\) be a Fredholm realization of an elliptic differential operator on a manifold \(M\) (perhaps with boundary and some appropriate boundary conditions on \(D\)) and let \(\Sigma\subseteq M\) be a compact two-sided hypersurface not intersecting \(\d M\). Let 
\[
    M_\Sigma=(M\setminus \Sigma) \cup (\Sigma_1\sqcup \Sigma_2)
\]
be \(M\) with \(\Sigma\) cut out and separated into two boundaries \(\Sigma_1,\Sigma _2\subseteq M_\Sigma\) diffeomorphic to \(\Sigma\), where \(\sqcup\) denotes disjoint union. Note that \(\d M_\Sigma \simeq \d M \sqcup \Sigma_1 \sqcup \Sigma_2\). The operator \(D\) can now also be seen as a differential operator on \(M_\Sigma\), but in order to have a well-behaved closed realization we need to impose boundary conditions at \(\Sigma_1\) and \(\Sigma_2\). We will here adopt the view of a boundary condition as a specified subspace of sections on the boundary. Specifically, let \(B_i\) be an appropriate subspace of sections on \(\Sigma_i\) and let \(D_{B_1\oplus B_2}\) denote the corresponding closed realization with 
\[
    \Dom D_{B_1\oplus B_2} = \setcond{f\in \Dom D}{\gamma f|_{\Sigma_i} \in  B_i \text{ for } i=1,2}
\]
where \(\gamma\) denotes the full trace map. For more on this way to view boundary conditions, see for example \cite{Bandara_Goffeng_Saratchandran_2023}.

We can now compare \(\ind D\) with \(\ind D_{B_1\oplus B_2}\) using \autoref{thm:relative_index_for_differential_operators}. To do this, we need to construct manifolds fitting a diagram such as \eqref{eq:the_diagram}. Fix a normal for \(\Sigma\subseteq M\) such that we obtain an inclusion \(\Sigma\times (-1,1)\into M\) for which we will call the second coordinate \(t\). This open set can also be embedded in \(\Sigma\times S^1\) where we can extend \(D|_{\Sigma\times (-1,1)}\), call it \(D_{S^1}\), by reflecting the symbol of \(D\) along \(t\). Similarly to the procedure constructing \(M_\Sigma\) from \(M\), we can cut out \(\Sigma\) form \(\Sigma\times S^1\) and obtain a manifold diffeomorphic to \(\Sigma\times [-1,1]\), where we similarly impose the same boundary conditions \(B_1, B_2\) on \(D_{S^1}\) to obtain an extension we call \(D_{[-1,1], B_1\oplus B_2}\). Now we can apply \autoref{thm:relative_index_for_differential_operators} to the operators \(D, D_{B_1\oplus B_2}, \ind D_{S_1}, D_{[-1,1], B_1\oplus B_2}\), where specifically we let \(U=\Sigma\times (-1,-\frac{1}{2})\sqcup \Sigma\times (\frac{1}{2},1)\) in \eqref{eq:the_diagram}. Using a local expression of the index of \(D_{S_1}\) with the Atiyah-Singer index theorem, the reflection can be made such that \(\ind D_{S_1}=0\) by symmetry. We obtain that
\[
    \ind D_{B_1\oplus B_2} = \ind D + \ind D_{[-1,1], B_1\oplus B_2},
\]
which is as far as we can go without looking at a specific operator and boundary conditions.

We can for example consider a first-order elliptic differential operator \(D\) such that \(D=c(\d_t + A)\) close to \(\Sigma\), where \(c\) is a vector bundle isomorphism and \(A\) is a differential operator on \(\Sigma\). Then if \(B_1\) and \(B_2\) are orthogonal and preserved by \(A\), then \(\ind \slashed{D}_{[-1,1],B_1\oplus B_2} = 0\) since then the solution operator \(c^{-1}e^{tA}\) preserves the boundary conditions. This situation arises for a Dirac operator if we assume that the metric is product close to \(\Sigma\) and consider APS-boundary condition and its dual. That is, 
\[
    \ind \slashed{D} = \ind \slashed{D}_{\text{APS}\oplus \text{APS\textsuperscript{dual}}}.
\]
\end{example}

\begin{example}
    \label{ex:splitting_z2}
    As the techniques used in this paper to prove the relative index theorem are quite general, one could look at other invariants than the Fredholm index behaving similarly. In particular, the techniques apply to the index invariants for real operators commuting with an action of the Clifford algebra \(Cl_k\) introduced in \cite{Atiyah_Singer_1969}, making it possible to extend \autoref{thm:relative_index_for_differential_operators} to an index theorem in real \(K\)-theory group \(KO_{p,q}(B)\). See for instance \cite{Schroder_1993} for more on real \(K\)-theory.

    For example, a splitting theorem for \emph{odd symmetric} Dirac operators in the secondary \(\bZ_2\)-valued index was recently presented in \cite{braverman2025gluingformulaz2valuedindex}. An odd symmetric operator is an operator \(D\) on a complex Hilbert space with an anti-unitary anti-involution \(\tau\) such that \(\tau D = D^* \tau\). Then one can consider the \(\bZ_2\)-valued index 
    \[
        \ind_\tau D = \dim \ker D \pmod{2}
    \]
    if \(D\) is Fredholm. This is equivalent to the case of real operators commuting with \(Cl_2\). In \cite{braverman2025gluingformulaz2valuedindex} they consider a manifold \(M\) with an involution \(\theta_M\colon M\to M\) and a similar setup as in \autoref{ex:splitting_formula} with the assumption that \(\Sigma\) is \(\theta_M\)-invariant and that there are neighborhoods on either side of \(\Sigma\) that are interchanged by \(\theta_M\). The involution \(\theta_M\) together with an anti-unitary anti-involution of a vector bundle induces an anti-unitary anti-involution \(\tau\) on sections. Then for a Dirac operator \(\slashed{D}\) on \(M\) that is odd symmetric with respect to \(\tau\), they prove the \(\bZ_2\)-valued splitting theorem
    \[
        \ind_\tau \slashed{D} = \ind_\tau \slashed{D}_{\text{APS}\oplus \text{APS\textsuperscript{dual}}}
    \]
    found as \cite[Theorem 4.8]{braverman2025gluingformulaz2valuedindex}. This result can also be proven using the discussion after \autoref{thm:relative_index_for_differential_operators} the same way as sketched in \autoref{ex:splitting_formula} if one assumes that the metric is product close to the boundary, although this assumption is only there for technical reasons.
\end{example}

\subsection{Extending index theorems to non-compact manifolds}

In the next examples, \autoref{ex:euclidian_at_infinity} and \autoref{ex:subLaplace_change_gamma}, we will illustrate how the relative index theorem can be used to extend index theorems from compact manifolds to non-compact manifolds.

\begin{example}
    \label{ex:euclidian_at_infinity}
    Let \(M\) be a non-compact (connected) manifold that is Euclidean at infinity in the sense that there is a compact set \(K\subseteq M\) such that \(M\setminus K\) is diffeomorphic to a finite disjoint union \(\bigsqcup_i \bR^n\setminus K_i\) for compact sets \(K_i\subseteq \bR^n\). Let \(D\) be a differential operator of order \(m\) on \(M\) acting on sections of vector bundles that are trivial on \(M\setminus K\). Assume that \(D\) is elliptic of order \(m\) at each point and that \(D\) restricted to \(\bR^n\setminus K_i\) can be extended a differential operator \(D_i\) on \(\bR^n\) (with trivial bundles) that is elliptic in the sense of Shubin \cite[Chapter IV]{Shubin_2001}. In particular, we can define the full-symbol \(\sigma_{\text{full}}(D)\) of \(D\) on \(M\setminus K\) which then satisfies
    \[
        \abs{\sigma_{\text{full}}(D)(x,\xi)} \gtrsim (\abs{x}^m + \abs{\xi}^m)I_k
    \]
    for all \((x,\xi) \in T^*(M\setminus K)\) outside a compact set, where \(\abs{x}\) is the Euclidean distance induced from \(\bR^n\) on \(M\setminus K\).

    A priori, one might ask if \(D\) is Fredholm. We will make use of \autoref{thm:computational_relative_index_theorem} to both show that \(D\) is Fredholm and calculate the index of \(D\) by referring to the relative index theorem and the Atiyah-Singer index theorem on \(\bR^n\) \cite{Fedosov_1970} (see also \cite[Section 19.3]{Hormander_2007} and \cite{Elliott_Natsume_Nest_1996}). Note that \(D_i\) is Fredholm for each \(i\) by \cite[Theorem 25.3]{Shubin_2001} and that \(D\) agree with \(\bigoplus_i D_i\) on \(M/K\simeq \bigsqcup_i \bR^n\setminus K_i\). Hence, using \autoref{thm:computational_relative_index_theorem} we obtain that \(D\) is Fredholm and 
    \begin{equation}\label{eq:euclidean_at_inf_index_diff}
        \ind D - \ind \bigoplus_i D_i = \int_{S^* K} \alpha(\sigma^m(D)) - \sum_i \int_{S^*K_i} \alpha(\sigma^m(D_i))
    \end{equation}
    where \(\sigma^m\) denotes the principal symbol and \(\alpha\) comes from the Atiyah-Singer index theorem. By choosing a \(K\subseteq M\) bigger, we can assume that \(K_i \subseteq \bR^n\) are balls and 
    \[\abs{\sigma_{\text{full}}(D)(x,0)} \gtrsim \abs{x}^m I_k\] 
    for all \(x\in M\setminus K\). Let \(\tilde{K}_i\subseteq T^*\bR^n\simeq \bR^{2n}\) denote the ball such that \(\tilde{K}_i\cap \bR^n = K_i\), then by the index formula in \cite{Fedosov_1970} we have that
    \begin{equation}\label{eq:euclidean_at_inf_index_Rn}
        \ind \bigoplus_i D_i = \sum_i \int_{\d \tilde{K}_i} \alpha(\sigma_{\text{full}}(D_i))
    \end{equation}
    where \(\alpha(\sigma) = \frac{(n-1)!}{(2n-1)! (2\pi i)^n} \Tr((\sigma^{-1}\rd\sigma)^{2n-1})\) explicitly. In combination with \eqref{eq:euclidean_at_inf_index_diff} we hence obtain an index formula for \(D\). To rework this formula further, note that \(\d \tilde{K}_i\) is homotopic to
    \[\d(B^*K_i)=S^*K_i \cup B^*K_i|_{\d K_i}\] 
    where \(B^*K_i\) denotes a ball bundle in \(T^*K_i\). In the index formula for \(D\) this means that the integral over \(S^*K_i\) in \eqref{eq:euclidean_at_inf_index_diff} will cancel against the \(S^* K_i\)-part of the integral in \eqref{eq:euclidean_at_inf_index_Rn} leaving only an integral over \(B^*K_i|_{\d K_i}\). But since \(\bigcup_i \d K_i = \d K\) we then obtain that the index of \(D\) can be expressed as an integral over \(S^*K \cup B^*K|_{\d K}=\d(B^*K)\). That is,
    \begin{equation}\label{eq:euclidean_at_inf_AB}
        \ind D = \int_{\d(B^*K)} \alpha(\tilde{\sigma}^m(D))
    \end{equation}
    where
    \[
    \tilde{\sigma}^m(D) = \begin{cases} 
        \sigma^m(D)(x,\xi) & (x,\xi) \in S^*K \\ 
        \sigma_{\text{full}}(D_i)(x,\xi) & (x,\xi) \in B^*K|_{\d K_i}
    \end{cases}\]
    is an invertible extension of \(\sigma^m(D)\). One may note the similarities between \eqref{eq:euclidean_at_inf_AB} and the Atiyah-Bott index formula \cite{Atiyah_Bott_1964} for elliptic operators with elliptic boundary conditions on compact manifolds with boundary. We expect that our assumption that \(D\) can be extended  from \(M/K\) to Shubin elliptic differential operators on \(\bigsqcup_i \bR^n\) imposes an elliptic boundary condition on \(D\) on \(K\) (possibly after choosing \(K\) bigger).
\end{example}

\begin{example}
    \label{ex:subLaplace_change_gamma}
    In this example we will consider the sub-Laplacian \(\Delta_H\) on contact manifolds and relate the index of \(\Delta_H + i\gamma Z\) on a compact contact manifold to that of a non-compact manifold, where \(Z\) is the Reeb vector field. 

    Let \(M\) be a contact manifold (without boundary), meaning a \((2n+1)\)-dimensional manifold with a specified co-dimension one subbundle \(H\subseteq TM\) such that \(H=\ker \theta\) for a \(1\)-form \(\theta\) with \(d\theta|_H\) non-degenerate. Note that for simplicity we are assuming that the contact structure is co-oriented meaning that \(\theta\) is globally defined. Let \(Z\) be the unique vector field such that \(\theta(Z)=1\) and \(d\theta(Z,\cdot) = 0\) called the Reeb vector field. By the Darboux theorem, there are locally defined vector fields \(X_i\) and \(Y_i\) spanning \(H\) at each point where these vector fields are defined such that \([X_i,Y_j] = \delta_{ij}Z\).
    
    We consider the horizontal differential \(d_H\colon C^\infty(M)\to C^\infty(M;T^*M)\) such that \(d_Hf(X)=X(f)\) for any vector field \(X\) with values in \(H\) and \(d_Hf(Z)=0\). Define the sub-Laplacian as
    \[
        \Delta_H = d_H^\dagger d_H
    \]
    where the adjoint is taken with respect to some fixed Riemannian metric on \(M\). The operator \(\Delta_H\) locally takes the form
    \[
        \Delta_H = \sum_i -X_i^2 - Y_i^2 + a_i X_i + b_i Y_i + c
    \]
    for functions \(a_i, b_i\) and \(c\).  Note that we use this specific construction of the sub-Laplacian to ensure that it is positive.

    We will consider an operator of the form
    \[
        D_\gamma = (\Delta_H + V)\otimes \id_k + iZ \gamma
    \]
    acting on \(C^\infty(M;\bC^k)\) where \(\id_k\) is the \(k\times k\)-identiy matrix, \(\gamma\in M_k(C^\infty(M))\) and \(V\in C^\infty(M)\) is such that \(V\geq \epsilon > 0\) outside a compact set. If \(M\) is a closed manifold then \(D_\gamma\) is Fredholm if and only if \(\gamma - \lambda \id_k\) is invertible for all \(\lambda \in (n + 2\bZ)\setminus (-n,n)\) and if so there is an explicit formula for the index \cite[Example 6.5.4]{Baum_van_Erp_2014}. We will show that if \(\gamma\) has compact support, then the same formula holds for non-compact manifolds.

    Let \(M\) be a non-compact contact manifold and assume that there is a compact \(K\subseteq M\) and a compact contact manifold \(\widetilde{M}\) such that \(K\) has a contact preserving embedding in \(\widetilde{M}\). Note that by \autoref{thm:characterization_of_Fredholm_geometric} we see that \(D_0\) on \(M\) is Fredholm with index zero since \(\Delta_H\) is (locally) Heisenberg-elliptic of order two and there is a \(k\in C_c^\infty(M)\) such that \(D_0 + k\) is strictly positive. Also, if \(\gamma\) has support in \(K\), then we can extend \(\gamma\) by zero to \(\widetilde{M}\). Denoting \(\tilde{D}_\gamma\) for \(D_\gamma\) on \(\widetilde{M}\), we can apply \autoref{thm:relative_index_for_differential_operators} to \(D_\gamma, D_0, \tilde{D}_\gamma\) and \(\tilde{D}_0\) to obtain that \(D_\gamma\) is Fredholm if and only if \(\gamma - \lambda \id_k\) is invertible for all \(\lambda \in (n + 2\bZ)\setminus (-n,n)\), and then
    \[
        \ind D_\gamma = \ind \tilde{D}_\gamma
    \]
    which can be calculated as in \cite[Example 6.5.4]{Baum_van_Erp_2014}.
\end{example}

\subsection{Isolating non-elliptic points}

\begin{example}
    \label{ex:singular_elliptic}
    In this example we will look at an operator that is elliptic at all but finitely many points. By cutting away these points and calculating their index contribution separately obtain an index formula as a sum over these points. This example is taken from \cite[Example 1.4]{mohsen2022indexmaximallyhypoellipticdifferential} where a considerably larger framework is constructed to deal with these type of situations.

    For a positive smooth function \(f\) on \(S^1\), consider the differential operators on \(S^1\times S^1\) of the form
    \[
        D_f = -\d_x^2 - f(x)\d_y^2 - i \gamma(x, y)\d_y
    \]
    where \(\gamma\in C^\infty(S^1\times S^1)\). If \(f(x)>0\) for all \(x\) then \(D_f\) is a Laplacian up to lower order terms and therefore has index zero. The operator we will consider will be \(D_{g^2}\) for some real-valued function \(g\in C^\infty(S^1)\) such that \(g'(x)\neq 0\) for all \(x\in g^{-1}(0)\). Note that this implies that \(g^{-1}(0)\) is finite. What we aim to do is to use \autoref{thm:relative_index_for_differential_operators} to cut out the problematic points in \(g^{-1}(0)\) in order to express \(\ind D_{g^2}\) as a sum over \(x\in g^{-1}(0)\) of indices of easier operators.
    
    Let \(U_x\) be small disjoint neighborhoods of each \(x\in g^{-1}(0)\) and let \(\chi_x\in C^\infty(S^1)\) be a positive function with \(\supp \chi_x \subseteq U_x\) and such that \(\chi_x(x)\neq 0\). Let \(\chi = \sum \chi_x\) and note that \(D_{g^2}\) and \(D_{g^2 + \chi}\) agree outside \(\bigcup U_x \times S^1\). Next, for each \(x\in g^{-1}(0)\) consider the differential operator on \(\bR\times S^1\) given by
    \[
        D_{x, f} = -\d_z^2 - f(z)\d_y^2 - i \gamma_x(z, y)\d_y
    \]
    for some positive function \(f\in C^\infty(\bR)\) and \(\gamma_x\in C_c^\infty(\bR\times S^1)\) is some function such that \(\gamma_x(z,y)=\gamma(x+z,y)\) for small \(z\). Let \(g_x\in C^\infty(\bR)\) be positive functions such that \(g_x(z)=g(x+z)\) for small \(z\) and \(g_x(z,y) = g'(x)z\) for large \(z\), then \(D_{g^2}\) matches \(D_{x,g_x^2}\) on \(U_x\times S^1\simeq V_x\times S^1\) for some neighborhood \(V_x\subseteq \bR\) of \(0\). Similarly, \(D_{g^2 + \chi}\) and \(D_{x,g_x^2 + \chi_x}\) matches on \(U_x\times S^1\simeq V_x\times S^1\). Also, \(D_{x,g_x^2}\) matches \(D_{x,g_x^2 + \chi_x}\) outside \(\supp{\chi_x}\times S^1\).

    Note that \(D_{g^2 + \chi}\) and \(D_{x, g_x^2 + \chi_x}\) are all Laplacians up to compactly supported lower order terms, and therefore are Fredholm with index zero. Hence, applying \autoref{thm:relative_index_for_differential_operators} to \(D_g^2, \bigoplus_x D_{x, g_x^2 + \chi_x}, D_{g^2+\chi}\) and \(\bigoplus_x D_{x, g_x^2}\) give us that \(D_{g^2}\) is Fredholm if (and only if) all of \(D_{x,g_x^2}\) are, and if so
    \[
        \ind D_{g^2} = \sum_{x\in g^{-1}(0)} \ind D_{x, g_x^2}.
    \]
    By choosing our neighborhoods \(U_x\) small enough we can ensure that \(g_x(z)\) is strictly increasing and \(g_x(z) - g'(x)z\) have small enough support so that we can perform a variable substitution \(y\mapsto w\) on \(S^1\) as \(g_x(z)y = g'(x)z w\). Then \(D_{x,g_x^2}\) takes the form
    \[
        H_{a,b} = -\d_z^2 - a^2z^2\d_w^2 - i b(z, w)\d_w
    \]
    where \(a=g'(x)\) and \(b(z,w) = \gamma_x(z,y)\), which is a perturbation of a series of Harmonic oscillators if one considers Fourier series in the variable \(w\). Although we will not do it here, it is feasible to examine Fredholmness and calculate the index of an operator as \(H_{a,b}\). Indeed, examining the formulas in \cite{mohsen2022indexmaximallyhypoellipticdifferential} we see that \(H_{a,b}\) is Fredholm if and only if \(b(0,y)\notin \pm a(2\bN - 1)\) for all \(y\in S^1\) and 
    \[
        \ind H_{a,b} = \sum_{\lambda \in a(2\bN - 1)} \wind(\lambda - b(0,\cdot)) - \wind(\lambda + b(0,\cdot)).
    \]
    Hence, using the work in this example we see \(D_{g^2}\) is Fredholm if and only if \(\gamma(x,y)\neq \pm g'(x)(2\bN-1)\) for all \((x,y)\in g^{-1}(0)\times S^1\) and if so
    \[
        \ind D_{g^2} = \sum_{x\in g^{-1}(0)} \sum_{\lambda \in g'(x)(2\bN - 1)} \wind(\lambda - \gamma(x,\cdot)) - \wind(\lambda + \gamma(0,\cdot)).
    \]

\end{example}

\subsection{Riemann surfaces and the Riemann-Roch theorem}

\begin{example}
    In this example we will look at Riemann surfaces and use the relative index theorem to reduce the proof of the Riemann-Roch theorem to the case of genus zero and one. We will only do this for the trivial line bundle, meaning we will calculate the difference in dimension between holomorphic functions and (anti-) holomorphic \(1\)-forms. In particular, this is the index of the \(\dbar\)-operator of a Riemann surface, which locally takes the form \(\frac{\d}{\d \bar{z}} d\bar{z}\) for a local analytic coordinate \(z\). We will show that
    \[
        \ind \dbar_g = 1 - g
    \]
    where \(\dbar_g\) denotes the \(\dbar\)-operator on a Riemannian surface \(\Sigma_g\) of genus \(g\geq 0\).

    We first give an explicit construction found in \cite{Seeley_Singer_1998} on how to add a handle (holomorphically) to a Riemann surface therefore increases its genus by one. For simplicity, we will assume that we can obtain any Riemann surfaces, including its analytic structure, inductively from a sphere this way. Let \(\Sigma\) be a Riemann surface and let \(\setcond{z}{\abs{z}<1}\) and \(\setcond{w}{\abs{w}<1}\) be two disjoint complex analytic charts in \(\Sigma\). If we cut out the center of radius \(\frac{1}{2}\) from these discs we can identify the inner boundary of the created annuli \(\setcond{z}{\frac{1}{2}\leq\abs{z}\leq 1}\) and \(\setcond{w}{\frac{1}{2}\leq \abs{w}\leq 1}\) using \(zw=\frac{1}{4}\).  Note that this identification is done holomorphically which ensures that we keep the structure of a Riemann surface, where specifically we have that \(\frac{d z}{z} = -\frac{dw}{w}\). 
    
    Using this procedure of adding a handle to two complex curves \(\Sigma_g\) and \(\Sigma_h\) we note that we can fit \(\Sigma_g,\Sigma_{g+1},\Sigma_h\) and \(\Sigma_{h+1}\) in a diagram like \eqref{eq:the_diagram} with \(U=\setcond{z}{\frac{3}{4}\leq\abs{z}\leq 1}\cup \setcond{w}{\frac{3}{4}\leq \abs{w}\leq 1}\). Since the \(\dbar\)-operator is preserved under holomorphic coordinate changes, we can apply \autoref{thm:relative_index_for_differential_operators} to \(\dbar_{g+1},\dbar_h,\dbar_g\) and \(\dbar_{h+1}\), and obtain
    \[
        \ind \dbar_{g+1} + \ind \dbar_h = \ind \dbar_g + \ind \dbar_{h+1}.
    \]
    Specifically, we can choose \(h=0\), and an induction argument shows that 
    \[
        \ind \dbar_g = \ind \dbar_1 + (g - 1)(\ind \dbar_1 - \ind \dbar_0)
    \]
    for \(g\geq 2\). What remains it to show that \(\ind \dbar_0=1\) and \(\ind \dbar_1=0\). We remind ourselves of Liouville's theorem, which says that any bounded  (anti-) holomorphic function on \(\bC\) is constant. Hence, if we view \(\Sigma_0\) as the Riemann sphere \(\bC \cup\set{\infty}\) and \(\Sigma_1\) as \(\bC\) quotient by a lattice, we see that \(1=\ker \dbar_0 = \ker \dbar_1\). Similarly, we obtain that \(\Coker \dbar_1=1\) since \(d\bar{z}\) can be globally defined on \(\Sigma_1\). Lastly, looking at how \(d\bar{z}\) transforms to \(-\frac{d\bar{z}}{\bar{z}^2}\) under the coordinate change \(z\to\frac{1}{z}\) on the Riemannian sphere \(\Sigma_0\), we can convince ourselves that there are no antiholomorphic \(1\)-forms on \(\Sigma_0\). Therefore, \(\Coker \dbar_0 = 0\) and we are done.
\end{example}

\section{The relative index theorem}
\label{sec:relative_index_theorem}

\subsection{Preliminaries for Hilbert modules and Fredholm operators}

A (right) Hilbert \(B\)-module \(\pE\) over a \Cst-algebra \(B\) is a Banach space with a right \(B\)-module structure such that the norm is induced by a \(B\)-valued inner product satisfying what one would expect from Hilbert spaces. More precisely, we denote the inner produce as \(\braket{\cdot, \cdot}\colon \pE\times \pE \to B\) which satisfies 
\begin{enumerate}
    \item \(\braket{x,y b} = \braket{x,y}b\);
    \item \(\braket{x,y+z} = \braket{x,y} + \braket{x,z}\);
    \item \(\braket{x,y} = (\braket{y,x})^*\);
    \item \(\braket{x,x}\geq 0\) (a positive element of \(B\));
    \item \(\norm{x}^2 = \norm{\braket{x,x}}_B\);
\end{enumerate}
for \(x,y,z\in \pE\) and \(b\in B\). For more on Hilbert modules, see for example \cite{Lance_1995} or \cite[Section 2.2]{Gracia-Bondia_Varilly_Figueroa_2001}. The concrete Hilbert modules we will be concerned with will be \(L^2\)-sections of a vector \(B\)-bundle, a bundle of finitely generated \(B\)-mobuldes. On such \(L^2\)-sections one can look at (pseudo-) differential operator with coefficients in \(B\), which was first considered by Mi{\v{s}}{\v{c}}enko-Fomenko in \cite{Miscenko_Fomenko_1979}. For Hilbert modules \(\pE,\pF\) we let \(\cL(\pE,\pF)\) (or \(\cL(\pE)\) if \(\pE=\pF\)) denote adjointable maps from \(\pE\) to \(\pF\), which automatically are \(B\)-linear and bounded. Note that not all \(B\)-linear bounded operators are adjointable. We equip \(\cL(\pE,\pF)\) with the operator norm and let \(\cK(\pE,\pF)\) (or \(\cK(\pE)\) if \(\pE=\pF\)) denote the closure of \(B\)-finite rank operators in \(\cL(\pE,\pF)\). Note that \(\cL(\pE)\) and \(\cK(\pE)\) are \Cst-algebras and \(\cK(\pE)\) is an ideal in \(\cL(\pE)\).

A Hilbert \((A,B)\)-bimodule \(\pE\) for \Cst-algebras \(A\) and \(B\) is a right Hilbert \(B\)-module with a representation of \(A\) in \(\cL(\pE)\). The representation of \(A\) can also be seen as a left action on \(\pE\), and we will therefore omit writing out the representation entirely. Note that when talking about operators on a Hilbert \((A,B)\)-bimodule \(\pE\) we typically mean operators on \(\pE\) as a right Hilbert \(B\)-module and do not assume any linearity with respect to \(A\).

We say that an operator \(T\in \cL(\pE,\pF)\) between Hilbert \(B\)-modules is left-\(B\)-Fredholm if there is an \(S\in \cL(\pF,\pE)\) such that \(1-ST \in \cK(\pE)\). The definition for right-\(B\)-Fredholm is analogous, and we say that \(T\) is \(B\)-Fredholm if it is both left- and right-\(B\)-Fredholm. Note that in the case when \(T\) is \(B\)-Fredholm such an \(S\) can be chosen to be the same in both cases. Any \(B\)-Fredholm operator \(T\) has a well-defined index \(\ind_B T\) as an element of the \(K\)-theory group \(K_0(B)\). We will refer the actual definition of the index to \cite[Chapter 4]{Gracia-Bondia_Varilly_Figueroa_2001}, only noting that it is independent of compact perturbations, composition with invertible elements and homotopy. If \(B=\bC\) these concepts coincides with the classical notion of Fredholm and Fredholm index on Hilbert spaces. In particular, an operator on a Hilbert space is left-Fredholm if and only if it has finite dimensional kernel and closed range, and zero is an isolated point in the spectrum of any Fredholm operator on a Hilbert space. This is in contrast to a general \(B\) since a \(B\)-Fredholm operator does not necessarily have closed range nor be finitely generated kernel.

For an unbounded operator \(D\) between Hilbert modules \(\pE\) and \(\pF\), we will use the notation \(D \colon \pE \dashrightarrow \pF\) signifying that \(D\) is defined on a \(B\)-submodule of \(\pE\) which we denote \(\Dom D\). Note that unbounded operators on Hilbert modules do not automatically satisfy all properties one would expect from Hilbert spaces. We will therefore often assume an operator \(D\) to be \emph{regular}, meaning that \(D\) is a densely defined closed operator with densely defined adjoint \(D^*\) such that \(1+D^*D\) has dense range. In particular, there is a continuous functional calculus for regular operators. See details in \cite[Chapter 9]{Lance_1995}, and \cite{Kaad_Lesch_2012} for another characterization of regular operators. We will need following lemma regarding regular operators.

\begin{lemma}
    \label{thm:closed_graph_thm_hilbert_modules}
    Let \(T\in \cL(\pE,\pF)\) and \(T\Dom D_1\subseteq \Dom D_2\) for two regular operators \(D_1\) and \(D_2\). Then \(T\) restricts to an element \(T\in \cL(\Dom D_1, \Dom D_2)\). 
\end{lemma}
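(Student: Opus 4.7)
The plan is to equip each $\Dom D_i$ with the graph inner product
\[
    \langle x, y\rangle_{D_i} \coloneq \langle x, y\rangle + \langle D_i x, D_i y\rangle,
\]
which makes it a Hilbert $B$-module: the map $x \mapsto (x, D_i x)$ is an isometric $B$-module isomorphism onto the graph of $D_i$, a closed Hilbert submodule of the corresponding direct sum (closed since $D_i$ is closed), and the resulting norm is equivalent to the usual Banach space graph norm because for positive $a,b \in B$ one has $\max(\|a\|_B, \|b\|_B) \leq \|a+b\|_B \leq \|a\|_B + \|b\|_B$. Two things then remain to be shown: $T$ is bounded between these two Hilbert modules, and the restriction is adjointable.

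For boundedness, the plan is to invoke the Banach space closed graph theorem for $D_2 T$ regarded as a map from $\Dom D_1$ (graph norm) to the codomain of $D_2$. This composition is closed: if $x_n \to x$ in the graph norm of $D_1$ and $D_2 T x_n \to y$, then continuity of $T\in \cL(\pE,\pF)$ gives $T x_n \to T x$, so closedness of $D_2$ forces $T x \in \Dom D_2$ with $D_2 T x = y$. Hence $\|D_2 T x\| \lesssim \|x\|_{D_1}$, and combined with $\|T x\| \leq \|T\|\,\|x\|$ this yields $\|T x\|_{D_2} \lesssim \|x\|_{D_1}$.

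The main obstacle will be adjointability, since bounded $B$-linear maps between Hilbert $B$-modules need not be adjointable in general. The plan is to pass to the bounded transform: regularity of $D_i$ ensures that $R_i \coloneq (1 + D_i^* D_i)^{-1/2}$ lies in $\cL(\pE)$ (resp.\ $\cL(\pF)$) as a positive self-adjoint contraction with $\Ran R_i = \Dom D_i$, and the functional calculus identity $R_i^2 + D_i^* D_i R_i^2 = 1$ gives $\|R_i z\|_{D_i} = \|z\|$ for all $z$. Thus $R_i$ is a unitary $B$-module isomorphism from $\pE$ (resp.\ $\pF$) onto $\Dom D_i$ with the graph inner product, and adjointability of $T\colon \Dom D_1 \to \Dom D_2$ is equivalent to adjointability of the bounded $B$-linear operator $W \coloneq R_2^{-1} T R_1 \colon \pE \to \pF$.

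To exhibit an adjoint for $W$, for $x \in \pE$ and $y \in \Dom D_2 = \Dom R_2^{-1}$, using the symmetry of the unbounded positive self-adjoint operator $R_2^{-1}$ on $\Dom D_2$ and adjointability of $T$ on $\pE,\pF$, I compute
\[
    \langle W x, y\rangle = \langle R_2^{-1} T R_1 x, y\rangle = \langle T R_1 x, R_2^{-1} y\rangle = \langle x, R_1 T^* R_2^{-1} y\rangle.
\]
Setting $V y \coloneq R_1 T^* R_2^{-1} y$ on $\Dom D_2$, the Hilbert module Cauchy--Schwarz bound
\[
    \|V y\| = \sup_{\|x\|\leq 1} \|\langle x, V y\rangle\|_B = \sup_{\|x\|\leq 1} \|\langle W x, y\rangle\|_B \leq \|W\|\,\|y\|
\]
shows $V$ is bounded on the dense submodule $\Dom D_2 \subseteq \pF$, hence extends to an element $\tilde V \in \cL(\pF, \pE)$. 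Continuity of the $B$-valued pairing then extends the identity $\langle W x, y\rangle = \langle x, \tilde V y\rangle$ to all of $\pE \times \pF$, so $\tilde V = W^*$. Transporting back through the unitaries $R_1, R_2$ yields the adjoint of $T$ in the graph inner products, proving $T \in \cL(\Dom D_1, \Dom D_2)$.
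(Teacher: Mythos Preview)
Your proof is correct and follows essentially the same approach as the paper: both invoke the closed graph theorem for boundedness of $T\colon\Dom D_1\to\Dom D_2$, and both produce the adjoint by sandwiching $T^*$ between powers of $(1+D_i^*D_i)$. The paper writes down the candidate adjoint $S=(1+D_1^*D_1)^{-1}T^*(1+D_2^*D_2)$ directly on the core $\Dom D_2^*D_2$ and verifies $\langle x,Sy\rangle_{D_1}=\langle Tx,y\rangle_{D_2}$, whereas you factor through the unitaries $R_i=(1+D_i^*D_i)^{-1/2}\colon\pE\to\Dom D_i$ and work with $W=R_2^{-1}TR_1$; unwinding your adjoint gives the same formula on the same core.
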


\begin{proof}
    We will prove that \(S=(1+D_1^*D_1)^{-1}T^*(1+D_2^*D_2)\) extends to a map \(S\colon \Dom D_2\to \Dom D_1\) that is the adjoint of \(T\colon \Dom D_1 \to \Dom D_2\). For \(x\in \Dom D_1\) and \(y \in \Dom D_2^*D_2\) we have that
    \[
        \braket{x, Sy}_{D_1} = \braket{x, T^*(1+D_2^*D_2)y}_{\pE} = \braket{Tx, (1+D_2^*D_2)y}_{\pF} = \braket{Tx,y}_{D_2}
    \]
    where the subscript denotes which inner product it is. Since \(\Dom D_2^*D_2\) is a core of \(D_2\) and \(T\colon \Dom D_1 \to \Dom D_2\) is continuous by the closed graph theorem, we can extend \(S\) to a map \(S\colon \Dom D_2\to D_1\).
\end{proof}

For a regular unbounded operator \(D\) we say that \(D\) is \(B\)-Fredholm if \(D\) is \(B\)-Fredholm as an operator in \(\cL(\Dom D, \pE)\). This is equivalent to \(D(1+D^*D)^{-\frac{1}{2}}\) being \(B\)-Fredholm. Note that if \(D\subseteq D_e\) and \(D_e\) is left-\(B\)-Fredholm, then \(D\) is left-\(B\)-Fredholm. This can be seen since \(\Dom D\) is a closed submodule of \(\Dom D_e\).

\subsection{Relatively compact perturbations}

The relative index theorem in \autoref{thm:relative_index_for_differential_operators} will follow from a quite simple abstract result presented in this section together with a clever construction due to Bunke \cite{Bunke_1995}. What will be used is \emph{relatively compact perturbations}, meaning that the difference of two operators on a core of both operators extends to a compact operator from one of the operators domains. This implies that the operators have the same domain and the same Fredholm properties. To prove this we only need the following lemma.

\begin{lemma}
    \label{thm:upper_bound_with_compact_operator}
    Let \(D\colon \pE \dashrightarrow \pF\) be a regular operator on Hilbert \(B\)-modules, and \(T\colon \pE \dashrightarrow \pF'\) an operator such that \(T(1+D^*D)^{-\frac{1}{2}}\) can be extended to a \(B\)-compact operator in \(\cK(\pE,\pF')\), then for any \(\epsilon>0\) there is an \(M>0\) such that
    \[
        \norm{Tx}\leq M\norm{x} + \epsilon\norm{Dx}
    \]
    for \(x\in \Dom D\cap \Dom T\).
\end{lemma}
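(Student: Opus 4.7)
The plan is to split the compact extension $K\coloneqq T(1+D^*D)^{-1/2}\in\cK(\pE,\pF')$ into a ``low-frequency'' piece that remains bounded after multiplication by $(1+D^*D)^{1/2}$ and a ``high-frequency'' remainder whose norm can be made as small as one wishes. The starting point is that for $x\in\Dom D\cap\Dom T$, setting $z=(1+D^*D)^{1/2}x\in\pE$, we have $(1+D^*D)^{-1/2}z=x\in\Dom T$, so $z$ lies in the natural domain on which $T(1+D^*D)^{-1/2}$ was originally defined; hence $Tx=Kz=K(1+D^*D)^{1/2}x$. Combined with $\norm{(1+D^*D)^{1/2}x}\leq\norm{x}+\norm{Dx}$ (from $\norm{(1+D^*D)^{1/2}x}^2=\norm{x}^2+\norm{Dx}^2$ on $\Dom D$), this reduces the task to a suitable control of $K$.

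To perform the split, pick a continuous cutoff $\phi_\lambda\colon[0,\infty)\to[0,1]$ with $\phi_\lambda=1$ on $[0,\lambda/2]$ and $\supp\phi_\lambda\subseteq[0,\lambda]$, and put $p_\lambda\coloneqq\phi_\lambda(D^*D)\in\cL(\pE)$ via the continuous functional calculus of the regular positive operator $D^*D$. Then $p_\lambda(1+D^*D)^{1/2}=g_\lambda(D^*D)$, where $g_\lambda(t)=\phi_\lambda(t)(1+t)^{1/2}$ is bounded and continuous, so $Kp_\lambda(1+D^*D)^{1/2}\in\cL(\pE,\pF')$ with some norm $M_\lambda$. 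Writing $K=Kp_\lambda+K(1-p_\lambda)$, we obtain
\[
\norm{Tx}\leq M_\lambda\norm{x}+\norm{K(1-p_\lambda)}\bigl(\norm{x}+\norm{Dx}\bigr),
\]
and the lemma will follow with $M=M_\lambda+\epsilon$ provided we can arrange $\norm{K(1-p_\lambda)}<\epsilon$ by taking $\lambda$ large.

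This final step is the technical heart and the main obstacle. I first show that $1-p_\lambda\to 0$ strongly: the identity $(1-p_\lambda)(1+D^*D)^{-1/2}=h_\lambda(D^*D)$ with $h_\lambda(t)=(1-\phi_\lambda(t))(1+t)^{-1/2}$ gives $\sup h_\lambda\leq (1+\lambda/2)^{-1/2}\to 0$, so $(1-p_\lambda)y\to 0$ in norm for every $y$ in the dense subspace $\Ran(1+D^*D)^{-1/2}=\Dom D$, and then for all $y\in\pE$ since $\norm{1-p_\lambda}\leq 1$. Because each $1-p_\lambda$ is self-adjoint, the standard Hilbert-module argument---approximate $K$ in operator norm by finite-rank operators $\sum_i \ketbra{\xi_i}{\eta_i}$ and estimate $\norm{\ketbra{\xi_i}{\eta_i}(1-p_\lambda)}\leq\norm{\xi_i}\norm{(1-p_\lambda)\eta_i}\to 0$---upgrades this strong convergence to the norm convergence $\norm{K(1-p_\lambda)}\to 0$, which completes the proof.
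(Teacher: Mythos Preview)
Your proof is correct, but it takes a different route from the paper. The paper's argument is more direct: it approximates the compact extension $T(1+D^*D)^{-1/2}$ in norm by a finite-rank operator $\sum_k\ketbra{a_k}{b_k}$ with the right vectors $b_k$ chosen in $\Dom D$ (possible by density). Then $\sum_k\ketbra{a_k}{b_k}(1+D^*D)^{1/2}=\sum_k\ketbra{a_k}{(1+D^*D)^{1/2}b_k}$ is automatically bounded, and the remainder contributes at most $\frac{1}{\sqrt{2}}\epsilon\norm{(1+D^*D)^{1/2}x}\leq\epsilon\norm{x}+\epsilon\norm{Dx}$. This is a three-line proof requiring no functional calculus. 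Your approach trades the finite-rank approximation with domain vectors for a spectral cutoff $p_\lambda=\phi_\lambda(D^*D)$, which achieves the same ``bounded after $(1+D^*D)^{1/2}$ plus small remainder'' decomposition via the continuous functional calculus for regular operators; you then still need the compact-times-strongly-null argument (essentially the paper's \autoref{thm:finit_rank_and_approx_unit}) to kill the remainder. Both work; the paper's version is shorter and avoids functional calculus, while yours makes the low/high-frequency intuition explicit and would adapt more readily if one wanted quantitative control on $M$ in terms of the cutoff scale.
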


\begin{proof}
    Fix \(\epsilon>0\) and fix a finite rank operator \(K=\sum_k \ketbra{a_k}{b_k}\) such that \(\norm{T(1+D^*D)^{-\frac{1}{2}} - K}<\frac{1}{\sqrt{2}}\epsilon\) (or to be the precise we consider the extension of \(T(1+D^*D)^{-\frac{1}{2}}\)). By density of \(\Dom D\) we can also assume that \(b_k\in \Dom D\). Then for any \(x\in \Dom D\cap \Dom T\)
    \[
        \begin{split}
            \norm{Tx}
            &= \norm{T(1+D^*D)^{-\frac{1}{2}}(1+D^*D)^{\frac{1}{2}}x} \\
            &\leq \norm{K(1+D^*D)^{\frac{1}{2}}x} + \tfrac{1}{\sqrt{2}}\epsilon\norm{(1+D^*D)^{\frac{1}{2}}x} \\
            &\leq \left(\epsilon + \sum \norm{a_k}\norm{(1+D^*D)^{\frac{1}{2}}b_k}\right)\norm{x} + \epsilon\norm{Dx}. \qedhere
        \end{split}
    \]
\end{proof}

\begin{lemma}
    \label{thm:relativly_compact_pert_domains}
    Let \(D\colon \pE \dashrightarrow \pF\) and \(D'\colon \pE' \dashrightarrow \pF'\) be regular operators between Hilbert \(B\)-modules. Assume that \(U\in \cL(\pE, \pE')\) and \(V\in \cL(\pF, \pF')\) are invertible maps such that 
    \begin{enumerate}
        \item \(U\) maps a core \(\cE\) of \(D\) onto a core of \(D'\);
        \item \((D'U-VD)(1+D^*D)^{-\frac{1}{2}}\) extends to a compact operator in \(\cK(\pE, \pF')\).
    \end{enumerate}
    Then \(U\) restricts to a bounded and invertible element \(U\in \cL(\Dom D,\Dom D')\).

    Furthermore, \(D'\) is regular.
\end{lemma}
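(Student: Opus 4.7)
The plan is to proceed in three stages: extract a quantitative relative bound from \autoref{thm:upper_bound_with_compact_operator}, use it to show that \(U\) restricts to a bounded bijection between the graph-normed domains, and finally handle the regularity of \(D'\) via a conjugation-plus-perturbation argument, after which adjointability follows from \autoref{thm:closed_graph_thm_hilbert_modules}.

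For the first two stages, I would apply \autoref{thm:upper_bound_with_compact_operator} to \(T = D'U - VD\): for every \(\epsilon > 0\) there is \(M > 0\) with \(\norm{(D'U - VD)x} \leq M\norm{x} + \epsilon\norm{Dx}\) on \(\cE\). The triangle inequality then gives \(\norm{D'Ux} \leq M\norm{x} + (\norm{V}+\epsilon)\norm{Dx}\), showing \(U\colon \cE \to \Dom D'\) is bounded in graph norms. Since \(\cE\) is a core of \(D\) and \(D'\) is closed (so that \(\Dom D'\) is complete in its graph norm), the restriction of \(U \in \cL(\pE,\pE')\) to \(\Dom D\) lands in \(\Dom D'\) and is graph-norm bounded. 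Running the same argument with the roles of \(D\) and \(D'\) swapped and the core \(\cE' := U\cE\) of \(D'\): for \(y = Ux\) the estimate rearranges to \(\norm{Dx} \leq \norm{V^{-1}}(\norm{D'y} + M\norm{U^{-1}}\norm{y} + \epsilon\norm{Dx})\), and choosing \(\epsilon < 1/\norm{V^{-1}}\) absorbs the last term, giving \(\norm{U^{-1}y}_D \lesssim \norm{y}_{D'}\). By closedness of \(D\), this extends to a bounded \(U^{-1}\colon \Dom D' \to \Dom D\), so \(U\) is a bounded bijection of Banach \(B\)-modules with bounded inverse.

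For the regularity of \(D'\), I would decompose \(D' = \tilde D + R\) on \(\cE'\), where \(\tilde D := V D U^{-1}\) (with \(\Dom \tilde D = U\Dom D = \Dom D'\) by the previous stage) and \(R := K(1+D^*D)^{1/2}U^{-1}\), with \(K \in \cK(\pE,\pF')\) the compact extension from the hypothesis. The operator \(\tilde D\) is regular by conjugation: one directly computes \(\tilde D^* = (U^*)^{-1} D^* V^*\) on the dense domain \((V^*)^{-1}\Dom D^*\), and verifies that \(1+\tilde D^*\tilde D\) is surjective onto \(\pE'\) by reducing to the bijectivity of the self-adjoint positive bounded-below operator \(U^*U + D^*V^*VD\). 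The perturbation \(R\) extends to a compact operator \(\Dom D' \to \pF'\) as the composition of the bounded \((1+D^*D)^{1/2}U^{-1}\colon \Dom D' \to \pE\) with the compact \(K\), and the identity \(D' = \tilde D + R\) propagates from the core \(\cE'\) to all of \(\Dom D'\) by graph-norm continuity. Then \(R(1+\tilde D^*\tilde D)^{-1/2}\) factors through \(K\) and is therefore compact, so the standard relatively-compact-perturbation theorem for regular operators yields regularity of \(D'\). Finally, with \(D'\) known to be regular, \autoref{thm:closed_graph_thm_hilbert_modules} applied with \((D_1,D_2) = (D,D')\) and with \((D_1,D_2) = (D',D)\) upgrades \(U\) and \(U^{-1}\) to adjointable elements, so \(U \in \cL(\Dom D, \Dom D')\) is invertible.

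The main obstacle is the regularity step. Both the conjugation argument for \(\tilde D\) and the relatively-compact perturbation preserving regularity are known in principle, but tracking the unbounded factor \((1+D^*D)^{1/2}\) through the bounded transform while verifying range density of \(1 + \tilde D^* \tilde D\) in the Hilbert-module setting requires careful bookkeeping. A cleaner alternative would be to bypass \(\tilde D\) entirely and verify regularity of \(D'\) directly by exhibiting a bounded inverse to \(1 + (D')^*D'\) on \(\pE'\), transferring the inverse of \(1 + D^*D\) along \(U\) and correcting for the compact error \(K\) by an Atkinson-type argument.
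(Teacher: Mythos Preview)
Your argument for the two-sided graph-norm estimate and the final appeal to \autoref{thm:closed_graph_thm_hilbert_modules} is essentially the paper's proof: both invoke \autoref{thm:upper_bound_with_compact_operator} to control \(\norm{(D'U-VD)x}\) and absorb the \(\epsilon\norm{Dx}\) term to obtain \(\norm{x}_D \lesssim \norm{Ux}_{D'}\). Your Stage~3 is unnecessary, however: \(D'\) is already assumed regular in the hypotheses, the paper's proof makes no attempt to establish it, and the ``Furthermore'' clause appears to be a redundant leftover in the statement.
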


\begin{proof}
    For \(x\in \cE\)
    \[
        \norm{D'Ux}\leq \norm{(D'U - VD)x} + \norm{V} \norm{Dx}
    \]
    which since \(D'U-VD \colon \Dom D\to \pF'\) is bounded means that
    \[
        \norm{Ux}_{D'}\lesssim \norm{x}_D.
    \]
    On the other hand, using that \(D'U-VD \colon \Dom D\to \pF'\) is compact there is an \(M>0\) such that for any \(x\in \cE\)
    \[
        \begin{split}
        \norm{Dx} 
        &\leq \norm{V^{-1}}\norm{(D'U - VD)x} + \norm{V^{-1}}\norm{D'Ux} \\
        &\leq M \norm{V^{-1}} \norm{U^{-1}} \norm{Ux} + \tfrac{1}{2}\norm{Dx} + \norm{V^{-1}}\norm{D'Ux}
        \end{split}
    \]
    meaning that
    \[
        \norm{x}_D \lesssim \norm{Ux}_{D'}.
    \]
    Using that \(\cE\) is a core of \(D\) and \(U \cE\) is a core of \(D'\), we can  extend \(U\) to a bijective map \(U\colon \Dom D \to \Dom D'\) by continuity and from \autoref{thm:closed_graph_thm_hilbert_modules} we obtain that \(U\) is adjointable.
\end{proof}

Note that \autoref{thm:relativly_compact_pert_domains} is symmetric in the sense that \((D'U-VD)(1+D^*D)^{-\frac{1}{2}}\) extends to a compact operator if and only if \((DU^{-1}-V^{-1}D')(1+D'^*D')^{-\frac{1}{2}}\) does.

\begin{lemma}
    \label{thm:relativly_compact_pert_Fredholm}
    Let \(D, D', U\) and \(V\) be as in \autoref{thm:relativly_compact_pert_domains}, then \(D\) is \(B\)-Fredholm if and only if \(D'\) is, and if so \(\ind_B D=\ind_B D'\).
\end{lemma}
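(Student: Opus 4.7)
The plan is to exploit the identity $D'U = VD + K$ holding on the core $\cE$, where $K$ denotes the restriction of $D'U - VD$. I first argue that $K$ extends to a \(B\)-compact operator from $\Dom D$ to $\pF'$: by hypothesis, $(D'U - VD)(1+D^*D)^{-1/2}$ extends to an element of $\cK(\pE, \pF')$, and since $(1+D^*D)^{1/2}$ is an adjointable isomorphism from the graph Hilbert module $\Dom D$ onto $\pE$, post-composing this compact extension with the isomorphism yields the desired element of $\cK(\Dom D, \pF')$ extending $K$.

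With this observation and the invertibility of $U$ in $\cL(\Dom D, \Dom D')$ supplied by \autoref{thm:relativly_compact_pert_domains}, the rest is a three-step comparison between $D$ (as an element of $\cL(\Dom D, \pF)$) and $D'$ (as an element of $\cL(\Dom D', \pF')$):
\begin{enumerate}
    \item Post-composition with the invertible $V \in \cL(\pF, \pF')$ shows $D$ is left- or right-\(B\)-Fredholm iff $VD \in \cL(\Dom D, \pF')$ is, with $\ind_B(VD) = \ind_B D$.
    \item The identity $VD = D'U - K$ in $\cL(\Dom D, \pF')$ expresses $VD$ as a compact perturbation of $D'U$, so the two have the same Fredholm properties and the same \(B\)-index.
    \item The factorisation $D'U = D' \circ U$ with $U \in \cL(\Dom D, \Dom D')$ invertible shows that $D'U$ is left- or right-\(B\)-Fredholm iff $D'$ is, and $\ind_B(D'U) = \ind_B D'$.
\end{enumerate}
Chaining these three equivalences proves both statements of the lemma.

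The content is routine once the correct framework is in place, so the main obstacle is bookkeeping rather than mathematical depth. In particular, one must be careful that $K$ is \(B\)-compact as an operator out of the graph module $\Dom D$ rather than out of $\pE$, which is precisely why the hypothesis in \autoref{thm:relativly_compact_pert_domains} is stated with the additional factor $(1+D^*D)^{-1/2}$. The index properties invoked in steps (1)--(3), namely invariance of $\ind_B$ under composition with invertibles in $\cL$ and under \(B\)-compact perturbations, are standard for Fredholm operators on Hilbert modules (see \cite[Chapter 4]{Gracia-Bondia_Varilly_Figueroa_2001}).
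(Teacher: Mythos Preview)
Your proof is correct and follows essentially the same approach as the paper: the paper observes that \(V^{-1}D'U - D \in \cK(\Dom D, \pF)\) and concludes directly, which amounts to your three-step chain after multiplying through by \(V^{-1}\) rather than \(V\). Your write-up is simply a more detailed unpacking of the same argument.
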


\begin{proof}
    Since \(V^{-1}D'U - D \in \cK(\Dom D, \pE)\) we have that if \(D\) is Fredholm, the so is \(V^{-1}D'U\) and then also \(D'\). In particular,
    \[
        \ind D = \ind V^{-1}D' U = \ind D'. \qedhere
    \]
\end{proof}

In the context of pseudodifferential calculi, one should see previous lemma as saying that Fredholm index of an elliptic operator in any pseudodifferential calculus is independent of compactly supported lower order perturbations if the pseudodifferential calculus has an analog to Rellich theorem, that is, \(H^s(K)\into L^2(K)\) is compact for any \(s>0\) if \(K\) is compact.

\subsection{Bunke's unitary}
\label{sec:bunkesunitary}

In this section we will take a unitary constructed in \cite{Bunke_1995} and combine it with \autoref{thm:relativly_compact_pert_Fredholm} to prove \autoref{thm:relative_index_for_differential_operators}.

Consider the manifolds \(M_{ij}\) from the diagram \eqref{eq:the_diagram}. Let \(\chi_{V_i} \in C^\infty(V_i)\) and \(\chi_{W_j}\in C^\infty(W_j)\) for \(i,j=1,2\) be positive functions such that
\[
    \chi_{V_i}^2 + \chi_{W_j}^2 = 1 \text{ in } C^\infty(M_{ij}).
\]
Note that to construct such functions one only needs to choose one, say \(\chi_{V_1}\in C^\infty(M_{11})\) such that \(\chi_{V_1} = 0\) on \(M_{11}/V_1\) and \(\chi_{V_1} = 1\) on \(M_{11}/W_1\). In particular, \(\chi_{V_1} = \chi_{V_2}\) on \(U\) and similarly for \(\chi_{W_i}\). In what follows, we will also see \(\chi_{V_i}\) and \(\chi_{W_i}\) as positive contractive maps in the diagram
\[
    \begin{tikzcd}
        L^2(M_{11}) \ar{d}{\chi_{W_1}} \ar{r}{\chi_{V_1}} & L^2(M_{12}) \\
        L^2(M_{21}) & L^2(M_{22}) \ar{u}{\chi_{W_2}} \ar{l}{\chi_{V_2}}
    \end{tikzcd}
\]

\begin{lemma}
    \label{thm:bunkes_unitary}
    The map
    \[
        \pmat{
            \chi_{V_1} & -\chi_{W_2} \\
            \chi_{W_1} & \chi_{V_2}
        }
        \colon L^2(M_{11})\oplus L^2(M_{22}) \to L^2(M_{12})\oplus L^2(M_{21})
    \]
    is a unitary.
\end{lemma}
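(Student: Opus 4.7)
The plan is to verify $U^*U = \id$ and $UU^* = \id$ by direct matrix computation, using the two key properties of the cutoffs: the partition-of-unity identity $\chi_{V_i}^2 + \chi_{W_j}^2 = 1$, and the fact that on the common overlap $U$ we have $\chi_{V_1} = \chi_{V_2}$ and $\chi_{W_1} = \chi_{W_2}$.

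First I would make the maps concrete. Each arrow in the square diagram is multiplication by a cutoff whose support lies in a set $V_i$ or $W_j$ embedded in both source and target manifolds; thus the map acts by restricting to that common open set, multiplying by the cutoff, and extending by zero. In particular the adjoints are of the same form: for instance $\chi_{V_1}^* \colon L^2(M_{12}) \to L^2(M_{11})$ is multiplication by $\chi_{V_1}$ followed by extension by zero from $V_1$ to $M_{11}$. With this description in hand, each composition $\chi_{V_i}^* \chi_{W_j}$ or $\chi_{W_j}^* \chi_{V_i}$ is multiplication by $\chi_{V_i}\chi_{W_j}$ on $U = V_i \cap W_j$, extended by zero.

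Next I would compute the four entries of
\[
    U^*U = \pmat{\chi_{V_1}^*\chi_{V_1} + \chi_{W_1}^*\chi_{W_1} & -\chi_{V_1}^*\chi_{W_2} + \chi_{W_1}^*\chi_{V_2} \\ -\chi_{W_2}^*\chi_{V_1} + \chi_{V_2}^*\chi_{W_1} & \chi_{W_2}^*\chi_{W_2} + \chi_{V_2}^*\chi_{V_2}}.
\]
The diagonal entries become multiplication by $\chi_{V_i}^2 + \chi_{W_i}^2 = 1$ on $M_{ii}$, giving the identity on each summand. For the top-right off-diagonal, both $\chi_{V_1}^*\chi_{W_2}$ and $\chi_{W_1}^*\chi_{V_2}$ are multiplication by the same function on $U$ (since $\chi_{V_1}\chi_{W_2}$ and $\chi_{W_1}\chi_{V_2}$ restrict to the identical function $\chi_{V_1}\chi_{W_1} = \chi_{V_2}\chi_{W_2}$ on $U$), so they cancel. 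The other off-diagonal is handled symmetrically (or follows by taking adjoints). The verification $UU^* = \id$ proceeds identically after swapping roles; the same two identities provide exactly what is needed.

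The only real subtlety is bookkeeping: making sure that every product such as $\chi_{V_1}^*\chi_{W_2}$ really is supported on $U$ and that the identification of $\chi_{V_1}$ on $M_{11}$ with $\chi_{V_2}$ on $M_{22}$ (via the common chart on $U$) is used correctly. I would handle this by stating once that multiplication by a function supported in $V_i$ (respectively $W_j$) is canonically defined as a map between any two of the $L^2(M_{ij})$'s via restriction-and-extension-by-zero, after which the matrix computation is purely formal.
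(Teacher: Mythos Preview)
Your proposal is correct and follows essentially the same approach as the paper: a direct matrix computation of $U^*U$ (and $UU^*$) using the partition-of-unity identity and the fact that $\chi_{V_1}\chi_{W_2} = \chi_{W_1}\chi_{V_2}$ on $U$. You are in fact a bit more careful than the paper, which only writes out the $U^*U$ computation and leaves the bookkeeping about supports and adjoints implicit.
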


\begin{proof}
    To see that the map is unitary, we calculate that
    \[
        \pmat{
            \chi_{V_1} & -\chi_{W_2} \\
            \chi_{W_1} & \chi_{V_2}
        }^* \pmat{
            \chi_{V_1} & -\chi_{W_2} \\
            \chi_{W_1} & \chi_{V_2}
        } = \pmat{
            \chi_{V_1}^2 + \chi_{W_1}^2
            & -\chi_{V_1}\chi_{W_2} + \chi_{W_1}\chi_{V_2} \\
            -\chi_{W_2}\chi_{V_1} + \chi_{V_2}\chi_{W_1}
            & \chi_{W_2}^2 + \chi_{V_2}^2 
        }
    \]
    where the off-diagonal elements are zero since \(\chi_{V_1}\chi_{W_2}\) has support on \(U\) and \(\chi_{V_1}\chi_{W_2} = \chi_{V_2}\chi_{W_1}\) on \(U\), and the diagonal elements are one by construction.
\end{proof}

\begin{proof}[Proof of \autoref{thm:relative_index_for_differential_operators}]
    Recall the setup presented before \autoref{thm:relative_index_for_differential_operators}. Consider the unitary from \autoref{thm:bunkes_unitary} both as a map
    \[
        \label{eq:invertible_in_relative_index_proof}
        \pmat{
            \chi_{V_1} & -\chi_{W_2} \\
            \chi_{W_1} & \chi_{V_2}
        }
        \colon L^2(M_{11};E_{11})\oplus L^2(M_{22};E_{22}) \to L^2(M_{12};E_{12})\oplus L^2(M_{21};E_{21})
    \]
    and the analogous map for the \(F_{ij}\) bundles. Note that the map \eqref{eq:invertible_in_relative_index_proof} maps \(\cE_{11}\oplus \cE_{22}\) onto \(\cE_{12}\oplus \cE_{21}\) by assumption. With these invertible maps we will show that \(\pmat{D_{12} & \\ & D_{21}}\) is a relatively compact perturbation of \(\pmat{D_{11} & \\ & D_{22}}\) and apply \autoref{thm:relativly_compact_pert_Fredholm} to obtain our result. Acting on \(\cE_{11}\oplus \cE_{22}\)
    \[
        \begin{split}
        & \pmat{D_{12} & \\ & D_{21}}
        \pmat{
            \chi_{V_1} & -\chi_{W_2} \\
            \chi_{W_1} & \chi_{V_2}
        } - \pmat{
            \chi_{V_1} & -\chi_{W_2} \\
            \chi_{W_1} & \chi_{V_2}
        } \pmat{D_{11} & \\ & D_{22}} \\
        &= \pmat{
            D_{12} \chi_{V_1} - \chi_{V_1} D_{11}  & -D_{12} \chi_{W_2} + \chi_{W_2} D_{22} \\
            D_{21} \chi_{W_1} - \chi_{W_1} D_{11} & D_{21}\chi_{V_2} - \chi_{V_2} D_{22}
        } \\
        &=
        \pmat{
            [D_{11}, \chi_{V_1}] & -[D_{22}, \chi_{W_2}] \\
            [D_{11}, \chi_{W_1}] & [D_{22}, \chi_{V_2}]
        } + \pmat{
            (D_{12} - D_{11}) \chi_{V_1} & -(D_{12} - D_{22}) \chi_{W_2} \\
            (D_{21} - D_{11})\chi_{W_1} & (D_{21} - D_{22})\chi_{V_2}
        }
        \end{split}
    \]
    which we claim extends to a compact operator from \(\Dom D_{11}\oplus \Dom D_{22}\).  
    
    The operators of the form \([D_{ij}, \chi]\) have support in \(U\) since all \(\chi\):s are constant outside \(U\), and then we can utilize the trick that if \(a\in C_c^\infty(U)\) such that \(a = 1\) where \(\chi\) is not constant, then 
    \[
        [D_{ij}, \chi] = [D_{ij}, \chi]a = [D_{ij}, \chi a] - \chi[D_{ij}, a]
    \]
    which extends to a relatively compact operator by assumption. The operators of the form \((D_{ij} - D_{lk}) \chi\) are zero by assumption.

\end{proof}

\section{A characterization of Fredholmness}
\label{sec:characterization_of_Fred}

In \cite{Bunke_1995} the Dirac operators \(D\) looked at were assumed to be \emph{invertible at infinity} in the sense that there is an \(f\in C_c^\infty(M)\) such that \(f + D^2\) is invertible (with bounded inverse). Invertibility at infinity ensures that the operator is Fredholm, but we will show that the reverse is also true for operators satisfying a weak regularity assumption, making invertibility at infinity a characterization of Fredholmness. We also look at the notion of \emph{coercive at infinity} found in \cite[Definition 8.2]{Ballmann_Bar_2012} and \cite[Definition 3.1]{Bandara_2022}, which is equivalent if additional assumptions are made.

We will start of in the abstract showing that an analog to the \(f\) always exists for left-\(B\)-Fredholm operators on a right \(B\)-module. Then we will show that this element can be replaced by an element in any approximate unit, which in the geometric setting can be chosen to be in \(C_c^\infty(M)\).

\begin{proposition}
    \label{thm:characterization_of_Fredholm_abstract}
    For an unbounded regular operator \(D\colon \pE\dashrightarrow \pF\) between Hilbert \(B\)-modules the following are equivalent:
    \begin{enumerate}
        \item \(D\) is left-\(B\)-Fredholm;
        \item There is a self-adjoint \(K\in \cL(\pE)\cap\cK(\Dom D, \pE)\) such that \(K + D^*D\) is invertible;
        \item There is a \(K\in \cL(\pE, \pG)\cap \cK(\Dom D, \pG)\) for some Hilbert \(B\)-module \(\pG\) such that
        \[
            \norm{x}\lesssim \norm{Kx} + \norm{Dx}
        \]
        for all \(x\) in a core of \(D\);
        \item \label{enum:weylsequence} There exists no so-called \emph{Weyl sequence} for \(D\), that is, there is no sequence \(\set{x_n}\subseteq \Dom D\) such that \(\norm{x_n}=1, \norm{Dx_n}\to 0\) and \(\braket{y,x_n}\to 0\) for all \(y\in \pE\).        
    \end{enumerate}
\end{proposition}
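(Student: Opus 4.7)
The plan is to establish the cycle $(1) \Rightarrow (2) \Rightarrow (3) \Rightarrow (1)$ and then the equivalence $(1) \Leftrightarrow (4)$ via Weyl sequences. The central technical device throughout is the bounded transform $T = D(1+D^*D)^{-1/2} \in \cL(\pE,\pF)$: $D$ is left-$B$-Fredholm if and only if $T$ is, and $T^*T = 1 - (1+D^*D)^{-1}$ is a bounded positive self-adjoint element of $\cL(\pE)$.

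For $(1) \Rightarrow (2)$, I would apply continuous functional calculus. Fredholmness of $T^*T$ yields $\sigma_{\mathrm{ess}}(T^*T) \subseteq [\delta, 1]$ for some $\delta > 0$, so choosing a bounded continuous $f \colon [0,\infty) \to \bR$ with $f \geq 0$, $f(0) > 0$ and support inside $[0, \delta/(1-\delta))$, and setting $K = f(D^*D) = \tilde f(T^*T)$ with $\tilde f(s) = f(s/(1-s))$, one sees $\tilde f$ vanishes on $\sigma_{\mathrm{ess}}(T^*T)$, so $K$ is self-adjoint and compact on $\pE$ (hence in $\cK(\Dom D, \pE)$); meanwhile $K + D^*D = (f + \mathrm{id})(D^*D)$ is invertible because $f + \mathrm{id}$ is bounded below by a positive constant with bounded reciprocal on $[0,\infty)$. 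For $(2) \Rightarrow (3)$, set $R = (K + D^*D)^{-1}$; then $DR \in \cL(\pE,\pF)$ is bounded and its adjoint extends $RD^*$ to a bounded operator $\pF \to \pE$, so the identity $x = R(K+D^*D)x = RKx + (RD^*)(Dx)$ on the core $\Dom D^*D$ gives $\|x\| \leq \|R\|\|Kx\| + \|DR\|\|Dx\|$, which extends to any core of $D$ by density. For $(3) \Rightarrow (1)$, equip $\Dom D$ with its graph inner product and consider $T_0 = \binom{K}{D}\colon \Dom D \to \pG \oplus \pF$, which is bounded adjointable (both entries are) and bounded below by (3); hence $T_0^*T_0$ is invertible in $\cL(\Dom D)$, and $(T_0^*T_0)^{-1} T_0^* = (S_1, S_2)$ satisfies $S_1 K + S_2 D = 1_{\Dom D}$. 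Since $K \in \cK(\Dom D, \pG)$, the composition $S_1 K$ lies in $\cK(\Dom D)$, so $S_2$ is a parametrix for $D$.

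For $(1) \Rightarrow (4)$, given a parametrix $SD = 1_{\Dom D} - C$ with $C \in \cK(\Dom D)$, any Weyl sequence $\{x_n\}$ is weakly null in $\Dom D$ with its graph inner product: the $\pE$-part of the inner product vanishes by weak nullity in $\pE$, and the $\pF$-part by Cauchy--Schwarz since $\|Dx_n\| \to 0$. Hence $\|Cx_n\| \to 0$ by compactness, and combined with $\|SDx_n\| \to 0$ this forces $\|x_n\| \to 0$, contradicting $\|x_n\| = 1$. Conversely, suppose $D$ is not left-$B$-Fredholm, so $T^*T$ is not Fredholm in $\cL(\pE)$; given a Weyl sequence $\{x_n\}$ for $T^*T$ at $0$, set $u_n = (1+D^*D)^{-1} x_n = x_n - T^*T x_n$. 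A direct computation yields $\|u_n\| \to 1$, $\langle Du_n, Du_n\rangle = \langle u_n, x_n\rangle - \langle u_n, u_n\rangle \to 0$ in $B$, and $\langle y, u_n\rangle = \langle (1+D^*D)^{-1} y, x_n\rangle \to 0$ for all $y \in \pE$, so $u_n/\|u_n\|$ is a Weyl sequence for $D$.

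The main obstacle is precisely the last step: producing a Weyl sequence for a bounded positive self-adjoint operator on a Hilbert $B$-module when $0$ lies in its essential spectrum. The classical spectral-projection construction does not transfer directly, since spectral projections of $T^*T$ onto intervals $[0,\epsilon)$ need not lie in $\cL(\pE)$ for general $B$. A workable substitute is continuous functional calculus: non-Fredholmness forces $f_\epsilon(T^*T)$ to be non-compact for any $f_\epsilon \in C([0,\infty))$ with $f_\epsilon(0) > 0$ supported in $[0,\epsilon]$, and from the ranges of such operators one iteratively selects unit vectors that are small in the $B$-valued inner product against previously chosen ones while being approximately annihilated by $T^*T$.
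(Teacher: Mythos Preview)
Your cycle $(1)\Rightarrow(2)\Rightarrow(3)\Rightarrow(1)$ is correct and proceeds somewhat differently from the paper. For $(1)\Rightarrow(2)$ you invoke functional calculus on the bounded transform: left-$B$-Fredholmness of $T$ forces $[T]^*[T]$ to be invertible in the Calkin algebra (in any $C^*$-algebra $ba=1$ gives $1=a^*b^*ba\le\|b\|^2a^*a$), so the essential spectrum of $T^*T$ avoids a neighbourhood of $0$ and any bump $f$ supported below that gap makes $K=f(D^*D)$ compact with $K+D^*D$ invertible. The paper instead builds $K$ by hand as a $B$-finite-rank operator, choosing a parametrix with $1-SD$ in the dense left ideal of $\cK(\Dom D)$ generated by $\ketbra{l}{r}_D$ with $r\in\Dom D^*D$; this yields the estimate (3) directly, and (2) via $K^*K+D^*D$. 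Your $(3)\Rightarrow(1)$ through invertibility of $T_0^*T_0$ is valid, but in the Hilbert-module setting the step ``bounded below $\Rightarrow$ $T_0^*T_0$ invertible'' deserves one line: if $\sigma(T_0^*T_0)$ met $[0,c^2)$, applying a continuous bump $g$ supported there and taking $x$ in the range of $g(T_0^*T_0)^{1/2}$ would give $\|T_0x\|^2=\|\langle x,T_0^*T_0x\rangle\|<c^2\|x\|^2$.

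The genuine gap is in $\neg(1)\Rightarrow\neg(4)$. Your reduction to a Weyl sequence for $T^*T$ at $0$ and the push-forward through $(1+D^*D)^{-1}$ are fine, and it is correct that $f_\epsilon(T^*T)$ is non-compact whenever $T^*T$ fails to be Fredholm. But the construction you sketch---iteratively selecting unit vectors in the ranges of these operators that are small against \emph{previously chosen ones}---produces at best an approximately orthonormal sequence, and in a Hilbert $B$-module such a sequence need not be weakly null: there is no Bessel-type inequality forcing $\langle y,x_n\rangle\to 0$ for \emph{arbitrary} $y\in\pE$. The paper's remedy is to bring in a self-adjoint approximate unit $\{K_n\}$ of $\cK(\pE)$. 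The negation of (3), applied with $K=K_n$ and $\epsilon=1/n$, yields $x_n\in\Dom D$ with $\|x_n\|=1$, $\|Dx_n\|<1/n$ and $\|K_nx_n\|<1/n$; weak nullity then follows from the rank-one estimate
\[
\|\langle y,x_n\rangle\|^2=\|\langle \ketbra{y}{y}x_n,x_n\rangle\|\le\bigl\|\ketbra{y}{y}-K_n\ketbra{y}{y}\bigr\|+\|y\|\,\|\langle y,x_n\rangle\|\,\|K_nx_n\|,
\]
since $\ketbra{y}{y}\in\cK(\pE)$ and $K_n$ is an approximate unit. You can graft this same device onto your functional-calculus picture (choose $x_n$ in the range of $f_{1/n}(T^*T)$ \emph{and} with $\|K_nx_n\|$ small, which is possible precisely because $f_{1/n}(T^*T)$ is non-compact), but as written the iterative selection does not secure the weak convergence.
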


\begin{proof}

    To show that these statements are equivalent we will prove the implications \(1\Rightarrow 2\Rightarrow 3\Rightarrow 1\) and \(1 \Leftrightarrow 4\).

    Towards showing that \(1\Rightarrow 2\) and \(1\Rightarrow 4\), assume that \(D\) is left-\(B\)-Fredholm. Note that span of \(B\)-finite rank operators of the form \(\sum_k^n \ketbra{l_k}{r_k}_D\) where \(l_k\in \Dom D\) and \(r_k\in \Dom D^*D\) is a dense left-ideal \(J\ideal\cK(\Dom D)\) since \(\Dom D^*D\) is a core of \(\Dom D\). Here we use the notation \(\ketbra{x}{y}_D\) for the operator \(\ketbra{x}{y}_D z = x(\braket{y, z}_{\pE} + \braket{Dy,Dz}_{\pE})\). In particular, \(D\) is left-invertible modulo \(J\) following the proof of \cite[Lemma 4.4]{Gracia-Bondia_Varilly_Figueroa_2001}. That is, if \(S\in \cL(\pE,\Dom D)\) is such that \(1-SD\in \cK(\Dom D)\), then there is a \(j\in J\) such that \(\norm{1-SD-j}_{\Dom D}<1\). Then \(SD+j\in \cK(\Dom D)\) is invertible and \(S'=(SD+j)^{-1}S\in J\) satisfies 
    \[
        1 - S'D = 1 - (SD + j)^{-1}SD = (SD + j)^{-1}j \in J.
    \]
    Now, fix an \(S\in \cL(\pE, \Dom D)\) and \(K=\sum_k^n \ketbra{l_k}{r_k}_D\) such that \(1-SD = K\) and \(r_k\in \Dom D^*D\). Then \(K = \sum_k^n \ketbra{l_k}{(1+D^*D)r_k}\) in the usual \emph{ketbra}-notation from \(\cL(\pE)\), meaning that \(K\) extends to a \(B\)-finite rank operator in \(\cL(\pE)\) as well. In particular, \(K\in \cK(\pE)\cap \cK(\Dom D)\subseteq \cL(\pE)\cap\cK(\Dom D, \pE)\). Finally, for any \(x\in \Dom D\) we obtain that
    \[
        \norm{x} = \norm{(K+SD)x}\leq \norm{Kx} + \norm{S}\norm{Dx}
    \]
    where we use \(S\in \cL(\pE,\Dom D)\subseteq \cL(\pE)\). Also,
    \[
        \norm{x}\leq \norm{Kx} + \norm{S}\norm{Dx} \leq \sum_k^n\norm{l_k}\norm{\braket{(1+D^*D)r_k, x}} + \norm{S}\norm{Dx}
    \]
    which contradicts existence of a Weyl sequence.

    To show that \(2\Rightarrow 3\), assume there is a \(K\in \cL(\pE,\pG)\cap \cK(\Dom D, \pG)\) such that \(\norm{x}\lesssim \norm{Kx} + \norm{Dx}\). Then \(K^*K + D^*D\) is invertible since it is bounded from below and dense domain \(\Dom D^*D\). Here \(K^*\) is the adjoint in \(\cL(\pE, \pG)\), and we note that \(K^*K\in \cL(\pE)\cap\cK(\Dom D, \pE)\).

    To show that \(3\Rightarrow 1\), assume there is a \(K\in \cL(\pE)\cap \cK(\Dom D, \pE)\) such that \(K+D^*D\) is invertible. Let \(S = (D(K+D^*D)^{-1})^*\) where the adjoint is taken as an operator in \(\cL(\pE,\pF)\), then on \(\Dom D^*D\), which is dense in \(\Dom D\), we have that
    \[
        SD = (K+D^*D)^{-1}D^*D = 1 - (K+D^*D)^{-1}K.
    \]
    Since \((K+D^*D)^{-1}K\in \cK(\Dom D)\) we have shown that \(D\) is left-\(B\)-Fredholm.

    To show that \(3\Rightarrow 1\) we shall instead assume that for all \(K\in \cL(\pE, \pG)\cap \cK(\Dom D, \pG)\) and all \(\epsilon>0\) there is an \(x\in \Dom D\) such that \(\epsilon \norm{x}\geq \norm{Kx}+\norm{Dx}\) and construct a Weyl sequence. Let \(K_n\in \cK(\pE)\) be a self-adjoint approximate unit for the \Cst-algebra \(\cK(\pE)\), that is, \(K_n a \to a\) for any \(a\in \cK(\pE)\). Note that \(\cK(\pE)\subseteq \cL(\pE)\cap \cK(\Dom D, \pE)\), so by assumption we can for each \(n=1,2,\dots\) choose an \(x_n\in \Dom D\) such that \(\norm{x_n}=1, \norm{Dx_n} < \frac{1}{n}\) and \(\norm{K_n x_n} < \frac{1}{n}\). To show that \(\set{x_n}\) is a Weyl sequence, fix a \(y\in \pE\), then
    \[
        \begin{split}
            \norm{\braket{y, x_n}}^2 
            &= \norm{\braket{y\braket{y,x_n}, x_n}} \\
            &\leq\norm{\braket{(\ketbra{y}{y} - K_n \ketbra{y}{y}) x_n, x_n}} + \norm{\braket{K_n y \braket{y, x_n}, x_n}} \\
            &\leq \norm{\ketbra{y}{y} - K_n \ketbra{y}{y}} + \norm{y}~\norm{\braket{y, x_n}}~\norm{K_n x_n}
        \end{split}
    \]
    and hence
    \[
        \begin{split}
            \norm{\braket{y, x_n}}
            &\leq \frac{1}{2}\norm{y}~\norm{K_n x_n} + \sqrt{\norm{\ketbra{y}{y} - K_n \ketbra{y}{y}} + \frac{1}{4}\norm{y}^2~\norm{K_n x_n}^2}
        \end{split}
    \]
    which goes to zero with \(n\).
\end{proof}

In the case of Banach spaces a related statement to parts of \autoref{thm:characterization_of_Fredholm_abstract} can be found in \cite[Proposition A.3]{Ballmann_Bar_2012}. Note also that for a Hilbert space, a natural choice of \(K\) would be \(K=P_{\ker D}\). This does not work as well for Hilbert modules as then Fredholmness does not necessarily mean isolated spectrum at zero and \(\ker D\) might not be finitely generated.

The criterion regarding Weyl sequences in \autoref{thm:characterization_of_Fredholm_abstract} is only included as a curiosity and will not be used in the paper. It relates to Weyl's criterion \cite[Theorem 25.57]{Gustafson_Sigal_2020} which says that a \(\lambda\in \bR\) is in the essential spectrum of a self-adjoint operator \(D\) if and only if \(D-\lambda\) admits a Weyl sequence. \autoref{thm:characterization_of_Fredholm_abstract} hence extends this classical result from Hilbert spaces to Hilbert modules.

For later use in relation to truly unbounded Kasparov modules, we introduce the notation
\begin{equation}
    \label{eq:compinv_def}
    \begin{split}
        \CompInv(D) \coloneq \{\, k \in \cL(\pE) \,:\,~ 
        & k \Dom D \subseteq \Dom D,~k(1+D^*D)^{-\frac{1}{2}} \in \cK(\pE) \\
        & k \text{ is positive and } k + D^*D \text{ is invertible}  \,\}
    \end{split}
\end{equation}
for an unbounded regular operator \(D\colon \pE\dashrightarrow \pF\). Note that saying that \(1\in \CompInv(D)\) is the same as \(D\) having compact resolvent.

\begin{corollary}
    \label{thm:compinv}
    Let \(D\colon \pE\dashrightarrow \pF\) be an unbounded regular operator, then \(D\) is left-\(B\)-Fredholm if and only if \(\CompInv(D)\) is non-empty.
\end{corollary}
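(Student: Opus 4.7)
The plan is to derive this corollary directly from Theorem~\ref{thm:characterization_of_Fredholm_abstract}, with a small amount of bookkeeping to match the four conditions in the definition~\eqref{eq:compinv_def} of $\CompInv(D)$. For the ``if'' direction, suppose $k \in \CompInv(D)$. Positivity makes $k$ self-adjoint. Since $(1+D^*D)^{1/2}\colon \Dom D \to \pE$ is an isometric isomorphism for the graph norm on $\Dom D$, the factorization $k = [k(1+D^*D)^{-1/2}]\circ (1+D^*D)^{1/2}$ exhibits $k$ as an element of $\cK(\Dom D, \pE)$. Combined with invertibility of $k + D^*D$, this is precisely condition~(2) of Theorem~\ref{thm:characterization_of_Fredholm_abstract}, so $D$ is left-$B$-Fredholm.

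For the ``only if'' direction I would invoke condition~(3) of Theorem~\ref{thm:characterization_of_Fredholm_abstract}, which supplies a Hilbert $B$-module $\pG$ and an operator $K' \in \cL(\pE, \pG) \cap \cK(\Dom D, \pG)$ satisfying $\norm{x} \lesssim \norm{K'x} + \norm{Dx}$ on a core of $D$. The natural candidate is $k \coloneq K'^*K'$, which is manifestly positive and bounded on $\pE$; squaring the estimate shows $k + D^*D \geq c > 0$ for some $c$, hence invertible; and $k(1+D^*D)^{-1/2} = K'^* \circ [K'(1+D^*D)^{-1/2}]$ is compact because the bracketed factor is. Three of the four conditions for $\CompInv(D)$ hold with no further work.

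The only obstacle, and the main technical step, is the remaining requirement $k\Dom D \subseteq \Dom D$: for a generic $K'$ the range of $K'^*$ need not lie in $\Dom D$. I would resolve this by replacing $K'$ with a finite-rank approximant of the form $\tilde{K} = \sum_j \ketbra{g_j}{(1+D^*D)h_j}$, with each $h_j$ taken in the smaller dense subspace $\Dom((D^*D)^2) \subseteq \pE$, so that $(1+D^*D)h_j \in \Dom D$ and consequently $\tilde{K}^*\tilde{K}$ has range in $\mathrm{span}\{(1+D^*D)h_j\} \subseteq \Dom D$. That such approximants are dense in $\cK(\Dom D, \pG)$ uses regularity of $D$, in particular density of the smooth domains $\Dom((D^*D)^n)$ and the fact that $1+D^*D$ intertwines the graph inner product on $\Dom D^*D$ with the $\pE$ inner product. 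Choosing the approximation fine enough in $\cK(\Dom D, \pG)$ preserves the estimate $\norm{x} \lesssim \norm{\tilde{K}x} + \norm{Dx}$ after absorbing a small $\epsilon(\norm{x} + \norm{Dx})$ error, and then $k \coloneq \tilde{K}^*\tilde{K}$ lies in $\CompInv(D)$.
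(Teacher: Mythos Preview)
Your proof is correct and follows essentially the same approach as the paper: the key technical point---arranging for the finite-rank operator to have bra vectors of the form $(1+D^*D)h_j$ with $h_j\in\Dom((D^*D)^2)$ so that the adjoint (and hence $K^*K$) lands in $\Dom D$---is exactly what the paper does. The only organizational difference is that the paper re-enters the proof of Proposition~\ref{thm:characterization_of_Fredholm_abstract} and modifies the construction of $K=1-SD$ directly (requiring $r_k\in\Dom(D^*D)^2$ from the start), whereas you invoke condition~(3) as a black box and then approximate afterwards; both routes yield the same $k=K^*K$.
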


\begin{proof}
    If \(D\) is left-\(B\)-Fredholm then we can modify procedure in the proof of \autoref{thm:characterization_of_Fredholm_abstract} and further requiring \(r_i\in \Dom (D^*D)^2\) instead of only \(\Dom D^*D\) which also is a core of \(\Dom D\). This ensures that \(K=\sum_k^n \ketbra{l_k}{(1+D^*D)r_k}\) for \(l_k,r_k\in \Dom D\) such that \((1+D^*D)r_k\in \Dom D\) and there is an \(S\in \cL(\pE,\Dom D)\) such that \(K = 1-SD\). In particular, \(K^*K \in \CompInv(D)\). The reverse implication is a direct consequence of \autoref{thm:characterization_of_Fredholm_abstract}.
\end{proof}

In the geometric case of an operator on \(L^2\)-sections as in \autoref{thm:characterization_of_Fredholm_geometric} and \ref{thm:characterization_of_Fredholm_geometric_extended}, we want to choose the \(K\) in \autoref{thm:characterization_of_Fredholm_abstract} to be an element of \(C_c^\infty(M)\). To accomplish this we will use strict approximate units. A strict approximate unit \(\set{\chi_n}\subseteq \cL(\pE)\) is a sequence strictly converging to the identity in the sense that
\[\norm{x-\chi_n x}\to 0 \text{ for any } x\in \pE.\] 
In particular, \(C_c^\infty(M)\) acting on \(L^2\)-sections contains strict approximate units. Note that a strict approximate unit is a weaker condition than an approximate unit, meaning a sequence \(\set{\chi_n'}\subseteq A\) in a \Cst-algebra \(A\) such that \(\norm{a-\chi_n'a}\to 0\) for any \(a\in A\), and the word strict is often replaced with strong when talking about Hilbert spaces instead of Hilbert modules.

\begin{lemma}
    \label{thm:finit_rank_and_approx_unit}
    Let \(K\in \cL(\pE)\) be a \(B\)-finite rank operator and \(\set{\chi_n} \subseteq \cL(\pE)\) a self-adjoint strict approximate unit. Then for any \(\epsilon>0\) there is an \(N\) such that \(\norm{K(1-\chi_n)} \leq \epsilon\) for all \(n\geq N\).
\end{lemma}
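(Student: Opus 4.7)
The plan is to exploit the finite-rank structure of $K$ together with self-adjointness of the $\chi_n$'s to reduce the statement to the defining property of a strict approximate unit applied to finitely many fixed vectors.

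First I would write $K$ explicitly as a $B$-finite rank operator: there exist $a_1,\dots,a_m, b_1,\dots,b_m \in \pE$ with $K = \sum_{k=1}^m \ketbra{a_k}{b_k}$. Applying $K(1-\chi_n)$ to $x\in \pE$ gives
\[
    K(1-\chi_n)x \;=\; \sum_{k=1}^m a_k \braket{b_k,(1-\chi_n)x} \;=\; \sum_{k=1}^m a_k \braket{(1-\chi_n)b_k,x},
\]
where in the last step I use that $\chi_n$ (hence $1-\chi_n$) is self-adjoint. Thus $K(1-\chi_n) = \sum_k \ketbra{a_k}{(1-\chi_n)b_k}$, and by the triangle inequality together with the standard norm bound $\norm{\ketbra{u}{v}} \leq \norm{u}\norm{v}$,
\[
    \norm{K(1-\chi_n)} \;\leq\; \sum_{k=1}^m \norm{a_k}\,\norm{(1-\chi_n)b_k}.
\]

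Since $\set{\chi_n}$ is a strict approximate unit, $\norm{(1-\chi_n)b_k}\to 0$ as $n\to\infty$ for each of the finitely many $b_k$. Given $\epsilon>0$, choose $N$ so large that $\norm{(1-\chi_n)b_k} \leq \epsilon/(m \max_j \norm{a_j} + 1)$ for all $k$ and all $n\geq N$, and we are done. There is no genuine obstacle here; the only subtlety worth noting is that strictness (as opposed to norm convergence $\chi_n\to 1$, which would fail unless $1\in\cK(\pE)$) is exactly what is needed to handle each vector $b_k$ individually, and the finiteness of the sum is what lets us upgrade pointwise-in-$x$ convergence to a uniform operator norm bound.
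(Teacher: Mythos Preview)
Your proof is correct and essentially identical to the paper's own argument: both write $K=\sum_k \ketbra{l_k}{r_k}$, move $(1-\chi_n)$ onto the bra-vector using self-adjointness, and bound the resulting finite sum using the strict convergence $\norm{(1-\chi_n)r_k}\to 0$.
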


\begin{proof}
    Fix an \(\epsilon > 0\). Let \(l_k, r_k\in \pE\) be such that \(K=\sum_k^n \ketbra{l_k}{r_k}\) and fix an \(N\) such that \(\norm{(1-\chi_n) r_k} < \epsilon (\sum_k^n \norm{l_k})^{-1}\) for all \(j=1,\dots,n\) and \(n\geq N\), then for any \(x\in \pE\)
    \[
        \begin{split}
            \norm{K(1-\chi_n)x}
            &= \norm*{\sum_k^n l_k \braket{r_k, (1-\chi_n)x}} = \norm*{\sum_k^n l_k \braket{(1-\chi_n)r_k, x}} \\
            &\leq \sum_k^n \norm{l_k} \norm{(1-\chi_n)r_k} \norm{x} < \epsilon \norm{x}. \qedhere
        \end{split}
    \]
\end{proof}

\begin{proposition}
    \label{thm:approx_unit_iff_Fred}
    Let \(D\colon \pE\dashrightarrow \pF\) be an unbounded regular operator and \(\set{\chi_n} \subseteq \cL(\pE)\) a self-adjoint strictly approximate unit such that \(\chi_n(1+D^*D)^{-\frac{1}{2}}\in \cK(\pE)\) for all \(n\), then \(D\) is left-\(B\)-Fredholm if and only if there is an \(n\) such that
    \[
        \norm{x} \lesssim \norm{\chi_n x} + \norm{Dx}.
    \]
\end{proposition}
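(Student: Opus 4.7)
The plan is to prove the two directions of the equivalence separately, both relying on \autoref{thm:characterization_of_Fredholm_abstract}. The sufficiency direction is essentially an application of condition (3) of that proposition, while the necessity direction extracts a finite rank witness of left-$B$-Fredholmness from the same proof and then approximates it by $K\chi_n$ using \autoref{thm:finit_rank_and_approx_unit}.

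For sufficiency, suppose the inequality $\norm{x}\lesssim \norm{\chi_n x}+\norm{Dx}$ holds for some $n$. I would take $K=\chi_n$ and $\pG=\pE$ in condition (3) of \autoref{thm:characterization_of_Fredholm_abstract}. The only nontrivial point is to verify that $\chi_n$, viewed as a map $\Dom D\to \pE$, is $B$-compact. This follows by writing
\[
\chi_n|_{\Dom D\to\pE}=\bigl(\chi_n(1+D^*D)^{-\frac{1}{2}}\bigr)\circ (1+D^*D)^{\frac{1}{2}},
\]
where the right factor is the isometric isomorphism from $\Dom D$ with its graph inner product onto $\pE$, and the left factor lies in $\cK(\pE)$ by hypothesis. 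The conclusion of \autoref{thm:characterization_of_Fredholm_abstract} then gives that $D$ is left-$B$-Fredholm.

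For necessity, assume $D$ is left-$B$-Fredholm. The proof of the implication $1\Rightarrow 2$ in \autoref{thm:characterization_of_Fredholm_abstract} produces a $B$-finite rank operator $K=\sum_{k=1}^m \ketbra{l_k}{(1+D^*D)r_k}\in \cL(\pE)$ with $r_k\in \Dom D^*D$, together with $S\in \cL(\pE,\Dom D)$, satisfying $SD+K=1$ on $\Dom D$. In particular, $\norm{x}\leq \norm{Kx}+\norm{S}\norm{Dx}$ for all $x\in \Dom D$. Applying \autoref{thm:finit_rank_and_approx_unit} to the finite rank operator $K$ with $\epsilon=\frac{1}{2}$, choose $n$ large enough that $\norm{K(1-\chi_n)}\leq \frac{1}{2}$. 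Splitting $Kx=K\chi_n x+K(1-\chi_n)x$ then gives
\[
\norm{x}\leq \norm{K}\norm{\chi_n x}+\tfrac{1}{2}\norm{x}+\norm{S}\norm{Dx},
\]
and after rearranging, $\norm{x}\leq 2\norm{K}\norm{\chi_n x}+2\norm{S}\norm{Dx}$, which is the desired bound.

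I do not anticipate any serious obstacle. The argument is essentially a bookkeeping exercise that glues \autoref{thm:characterization_of_Fredholm_abstract} to \autoref{thm:finit_rank_and_approx_unit}; the one delicate point, namely that the witness $K$ extracted from left-$B$-Fredholmness actually extends to an element of $\cL(\pE)$ (rather than only acting on $\Dom D$), has already been arranged in the proof of \autoref{thm:characterization_of_Fredholm_abstract} by selecting the $r_k$ in $\Dom D^*D$.
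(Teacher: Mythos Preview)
Your proof is correct and follows essentially the same approach as the paper. Both directions are handled identically: the sufficiency direction invokes condition (3) of \autoref{thm:characterization_of_Fredholm_abstract} with $K=\chi_n$, and the necessity direction extracts the finite rank $K$ and parametrix $S$ with $K=1-SD$ from the proof of \autoref{thm:characterization_of_Fredholm_abstract}, then uses \autoref{thm:finit_rank_and_approx_unit} to absorb $\norm{K(1-\chi_n)x}$ into the left-hand side—the only cosmetic difference being that the paper takes $\norm{K(1-\chi_n)}<1$ while you take it $\leq \tfrac{1}{2}$.
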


\begin{proof}
    Assume that \(D\) is left-\(B\)-Fredholm, then as in the proof \autoref{thm:characterization_of_Fredholm_abstract} we can find a \(B\)-finite rank \(K\in\cL(\pE)\) operator and an \(S\in \cL(\pE, \Dom D)\) such that \(K=1-SD\). Using \autoref{thm:finit_rank_and_approx_unit}, let \(\chi_n\) be such that \(\norm{K(1-\chi_n)} < 1\), then    
    \[
            \norm{x} \leq \norm{Kx} + \norm{S}\norm{D x} \leq \norm{K(1 - \chi_n)} \norm{x} + \norm{K}\norm{\chi_n x} + \norm{S}\norm{D x}
    \]
    from which we obtain the sought estimate.

    The implication in the other direction is a direct consequence of \autoref{thm:characterization_of_Fredholm_abstract}.
\end{proof}

Turning to the geometric case, note that \(C_c^\infty(M)\) acting on \(L^2(M;E)\) for a vector \(B\)-bundle contains (strict) approximate units. Also, if we have that \(\Dom D\subseteq H_{\loc}^s(M;E)\) for an unbounded regular operator \(D\), then \(k(1+D^*D)^{-\frac{1}{2}}\) is compact for any \(k\in C_c^\infty(M)\) since inclusion of Fréchet spaces automatically are always continuous (by the closed graph theorem) and 
\[
    \Dom D \into H_{\loc}^s(M;E) \overset{k}{\to} H^s(\supp{\chi};E) \into L^2(M;E)
\]
where the last map is compact by the Rellich theorem \cite[Theorem 7.4]{Shubin_2001}. Hence, \autoref{thm:approx_unit_iff_Fred} is an abstract version of \autoref{thm:characterization_of_Fredholm_geometric}.

For \autoref{thm:characterization_of_Fredholm_geometric_extended} we need to control the norm of a commutator, which we will do with the following lemma.

\begin{lemma}
    \label{thm:compact_upper_bound_with_approx_unit}
    Let \(D\colon \pE\dashrightarrow \pF\) be an unbounded regular operator, \(T\in \cK(\Dom D, \pE)\) and \(\set{\chi_n} \subseteq \cL(\pE)\) a self-adjoint strictly approximate unit, then for any \(\epsilon>0\) there is an \(M>0\) and an \(N\) such that 
    \[
        \norm{Tx} \leq \epsilon\norm{x} + \epsilon\norm{Dx} + M\norm{\chi_n x}
    \]
    for all \(x\in \Dom D\) and for all \(n\geq N\).
\end{lemma}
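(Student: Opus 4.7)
The plan is to combine the strategy of \autoref{thm:upper_bound_with_compact_operator} (approximating the compact operator $T(1+D^*D)^{-\frac{1}{2}}$ by a finite rank operator) with the strict-convergence bound on finite rank operators given by \autoref{thm:finit_rank_and_approx_unit}. That is, one should compensate for the $\norm{x}$ term appearing in \autoref{thm:upper_bound_with_compact_operator} by squeezing in a $\chi_n$ using strict convergence applied to the finite rank approximant.

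Fix $\epsilon>0$. Since $T\in\cK(\Dom D,\pE)$ the bounded extension of $T(1+D^*D)^{-\frac{1}{2}}$ lies in $\cK(\pE)$, so I would pick a $B$-finite rank operator $K=\sum_k \ketbra{l_k}{r_k}\in\cL(\pE)$ approximating it within $\epsilon/2$, and by density of $\Dom D$ in $\pE$ I may assume $r_k\in\Dom D$. Because $\Dom(1+D^*D)^{\frac{1}{2}}=\Dom D$, the vectors $s_k\coloneq (1+D^*D)^{\frac{1}{2}}r_k\in\pE$ are well-defined, and self-adjointness of $(1+D^*D)^{\frac{1}{2}}$ turns $K(1+D^*D)^{\frac{1}{2}}$ (a priori only defined on $\Dom D$) into the bounded finite rank operator $K'\coloneq \sum_k \ketbra{l_k}{s_k}\in\cL(\pE)$. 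Using $\norm{(1+D^*D)^{\frac{1}{2}}x}\leq \norm{x}+\norm{Dx}$, for $x\in\Dom D$ one gets
\[
    \norm{Tx-K'x}\leq \tfrac{\epsilon}{2}\bigl(\norm{x}+\norm{Dx}\bigr).
\]

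Now I would apply \autoref{thm:finit_rank_and_approx_unit} to $K'$: there is an $N$ such that $\norm{K'(1-\chi_n)}\leq \epsilon/2$ for all $n\geq N$. Splitting $K'x=K'\chi_n x+K'(1-\chi_n)x$ gives $\norm{K'x}\leq \norm{K'}\norm{\chi_n x}+\tfrac{\epsilon}{2}\norm{x}$, and combining with the previous estimate yields
\[
    \norm{Tx}\leq \norm{K'}\norm{\chi_n x}+\epsilon\norm{x}+\tfrac{\epsilon}{2}\norm{Dx},
\]
so the conclusion follows with $M=\norm{K'}$.

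I do not expect a real obstacle here; the one point requiring care is the identification of $K(1+D^*D)^{\frac{1}{2}}$ with a bounded finite rank operator on all of $\pE$, which is what allows \autoref{thm:finit_rank_and_approx_unit} to be applied. This is why the $r_k$ must be chosen in $\Dom D$, using the density of $\Dom D$ in $\pE$ together with continuity of the finite rank construction in the $r_k$-slot.
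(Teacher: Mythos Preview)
Your proof is correct and follows essentially the same approach as the paper: approximate $T(1+D^*D)^{-\frac{1}{2}}$ by a finite rank operator $K$ with $r_k\in\Dom D$, extend $K(1+D^*D)^{\frac{1}{2}}$ to a bounded finite rank operator on $\pE$, and then apply \autoref{thm:finit_rank_and_approx_unit} to absorb the residual term into $\norm{\chi_n x}$. Your constants are slightly tidier than the paper's (you use $\norm{(1+D^*D)^{\frac{1}{2}}x}\leq \norm{x}+\norm{Dx}$ and approximate within $\epsilon/2$ rather than rescaling at the end), but the argument is the same.
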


\begin{proof}
    Fix \(\epsilon>0\) and let \(K\) be a \(B\)-finit rank operator such that \(\norm{T(1+D^*D)^{-\frac{1}{2}} - K}<\epsilon\). Let \(l_k, r_k\in \pE\) be such that \(K=\sum_k^n \ketbra{l_k}{r_k}\). We can assume that \(r_k\in \Dom D\) by density. Now \(K(1+D^*D)^{\frac{1}{2}} = \sum_k^n \ketbra{l_k}{(1+D^*D)^{-\frac{1}{2}}r_k}\) and extends to a \(B\)-finite rank operator on \(\pE\). Using \autoref{thm:finit_rank_and_approx_unit}, fix an \(N\) such that \(\norm{K(1+D^*D)^{-\frac{1}{2}}(1-\chi_n)} < \epsilon\). Then for any \(x\in \Dom D\)    
    \[
        \begin{split}
            \norm{Tx}
            &= \norm{T(1+D^*D)^{-\frac{1}{2}}(1+D^*D)^{\frac{1}{2}}x} \\
            &\leq \norm{K(1+D^*D)^{\frac{1}{2}}x} + \epsilon\norm{(1+D^*D)^{\frac{1}{2}} x} \\
            &\leq \epsilon\norm{x} + \norm{K(1+D^*D)^{\frac{1}{2}}}\norm{\chi_nx} + \sqrt{2}\epsilon\norm{x} + \sqrt{2}\epsilon\norm{Dx}
        \end{split}
    \]
    which after a rescaling of \(\epsilon\) is the sought estimate.
\end{proof}

\begin{proposition}
    \label{thm:upper_bound_when_restricted_by_approx_unit}
    Let \(D\colon \pE\dashrightarrow \pF\) be an unbounded regular operator on Hilbert \((A,B)\)-bimodules such that \(\cl{A \pE} = \pE\) and \(\cA\subseteq A\) a dense \(*\)-subalgebra such that for any \(a\in \cA\) there is an \(a'\in \cA\) such that \(a=aa'\). Assume additionally for all \(a\in \cA\) that 
    \begin{enumerate}
        \item \(a\Dom D\subseteq \Dom D\);
        \item \(a(1+D^*D)^{-\frac{1}{2}}\in \cK(\pE)\); 
        \item \([D,a](1+D^*D)^{-\frac{1}{2}}\in \cK(\pE,\pF)\),
    \end{enumerate}
    then \(D\) is left-\(B\)-Fredholm if and only if there is a \(\chi\in \cA\) such that 
    \[
        \norm{(1-\chi)x}\lesssim \norm{D(1-\chi)x}
    \]
    for all \(x\in \Dom D\).
\end{proposition}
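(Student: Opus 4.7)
For $(\Leftarrow)$ I would apply Proposition \ref{thm:characterization_of_Fredholm_abstract}. Assuming the displayed estimate holds for some $\chi \in \cA$, I would use $\chi \Dom D \subseteq \Dom D$ together with the identity $D(1-\chi) = (1-\chi)D - [D,\chi]$ on $\Dom D$ to obtain
\[
\norm{x} \leq \norm{\chi x} + \norm{(1-\chi)x} \lesssim \norm{\chi x} + \norm{[D,\chi]x} + \norm{Dx}.
\]
Assumptions~2 and~3 make $x \mapsto \chi x \oplus [D,\chi]x$ a compact map from $\Dom D$ into $\pE \oplus \pF$, so the estimate above, combined with Proposition \ref{thm:characterization_of_Fredholm_abstract}, gives that $D$ is left-$B$-Fredholm.

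For $(\Rightarrow)$, the plan is to exhibit a self-adjoint strict approximate unit $\{\chi_n\} \subseteq \cA$ acting on $\pE$, apply Proposition \ref{thm:approx_unit_iff_Fred}, and then use the local factorization hypothesis to rewrite the resulting estimate. An approximate unit $\{u_n\}$ of the \Cst-algebra $A$ is automatically strict on $\pE$ because $\cl{A\pE} = \pE$; approximating each $u_n$ to within $1/n$ by an element of the dense $*$-subalgebra $\cA$ and symmetrizing (using that $\cA$ is a $*$-subalgebra) yields the desired sequence inside $\cA$. Since assumption~2 gives $\chi_n(1+D^*D)^{-1/2} \in \cK(\pE)$, Proposition \ref{thm:approx_unit_iff_Fred} produces an index $n$ with $\norm{x} \lesssim \norm{\chi_n x} + \norm{Dx}$ on $\Dom D$.

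With this in hand, the local factorization hypothesis supplies $\chi \in \cA$ with $\chi_n = \chi_n \chi$, whence $\chi_n(1-\chi) = 0$. Applying the estimate to $(1-\chi)x \in \Dom D$ (permissible by assumption~1) yields
\[
\norm{(1-\chi)x} \lesssim \norm{\chi_n(1-\chi)x} + \norm{D(1-\chi)x} = \norm{D(1-\chi)x},
\]
which is the desired inequality. The hard part will be the approximate unit construction: verifying that the symmetrized $1/n$-approximation of $\{u_n\}$, while clearly in $\cA$, still converges strictly on $\pE$. Everything else is a routine application of Propositions \ref{thm:characterization_of_Fredholm_abstract} and \ref{thm:approx_unit_iff_Fred}.
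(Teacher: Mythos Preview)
Your $(\Rightarrow)$ direction is exactly the paper's argument: pick a self-adjoint strict approximate unit in $\cA$, apply Proposition~\ref{thm:approx_unit_iff_Fred} to get some $\chi_n$, use the factorisation hypothesis to produce $\chi$ with $\chi_n(1-\chi)=0$, and substitute $(1-\chi)x$. (The paper simply asserts an approximate unit exists in $\cA$, so your explicit construction is more than is needed, but fine.)

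Your $(\Leftarrow)$ direction is more direct than the paper's and is essentially correct, with one wrinkle. The paper does \emph{not} feed $\chi\oplus[D,\chi]$ into Proposition~\ref{thm:characterization_of_Fredholm_abstract}; instead it invokes Lemma~\ref{thm:compact_upper_bound_with_approx_unit} to bound $\norm{[D,\chi]x}$ by $\epsilon\norm{x}+\epsilon\norm{Dx}+M\norm{\chi_n x}$, absorbs the $\epsilon$-terms, and obtains $\norm{x}\lesssim\norm{\chi x}+\norm{\chi_n x}+\norm{Dx}$, which fits Proposition~\ref{thm:characterization_of_Fredholm_abstract}(3) verbatim because $\chi,\chi_n\in\cL(\pE)$. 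The reason for this detour is precisely that condition~(3) there demands $K\in\cL(\pE,\pG)\cap\cK(\Dom D,\pG)$, whereas your $K\colon x\mapsto\chi x\oplus[D,\chi]x$ lies only in $\cK(\Dom D,\pE\oplus\pF)$: the commutator $[D,\chi]$ need not extend boundedly to all of $\pE$ (think of higher-order operators). So you cannot quote Proposition~\ref{thm:characterization_of_Fredholm_abstract} as written. Your shortcut still works, but you must argue directly: the estimate makes $D\oplus K\in\cL(\Dom D,\pF\oplus\pE\oplus\pF)$ bounded below, hence left-invertible (for adjointable $T$, $\norm{Tx}\ge c\norm{x}$ forces $\norm{T^*Tx}\ge c^2\norm{x}$, so $T^*T$ is invertible), and composing a left inverse with the compact component $K$ exhibits $D$ as left-invertible modulo $\cK(\Dom D)$. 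Once this point is addressed, your route is arguably cleaner than the paper's use of Lemma~\ref{thm:compact_upper_bound_with_approx_unit}.
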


\begin{proof}

    Assume \(D\) is left-\(B\)-Fredholm. Since there is an approximate unit in \(\cA\) that is also an approximate unit in \(\cL(A)\), \autoref{thm:approx_unit_iff_Fred} give us an \(a\in \cA\) such that 
    \[
        \norm{x} \lesssim \norm{ax} + \norm{Dx}
    \]
    for all \(x\in \Dom D\). Let \(\chi\in\cA\) be such that \(a \chi = a\), then 
    \[
        \norm{(1-\chi)x} \lesssim \norm{a(1-\chi)x} + \norm{D(1-\chi)x} = \norm{D(1-\chi)x}.
    \]

    For the other direction, fix an approximate unit \(\set{\chi_n}\subseteq \cA\). Let \(A>0\) be such that
    \[
        \norm{(1 - \chi) x}\leq A \norm{D (1-\chi) x}
    \]
    and using \autoref{thm:compact_upper_bound_with_approx_unit} let \(M>0\) and \(N\) be such that 
    \[
        \norm{[D,\chi]x}\leq \frac{1}{2A}\norm{x} + \frac{1}{2A}\norm{Dx} + M\norm{\chi_n x}
    \]
    for all \(n>N\). Then
    \[
        \begin{split}
            \norm{x} 
            &\leq \norm{\chi x} + \norm{(1-\chi)x} \\
            &\leq \norm{\chi x} + A\norm{D(1-\chi)x} \\
            &\leq \norm{\chi x} + A\norm{(1-\chi)}\norm{Dx} + A\norm{[D,\chi]x} \\
            &\leq \norm{\chi x} + (A\norm{(1-\chi)} + \tfrac{1}{2})\norm{Dx} + \tfrac{1}{2}\norm{x} + M\norm{\chi_n x}
        \end{split}
    \]
    which shows that \(\chi^*\chi + \chi_n^*\chi_n + D^*D\) is invertible. Since \((\chi^*\chi + \chi_n^*\chi_n)^{\frac{1}{2}}\in A\) and \(\cA\) is dense in \(A\), \(a(1+D^*D)^{-\frac{1}{2}}\in \cK(\pE)\) for all \(a\in A\). Using \autoref{thm:characterization_of_Fredholm_abstract} we obtain that \(D\) is left-\(B\)-Fredholm.
\end{proof}

Similarly to the previous discussion regarding \autoref{thm:approx_unit_iff_Fred} found directly thereafter, \autoref{thm:upper_bound_when_restricted_by_approx_unit} is a direct abstraction of the first statement in \autoref{thm:characterization_of_Fredholm_geometric_extended} when considering \(C_c^\infty(M)\) acting on \(L^2(M;E)\) and \(L^2(M;F)\) for vector \(B\)-bundles \(E\) and \(F\).

\subsection{Persson's lemma}
\label{sec:perssonslemma}

The first statement in \autoref{thm:characterization_of_Fredholm_geometric_extended} is a statement about a norm estimate for sections with support outside a compact set. This relates to the classical result Person's lemma \cite{Persson_1960} in spectral theory, which gives a quantitative result on where the bottom of the essential spectrum is for a self-adjoint operator bounded from below.

In this section we will prove the second statement in \autoref{thm:characterization_of_Fredholm_geometric_extended} which is a perturbation result for left-\(B\)-Fredholm operators, and then use this result to show that Persson's lemma holds for far more operators than how the result usually is stated.

\begin{proof}[Proof of second statement in \autoref{thm:characterization_of_Fredholm_geometric_extended}]
    For a compact \(K\subseteq M\) we introduce the notation
    \[
        \Sigma_K \coloneq \inf_{\substack{f\in \Dom D \setminus \set{0} \\ \supp f \subseteq M\setminus K}} \frac{\norm{Df}}{\norm{f}}
    \]
    and note that if \(K\subseteq K'\) then \(\Sigma_K \leq \Sigma_{K'}\), hence \(\Sigma_K\leq \Sigma\) for any \(K\).

    The first statement in \autoref{thm:characterization_of_Fredholm_geometric_extended} (which is already proven, see \autoref{thm:upper_bound_when_restricted_by_approx_unit} and discussion thereafter) says that \(D\) is left-\(B\)-Fredholm if and only if there is a compact \(K_0\subseteq M\) such that \(\Sigma_{K_0}>0\). Hence, \(D\) is left-\(B\)-Fredholm if and only if \(\Sigma>0\).

    Now fix a \(T\in \cL(L^2(M;E))\) such that \(\norm{T}<\Sigma\). Then there is a compact \(K_1\subseteq M\) such that \(\norm{T}<\Sigma_{K_1}\). Take any \(f\in \Dom D\) with \(\supp f \subseteq M\setminus K_1\), then by definition \(\norm{Df}\geq \Sigma_{K_1}\norm{f}\). Hence,
    \[
        \norm{(D+T)f} \geq \norm{Df} - \norm{T}~\norm{f}\geq (\Sigma_{K_1} - \norm{T})\norm{f}
    \]
    and the first statement in \autoref{thm:characterization_of_Fredholm_geometric_extended} implies that \(D+T\) is left-\(B\)-Fredholm since \(\Sigma_{K_1} - \norm{T} > 0\).
\end{proof}

Note that \(T\) in the perturbation result of \autoref{thm:characterization_of_Fredholm_geometric_extended} could be taken to be an unbounded operator as well if one has control over the norm of \(T\) with respect to the graph-norm of \(D\).

\begin{corollary}[Persson's lemma]
    \label{thm:perssonslemma}
    Let \(D\) be a self-adjoint unbounded operator that is bounded from below on the Hilbert space \(L^2(M;E)\). Assume that \(\Dom D \subseteq H_{\loc}^s(M;E)\) for some \(s>0\) and for any \(a\in C_c^\infty(M)\) that \(a\Dom D\subseteq \Dom D\) and \([D,a](1+D^2)^{-\frac{1}{2}}\) is compact. Then \(\widetilde{\Sigma} = \inf \essspec(D)\) where
    \[
        \widetilde{\Sigma} \coloneq \sup_{\substack{K\subseteq M \\ \text{compact}}} \inf_{\substack{f\in \Dom D \setminus \set{0} \\ \supp f \subseteq M\setminus K}} \frac{\braket{f, Df}}{\norm{f}^2}
    \]
    and \(\essspec(D)\coloneq \setcond{\lambda\in\bR}{D-\lambda \text{ is not left-Fredholm}}\).
\end{corollary}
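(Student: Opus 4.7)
The plan is to establish the two inequalities $\widetilde{\Sigma} \leq \inf \essspec(D)$ and $\widetilde{\Sigma} \geq \inf \essspec(D)$, in each case applying the already proven first statement of \autoref{thm:characterization_of_Fredholm_geometric_extended} to a shifted operator $D - \lambda$ (which inherits the domain and commutator assumptions of $D$), and using the spectral theorem for the self-adjoint operator $D$ for the reverse direction. Note that self-adjointness of $D-\lambda$ together with the fact that adjoints interchange left- and right-Fredholmness implies that left-Fredholmness of $D-\lambda$ is equivalent to Fredholmness, so the essential spectrum as defined in the statement coincides with the classical one.

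For the inequality $\widetilde{\Sigma} \leq \inf \essspec(D)$, I would fix $\lambda < \widetilde{\Sigma}$ and show $\lambda \notin \essspec(D)$. By definition of $\widetilde{\Sigma}$ there exist a compact $K \subseteq M$ and a $\delta > 0$ such that $\braket{f, (D-\lambda)f} \geq \delta \norm{f}^2$ for every $f \in \Dom D$ with $\supp f \subseteq M \setminus K$, and Cauchy--Schwarz then gives $\norm{(D-\lambda) f} \geq \delta \norm{f}$ for such $f$. Applying the first statement of \autoref{thm:characterization_of_Fredholm_geometric_extended} to $D-\lambda$ shows it is left-Fredholm and hence Fredholm, so $\lambda \notin \essspec(D)$.

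For the reverse inequality I would take an arbitrary $\lambda < \inf\essspec(D)$ and produce a compact $K$ witnessing $\widetilde{\Sigma} \geq \lambda$. Below $\inf \essspec(D)$ the spectrum of $D$ consists only of isolated eigenvalues of finite multiplicity which can accumulate only at $\inf\essspec(D)$, so I can pick $\mu \in (\lambda, \inf\essspec(D)) \setminus \spec(D)$; the spectral projection $P \coloneq \chi_{(-\infty, \mu]}(D)$ is then finite rank with image in $\Dom D$, and setting $\lambda'' \coloneq \inf(\spec(D) \cap (\mu, \infty)) > \mu > \lambda$ together with $c_0 \coloneq \inf \spec(D)$, the spectral theorem gives
\[
    \braket{f, Df} \;\geq\; c_0 \norm{Pf}^2 + \lambda'' \norm{(1-P) f}^2 \;=\; \lambda'' \norm{f}^2 - (\lambda'' - c_0) \norm{Pf}^2
\]
for every $f \in \Dom D$. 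Choosing an orthonormal basis $e_1, \dots, e_n$ of $\Ran P$ and using that $|\braket{e_i, f}| \leq \norm{e_i|_{M\setminus K}} \norm{f}$ when $\supp f \subseteq M \setminus K$, I get $\norm{Pf}^2 \leq (\sum_i \norm{e_i|_{M\setminus K}}^2) \norm{f}^2$; since each $e_i$ lies in $L^2(M;E)$, this sum tends to $0$ as $K$ exhausts $M$ by dominated convergence. Picking $K$ so that this sum is less than $(\lambda'' - \lambda)/(\lambda'' - c_0)$ (the degenerate case $\mu < c_0$ makes $P = 0$ and is immediate) yields $\braket{f, Df}/\norm{f}^2 > \lambda$ for every $f$ supported in $M \setminus K$, hence $\widetilde{\Sigma} \geq \lambda$.

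The main obstacle is not technical but conceptual: ensuring that standard spectral theory (finite-rank low-lying spectral projections, existence of gaps $\mu \notin \spec(D)$ below $\inf\essspec(D)$) is available for the definition of $\essspec(D)$ in the statement. This is handled by the equivalence of left-Fredholmness and Fredholmness for self-adjoint operators noted above, after which the remainder of the argument is bookkeeping.
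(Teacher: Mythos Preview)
Your proposal is correct and follows essentially the same two-step strategy as the paper: invoke \autoref{thm:characterization_of_Fredholm_geometric_extended} for the inequality $\widetilde{\Sigma}\leq\inf\essspec(D)$, then use that the spectral projection onto the spectrum below $\inf\essspec(D)$ is finite rank and hence small on sections supported outside a large compact set for the reverse inequality. The only cosmetic difference is that you apply the coercive-at-infinity characterization directly to $D-\lambda$, whereas the paper first reduces to positive $D$ and goes through the chain $\widetilde{\Sigma}\leq\Sigma\leq\inf\essspec(D)$ via the perturbation part of \autoref{thm:characterization_of_Fredholm_geometric_extended}; your bookkeeping in the second half is also a bit more explicit than the paper's (which cites \cite{Fournais_Helffer_2010} for that part).
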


\begin{proof}
    First assume that \(D\) is positive, then
    \begin{equation}
        \label{eq:lowerbound_essspec}
        \widetilde{\Sigma} \leq \Sigma \leq \inf \essspec(D)
    \end{equation}
    using \autoref{thm:characterization_of_Fredholm_geometric_extended}. If \(D\) is instead bounded from below such that \(D+\gamma\) is positive, then \eqref{eq:lowerbound_essspec} also holds since adding \(\gamma\) to \(D\) simply shifts both \(\Sigma\) and \(\essspec(D)\) by \(\gamma\).

    To show the inequality in the other direction, take any \(\mu < \inf \essspec(D)\). Then \(P=\chi_{(-\infty, \widetilde{\Sigma}(D))}(D)\) is a finite rank projection since \(D\) is bounded from bellow and for any \(\epsilon>0\) we can choose a \(K_0\) such that \(\norm{Pf}<\epsilon\norm{f}\) for any \(f\) with \(\supp f \subseteq M\setminus K_0\). Hence,
    \[
        \begin{split}
            \braket{f, Df} 
            &= \braket{(1-P)f, D(1-P)f} + \braket{Pf, DPf} \geq \mu\norm{(1-P)f}^2 - \mu\norm{Pf}^2 \\
            &= \mu \norm{f}^2 - 2\mu\norm{Pf}^2 \geq \mu(1-2\epsilon)\norm{f}^2
        \end{split}
    \]
    if \(\supp f \subseteq M\setminus K_0\) and we obtain that
    \[
        \widetilde{\Sigma}(D) \geq \mu(1-2\epsilon).
    \]
    Since we can choose \(\epsilon\) arbitrarily small we see that \(\widetilde{\Sigma}(D) \geq \mu\), and since we can choose \(\mu\) arbitrarily close to \(\inf \essspec(D)\) we see that \(\widetilde{\Sigma}(D) \geq \inf \essspec(D)\).
\end{proof}

The part of the proof of \autoref{thm:perssonslemma} not depending on \autoref{thm:characterization_of_Fredholm_geometric_extended} is based on \cite[Apendix B.4]{Fournais_Helffer_2010}, and is the reason we restrict ourselves to Hilbert spaces in \autoref{thm:perssonslemma}.

\section{Unbounded Kasparov modules without locally compact resolvent}
\label{sec:KKtheory}

\subsection{Truly unbounded Kasparov modules}

The relative index theorem in \(KK\)-theory by Bunke \cite{Bunke_1995} was proven by directly constructing bounded Kasparov modules from unbounded operators instead of using unbounded Kasparov modules, as the operators used did not necessarily have compact resolvent, that is, locally compact resolvent when considering the unital algebra of the Higson compactification. To construct these bounded cycles, Bunke assumed that the operators were invertible at infinity. But with our characterization of Fredholmness, invertibility at infinity is implied by Fredholmness, and we arrive at the following altered definition of unbounded Kasparov modules. This definition was studied in \cite[Definition 2.4]{Wahl_2007} where we have taken the name from. 

\begin{definition}[Truly unbounded Kasparov module]
    \label{def:truly_unbounded_kasparov_modules}
    Let \(A\) and \(B\) be trivially graded \Cst-algebras. An even truly unbounded Kasparov \((A,B)\)-module is a triple \((\cA, \pE, D)\) where \(D\) is an odd self-adjoint regular operator on a graded Hilbert \((A,B)\)-bimodule \(\pE\) and \(\cA\subseteq A\) is a dense \(*\)-subalgebra such that the following are satisfied:
    \begin{enumerate}
        \item \(D\) is \(B\)-Fredholm;
        \item \(a \Dom D \subseteq \Dom D\) for all \(a\in \cA\);
        \item \([D,a](1+D^2)^{-\frac{1}{2}} \in \cK(\pE)\) for all \(a\in \cA\);
        \item for each \(a\in \cA\) there is an \(m\geq 1\) such that \([D,a](1+D^2)^{-\frac{1}{2}+\frac{1}{2m}} \in \cL(\pE)\).
    \end{enumerate}
    If there is an \(m\) that the last condition holds for all \(a\in \cA\), we say that the cycle is of order \(m\). An odd truly unbounded Kasparov \((A,B)\)-module is defined the same except that \(\pE\) is trivially graded and \(D\) is not required to be odd.
\end{definition}

The difference between truly unbounded Kasparov modules to the usual definition of unbounded Kasparov modules (or \(\epsilon\)-unbounded Kasparov modules \cite[Appendix A]{Goffeng_Mesland_2015}) is that the condition of locally compact resolvent, meaning that \(a(1+D^2)^{-\frac{1}{2}}\in \cK(\pE)\), is replaced by the two conditions that \(D\) is \(B\)-Fredholm and \([D,a](1+D^2)^{-\frac{1}{2}} \in \cK(\pE)\). In particular, if \(D\) has compact resolvent and fulfills one definition, it also fulfills the other. The benefit of truly unbounded Kasparov modules is that we easily can pass to the unitalization of \(A\) since all expressions with interaction between \(A\) and \(D\) involve commutators or mapping the domain to the domain. As we will see, truly unbounded Kasparov modules define a class in \(KK\)-theory and then one can use the functorial map \(KK^0(A^+,B)\to K_0(B)\) which is taking the \(B\)-Fredholm index of the operator \(D\).

\begin{remark}
Note however that adding the unit cannot be seen as a map from \(KK^*(A,B)\) to \(KK^*(A^+,B)\). For a concrete example to see that this cannot be, consider an elliptic differential operator \(D\) over a compact manifold with boundary \(M\). Then any closed realization \(D_e\) of \(D\) defines the same class \([D_e]\in K^*(C_0(M))\) \cite[Section 3.5]{Fries_2025}. In particular, any Fredholm realization corresponding to a boundary condition on \(D\) give the same class in \(K^*(C_0(M))\) and can at the same time be lifted as a cycle to define a class in \(K^*(C_0(M)^+)\). However, the classes in \(K^*(C_0(M)^+)\) cannot be the same as different boundary conditions on the same differential operator can result in different Fredholm index. For example different spectral cuts for an Atiyah-Patodi-Singer boundary condition on a Dirac operator produce different indices as quantified by spectral flow.

That any truly unbounded Kasparov module for \(A\) also defines a cycle for \(A^+\) should instead be seen as finding a preimage in the map \(KK^*(A^+,B) \to KK^*(A,B)\).
\end{remark}

The definitions of \(\epsilon\)-unbounded Kasparov modules and truly unbounded Kasparov modules are not the same in the sense that there are \(\epsilon\)-unbounded Kasparov modules that are not truly unbounded Kasparov modules and vice versa. For example, on one hand there are Dirac operators having full spectrum with locally compact resolvent with respect to \(C_c^\infty(M)\), and on the other hand there are Fredholm operators that do not have compact resolvent, that is, locally compact resolvent with respect to a unitalization. 

\begin{example}
    Sometimes it is possible to pass a truly unbounded Kasparov module to a larger unitalization then \(A^+\). For instance, let \(\slashed{D}\) be the Dirac operator constructed from the spin structure of a complete spin manifold \(M\). Then
    \[
        (C_c^\infty(M),L^2(M;\slashed{S}),\slashed{D})
    \]
    defines an unbounded Kasparov \((C_0(M),\bC)\)-module. In \cite[Example 2.15]{Fries_2025}, it is also shown that \([\slashed{D},a]\colon \Dom \slashed{D} \to L^2(M;\slashed{S})\) is compact for any \(a\in C_h^\infty(M)\coloneq \setcond{b\in C_b^\infty(M)}{db\in C_0^\infty(M;T^*M)}\). Hence, if \(M\) has positive scalar curvature outside a compact, then \(\slashed{D}\) is Fredholm and 
    \[
        (C_h^\infty(M),L^2(M;\slashed{S}),\slashed{D})
    \]
    defines a truly unbounded Kasparov \((C_h(M),\bC)\)-module where \(C_h(M)=\cl{C_h^\infty(M)}\) is called the Higson compactification \cite[Chapter 5]{Roe_1993}.

    In particular, with in \cite{Fries_2025} in mind we see that \(\d [\slashed{D}] \in K^{*-1}(C(\nu M))\) is an obstruction of existence of scalar curvature strictly positive outside a compact, where \(\nu M\) is the Higson corona defined from \(C(\nu M)=C_h(M)/C_0(M)\) and \(\d\colon K^*(C_0(M))\to K^{*-1}(C(\nu M))\) is the boundary map in \(K\)-homology.

\end{example}

\begin{remark}
The naming of “truly unbounded” is misleading, as the operator \(D\) could as well be bounded. In fact, given any bounded Kasparov module \((\pE, F)\), meaning an operator \(F\in \cL(\pE)\) such that \([F,a],a(F^* - F),a(F^2 - 1)\in \cK(\pE)\) for all \(a\in A\), we can modify \(D\) by \(A\)-compact perturbations to ensure that \(F=F^*\) and \(F^2-1 = 0\). See for example \cite[Section 8.3]{Higson_Roe_2000}. Any such cycles \((\pE,F)\) also define a truly unbounded Kasparov module. As a consequence, any element in \(KK^*(A,B)\) can be represented by a truly unbounded Kasparov module. One can compare the effort to show this to the work in \cite{Baaj_Julg_1983} showing that the usual notion unbounded \(KK\)-cycles can represent any class in \(KK^*(A,B)\).
\end{remark}

Note also that as a model for \(K\)-theory, a truly unbounded Kasparov \((\bC,B)\)-module simply consists of a \(B\)-Fredholm operator. Looking at real \Cst-algebras, or Real \Cst-algebras, one could also use truly unbounded Kasparov modules as a model for \(KKO\)-theory or \(KKR\)-theory.

\subsection{Alternatives to the bounded transform}
\label{sec:bounded_cycles}

It is important to note that the bounded transform \(F_{D}\coloneq D(1+D^2)^{-\frac{1}{2}}\) does not generally give a bounded \(KK\)-cycle from a truly unbounded Kasparov module. If the bounded transform \(F_D\) of a self-adjoint regular operator \(D\) would have given a bounded cycle, then \(D\) would automatically have locally compact resolvent since \(a(F_D^2 - 1) = -a(1+D^2)^{-1}\), which does not need to be the case for truly unbounded Kasparov modules.

To obtain bounded cycles from a truly unbounded Kasparov module, we will present two recipes. We will show that any choices within the recipes or between these recipes result in the same class in \(KK\)-theory.

The first way to construct a bounded cycle uses functional calculus and is originally from \cite[Definition 2.4]{Wahl_2007}.

\begin{proposition}
    \label{thm:Wahl_gives_bounded_cycles}
    For a truly unbounded Kasparov \((A,B)\)-module \((\cA, \pE, D)\) there exists an odd real-valued non-decreasing function \(\chi\in C(\bR)\) with well-defined limits at \(\pm \infty\) such that \(\chi(D)^2 - 1\in \cK(\pE)\). Then \((\pE, \chi(D))\) defines a bounded Kasparov \((A,B)\)-module and the class \([(\pE, \chi(D))]\in KK^*(A,B)\) is independent of choice of such \(\chi\).
\end{proposition}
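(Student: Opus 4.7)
The plan is to use the characterization of Fredholmness in \autoref{thm:compinv} to construct $\chi$ explicitly, verify the bounded Kasparov axioms, and establish independence of the class via a straight-line operator homotopy. Since $D$ is $B$-Fredholm, \autoref{thm:compinv} supplies a positive $k\in \CompInv(D)$, so $H:=(k+D^2)^{-1}\in \cL(\pE)$ is bounded and $k(1+D^2)^{-1/2}\in \cK(\pE)$. I would then pick $N>0$ with $N^2\norm{H}<1$ and choose $\chi\in C(\bR)$ odd and non-decreasing with $\chi(x)=\sgn(x)$ for $\abs{x}\geq N$ (necessarily steep near zero if $N$ is small). Then $h:=1-\chi^2\in C_c(\bR)$ with $\supp h\subseteq[-N,N]$. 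Using $(k+D^2)H=1$,
\[
    h(D) = h(D)(k+D^2)H = h(D)\,k\,H + h_1(D)\,H,\qquad h_1(x):=x^2 h(x),
\]
and $h(D)\,k\in\cK(\pE)$ because $(1+D^2)^{-1/2}k=(k(1+D^2)^{-1/2})^*\in\cK(\pE)$ while $h(D)(1+D^2)^{1/2}$ is bounded (the symbol $h(x)(1+x^2)^{1/2}$ has compact support). Setting $h_n(x):=x^{2n}h(x)$ and iterating yields
\[
    h(D) = \sum_{j=0}^{n-1} h_j(D)\,k\,H^{j+1} + h_n(D)\,H^n,
\]
where each summand in the finite sum is compact and $\norm{h_n(D)H^n}\leq (N^2\norm{H})^n\norm{h}_\infty\to 0$; letting $n\to\infty$ exhibits $h(D)$ as a norm-convergent series of compact operators, so $\chi(D)^2-1=-h(D)\in\cK(\pE)$.

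The remaining bounded Kasparov axioms for $(\pE,\chi(D))$ are routine: $\chi(D)$ is bounded, self-adjoint, and odd with respect to the grading when $\pE$ is graded, and $a(\chi(D)^2-1)\in\cK(\pE)$ is immediate from the previous step. Compactness of $[\chi(D),a]$ for $a\in\cA$ follows from the truly unbounded axiom $[D,a](1+D^2)^{-1/2}\in\cK(\pE)$ by the standard route: $[(i\pm D)^{-1},a]=\mp i(i\pm D)^{-1}[D,a](i\pm D)^{-1}$ is compact, so $[g(D),a]\in\cK(\pE)$ for every $g\in C_0(\bR)$ by density of the resolvent subalgebra, and the same holds for the bounded transform $F(x):=x(1+x^2)^{-1/2}$ via the integral representation $F(D)=\tfrac{2}{\pi}\int_0^\infty (1+\lambda^2+D^2)^{-1}D\,d\lambda$; since $\chi-F\in C_0(\bR)$, combining these gives $[\chi(D),a]\in\cK(\pE)$.

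For independence, the key observation is that
\[
    \mathcal{I}:=\setcond{g\in C_0(\bR)}{g(D)\in \cK(\pE)}
\]
is a closed ideal of $C_0(\bR)$, hence of the form $C_0(U)$ for some open $U\subseteq \bR$. If $\chi_0,\chi_1$ both satisfy the hypotheses then $1-\chi_i^2\in\mathcal{I}$ vanishes on $\bR\setminus U$, so $\chi_i^2=1$ there; the oddness and monotonicity of $\chi_i$ force $\chi_i(x)=\sgn(x)$ on $\bR\setminus U$ for $i=0,1$. Thus $\chi_0-\chi_1$ vanishes on $\bR\setminus U$, and since it also lies in $C_0(\bR)$ we obtain $\chi_0-\chi_1\in C_0(U)=\mathcal{I}$, i.e.\ $(\chi_0-\chi_1)(D)\in\cK(\pE)$. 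The straight-line path $\chi_t:=(1-t)\chi_0+t\chi_1$ is odd, non-decreasing and has limits $\pm 1$, and the identity
\[
    \chi_0\chi_1-1 = \tfrac{1}{2}(\chi_0^2-1)+\tfrac{1}{2}(\chi_1^2-1)-\tfrac{1}{2}(\chi_0-\chi_1)^2
\]
combined with the above yields $\chi_t(D)^2-1\in\cK(\pE)$ for all $t\in[0,1]$, giving a norm-continuous operator homotopy of bounded Kasparov modules from $\chi_0(D)$ to $\chi_1(D)$. I expect the main obstacle to be the iteration in the first step: the constraint $N^2\norm{H}<1$ is exactly where the Fredholmness of $D$ enters concretely, dictating how steep $\chi$ must be near the origin; the independence argument, by comparison, is a short exercise in the $C_0(U)$-structure of closed ideals of $C_0(\bR)$ combined with the oddness--monotonicity rigidity of admissible $\chi$.
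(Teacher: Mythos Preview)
Your argument is essentially correct and takes a somewhat different route from the paper, with one step that needs more care.

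\textbf{Existence of $\chi$.} Your iteration via $k\in\CompInv(D)$ and the geometric series in $N^2\norm{H}$ is correct and genuinely different from the paper. The paper argues spectrally instead: since $D(1+D^2)^{-1/2}$ is invertible in the Calkin algebra $\cL(\pE)/\cK(\pE)$, there is a neighborhood $U$ of $0$ with $\phi(D)\in\cK(\pE)$ for every $\phi\in C_c(U)$, and then $\chi(x)=x(\phi(x)+x^2)^{-1/2}$ works. Your approach is more computational; the paper's is shorter and yields the auxiliary fact that \emph{every} compactly supported function near $0$ gives a compact operator, which it reuses for independence.

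\textbf{The commutator $[\chi(D),a]$.} Here you write that this ``follows from the truly unbounded axiom $[D,a](1+D^2)^{-1/2}\in\cK(\pE)$,'' but condition~3 alone does not make the integral for $[F(D),a]$ norm-convergent: with only that hypothesis the integrand decays like $(1+\lambda^2)^{-1/2}$, which is not integrable. You must also invoke the order condition (condition~4 in \autoref{def:truly_unbounded_kasparov_modules}), which buys an extra factor $(1+\lambda^2)^{-1/(2m)}$ and hence integrability; this is exactly how the paper handles it (cf.\ the proof of \autoref{thm:commutator_compact}, which is the same computation with the compact perturbation $k$ replaced by a scalar). Also a harmless typo: $[(i\pm D)^{-1},a]=-(i\pm D)^{-1}[D,a](i\pm D)^{-1}$, not $\mp i(\cdots)$.

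\textbf{Independence.} Your argument via the closed ideal $\mathcal{I}=C_0(U)\ideal C_0(\bR)$ is correct and arguably cleaner than the paper's, which proves a separate lemma (\autoref{thm:perturb_funccalc_compact}) showing that $f(D)\in\cK(\pE)$ and $\supp g\subseteq\supp f$ imply $g(D)\in\cK(\pE)$, and then compares each admissible $\chi$ to a common reference $x(\phi(x)+x^2)^{-1/2}$. Your ideal observation subsumes that lemma. The homotopy you construct is unnecessary: once $(\chi_0-\chi_1)(D)\in\cK(\pE)$, the two bounded Kasparov modules differ by a compact perturbation and therefore already give the same class.
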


To prove \autoref{thm:Wahl_gives_bounded_cycles}, Wahl showed that \([\chi(D),a]\in \cK(\pE)\) for any function \(\chi\in C(\bR)\) with well-defined limits at \(\pm \infty\) \cite[the proof after Definition 2.4]{Wahl_2007}. The same method will be used later in the proof of \autoref{thm:diff_of_func_is_comp}. The independence of \(\chi\) was not presented in \cite{Wahl_2007} and will be proven in the following two lemmas.

Note that requiring \(\chi\) to be real-valued and non-decreasing in \autoref{thm:Wahl_gives_bounded_cycles} can just as well be relaxed to \(\chi^{-1}(-1)\subseteq (-\infty,0)\) and \(\chi^{-1}(1)\subseteq (0,\infty)\).

\begin{lemma}
    \label{thm:perturb_funccalc_compact}
    Let \(D\) be a self-adjoint operator on a Hilbert module \(\pE\) and \(f\in C_b(\bR)\) with well-defined limits at \(\pm \infty\) be such that \(f(D)\in \cK(\pE)\), then \(g(D)\in \cK(\pE)\) for any \(g\in C_b(\bR)\) with well-defined limits at \(\pm \infty\) and \(\supp g \subseteq \supp f\). 
\end{lemma}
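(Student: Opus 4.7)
The plan is to approximate $g$ uniformly on the two-point compactification $\bar{\bR}$ by functions of the form $h\cdot f$ with $h\in C(\bar{\bR})$ bounded, so that $g(D)$ arises as a norm limit of compact operators $h(D)f(D)$ (each compact because $f(D)\in\cK(\pE)$ and $\cK(\pE)$ is a two-sided ideal in $\cL(\pE)$). Since $\cK(\pE)$ is norm-closed, this will deliver $g(D)\in\cK(\pE)$.

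First I would reduce to the case $f\geq 0$ by replacing $f$ with $\abs{f}^2=\bar f f$. This has the same zero set as $f$, so $\supp\abs{f}^2=\supp f$, and $\abs{f}^2(D)=f(D)^*f(D)\in\cK(\pE)$ because $\cK(\pE)$ is a $\ast$-closed two-sided ideal. For each $n\geq 1$ I would then fix a continuous cut-off $\phi_n\colon [0,\infty)\to [0,1]$ vanishing on $[0,\tfrac{1}{2n}]$ and identically $1$ on $[\tfrac{1}{n},\infty)$, and set $g_n \coloneq g\cdot \phi_n(f) = h_n\cdot f$ with
\[
h_n(x) = \begin{cases} g(x)\,\phi_n(f(x))/f(x), & f(x)>\tfrac{1}{2n},\\ 0, & f(x)\leq \tfrac{1}{2n}. \end{cases}
\]
The function $h_n$ is a bounded element of $C(\bar{\bR})$: it is identically zero on the open set $\{f<\tfrac{1}{2n}\}$ and continuous where $f$ is bounded below by $\tfrac{1}{2n}$. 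Hence $g_n(D)=h_n(D)f(D)\in\cK(\pE)$ for every $n$.

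The core step — and the main technical obstacle — is to prove $g_n\to g$ in supremum norm on $\bar{\bR}$. The difference $g-g_n = g\cdot(1-\phi_n(f))$ is supported on $\{f\leq \tfrac{1}{n}\}$, so the task is to show $\sup_{f(x)\leq 1/n}\abs{g(x)}\to 0$ as $n\to\infty$. By a standard compactness argument on the compact space $\bar{\bR}$ (extract a convergent subsequence $x_n\to x^*$ with $f(x_n)\to 0$ and $\abs{g(x_n)}\geq\delta>0$, then $f(x^*)=0$ while $\abs{g(x^*)}\geq\delta$), this reduces to showing that $g$ vanishes pointwise on $\{f=0\}$. Extracting this vanishing from $\supp g\subseteq \supp f$ is the delicate point: the inclusion of closures together with continuity of $g$ on $\bar{\bR}$ forces $g\equiv 0$ on the interior of $\{f=0\}$ and on $\partial\supp f$, and the intended reading of the support condition is that $g$ also vanishes at the remaining zeros of $f$ (those interior to $\supp f$). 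Granting this, the uniform convergence holds, and $g(D)=\lim_n g_n(D)\in\cK(\pE)$ by closedness of $\cK(\pE)$.
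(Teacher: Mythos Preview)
Your approach is essentially the paper's: approximate $g$ in sup norm by functions of the form $b\cdot f$ with $b$ bounded continuous, then use that $\cK(\pE)$ is a closed ideal. The paper passes to $C([0,1])$, picks $g_n\in C_c(\interior{(\supp f)})$ converging to $g$, and sets $b_n=g_n/f$ on $\supp g_n$; your cut-off $g_n=g\cdot\phi_n(f)$ is a more explicit version of the same manoeuvre.

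The obstacle you isolate is real, and it is a defect of the \emph{statement}, not of your method. The hypothesis $\supp g\subseteq\supp f$ does not force $g$ to vanish at zeros of $f$ lying in $\interior{(\supp f)}$, and without that the conclusion can fail: take $D$ with eigenvalue $0$ of infinite multiplicity together with simple eigenvalues $1/n$, set $f(x)=x^2\phi(x)$ and $g=\phi$ for a bump $\phi$ at the origin; then $\supp g=\supp f$ and $f(D)\in\cK(\pE)$ but $g(D)\notin\cK(\pE)$. The paper's proof has exactly the same gap---it asserts $\sup_{x\in\supp g_n}\abs{f(x)}^{-1}<\infty$ from compactness of $\supp g_n$, which fails if $f$ vanishes there---and does not flag it. In the only place the lemma is invoked, however, $f=\chi^2-1$ for $\chi$ odd and non-decreasing, so $f$ has no zeros in $\interior{(\supp f)}$; under the honest hypothesis $g\in C_0(\{f\neq 0\})$ both your argument and the paper's go through without change.
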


\begin{proof}
    To show this, we will find a sequence \(g_n\in C_b(\bR)\) such that \(g_n\to g\) and \(g_n = b_n f\) for some \(b_n\in C_b(\bR)\). Then \(g_n(D)\in \cK(\pE)\) converges to \(g(D)\) in norm showing that \(g(D)\in \cK(\pE)\).

    To construct these functions it is easier to transfer the problem to \(C([0,1])\) instead as this is equivalent. Consider \(f\in C([0,1])\) and \(g\in C_0(\interior{(\supp f)})\). Let \(g_n\in C_c(\interior{(\supp f)})\) be a sequence such that \(g_n\to g\). Define functions \(b_n\) as \(b_n(x) = \frac{g_n(x)}{f(x)}\) if \(x\in \supp g_n\) and \(b_n(x)=0\) otherwise. Then \(b_n\) is continuous and \(\abs{b_n}\leq \norm{g_n}\sup_{x\in \supp g_n} \abs{f(x)}^{-1}\) which is bounded since \(\supp g_n\) is compact. In particular, \(g_n = b_n f\) and \(b_n\in C([0,1])\), and we are done.
\end{proof}

\begin{lemma}
    \label{thm:chi_exists}
    Let \(D\) be a \(B\)-Fredholm self-adjoint operator on a Hilbert \(B\)-module \(\pE\). Then there exists an odd real-valued non-decreasing function \(\chi\in C_b(\bR)\) with well-defined limits at \(\pm \infty\) such that \(\chi(D)^2 - 1\in \cK(\pE)\).

    Furthermore, for any two such functions \(\chi\) and \(\chi'\), \(\chi(D)-\chi'(D)\in \cK(\pE)\).
\end{lemma}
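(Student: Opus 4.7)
The plan is to reduce to the bounded transform $F_D \coloneq D(1+D^2)^{-\frac{1}{2}}$ and run the argument in the Calkin algebra $\cL(\pE)/\cK(\pE)$. The key subtlety is that the naïve choice $\chi(x) = x/\sqrt{1+x^2}$ does \emph{not} work: it would force $\chi(D)^2-1 = -(1+D^2)^{-1}$ to be compact, which is strictly stronger than $B$-Fredholmness (it is compact resolvent). The point of the lemma is precisely to flatten $\chi$ to $\pm 1$ on a neighborhood of infinity of the spectrum of $F_D$, exploiting the spectral gap of $\pi(F_D)$ at $0$ provided by Fredholmness.

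Since $D$ is self-adjoint regular and $B$-Fredholm, $F_D\in\cL(\pE)$ is bounded self-adjoint with $\norm{F_D}\leq 1$ and is itself $B$-Fredholm, so its image $\pi(F_D)$ in the Calkin algebra is self-adjoint and invertible. Hence $\spec(\pi(F_D))\subseteq[-1,-\epsilon]\cup[\epsilon,1]$ for some $\epsilon\in(0,1)$. I would pick any odd non-decreasing $g\in C([-1,1])$ with $g(t)=\sgn(t)$ for $\abs{t}\geq\epsilon/2$, and set $\chi(x)\coloneq g(x/\sqrt{1+x^2})$. Then $\chi$ is odd, non-decreasing, continuous on $\bR$, with limits $\pm 1$ at $\pm\infty$. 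Functoriality of continuous functional calculus under composition (applied via the homeomorphism $x\mapsto x/\sqrt{1+x^2}$ from the two-point compactification of $\bR$ onto $[-1,1]$) gives $\chi(D)=g(F_D)$, so $\chi(D)^2-1=(g^2-1)(F_D)$. The function $g^2-1\in C([-1,1])$ vanishes on $[-1,-\epsilon/2]\cup[\epsilon/2,1]$, which contains $\spec(\pi(F_D))$, so $(g^2-1)(\pi(F_D))=\pi((g^2-1)(F_D))=0$, i.e.\ $\chi(D)^2-1\in\cK(\pE)$.

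For the uniqueness part, let $\chi,\chi'$ both satisfy the conclusion. Their difference $\chi-\chi'\in C_b(\bR)$ has limit $0$ at $\pm\infty$. Oddness gives $\chi(0)=\chi'(0)=0$, so by non-decreasingness both $\chi(t)$ and $\chi'(t)$ share the sign of $t$. Therefore, whenever $\chi(t)^2=\chi'(t)^2=1$ simultaneously, both must equal $\sgn(t)$ and hence coincide. Contrapositively, $\chi(t)\neq\chi'(t)$ implies $\chi(t)^2\neq 1$ or $\chi'(t)^2\neq 1$, whence
\[
    \supp(\chi-\chi')\subseteq\supp\bigl((1-\chi^2)+(1-\chi'^2)\bigr).
\]
The operator $\bigl((1-\chi^2)+(1-\chi'^2)\bigr)(D)=-(\chi(D)^2-1)-(\chi'(D)^2-1)$ is compact by hypothesis on both functions, so \autoref{thm:perturb_funccalc_compact} applied with $f=(1-\chi^2)+(1-\chi'^2)$ and $g=\chi-\chi'$ yields $(\chi-\chi')(D)\in\cK(\pE)$. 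The main obstacle is really just the identification $\chi(D)=g(F_D)$: once this composition rule is in hand, everything else reduces to the $C^*$-algebraic observation that a self-adjoint element invertible modulo compacts has a spectral gap in the Calkin algebra.
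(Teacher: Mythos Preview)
Your existence argument is correct and close in spirit to the paper's: both exploit that $\pi(F_D)$ is invertible in the Calkin algebra, hence has a spectral gap at $0$. The paper packages this as ``there is a neighborhood $U$ of $0$ with $\phi(D)\in\cK(\pE)$ for $\supp\phi\subseteq U$'' and then writes down the explicit normalizer $\chi(x)=x(\phi(x)+x^2)^{-1/2}$; your route via $\chi=g\circ F_D$ is equivalent and perhaps cleaner.

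For uniqueness there is a small but genuine gap. Your deduction
\[
\{\chi\neq\chi'\}\subseteq\{\chi^2\neq1\}\cup\{\chi'^2\neq1\}
\]
is fine, but the step from there to $\supp(\chi-\chi')\subseteq\supp\bigl((1-\chi^2)+(1-\chi'^2)\bigr)$ uses that $1-\chi^2$ and $1-\chi'^2$ are both nonnegative (so that their sum vanishes only where both do). Nothing in the hypotheses forces $\lvert\chi\rvert\le1$: the lemma only asks that $\chi$ be odd, non-decreasing, bounded with limits at $\pm\infty$, and $\chi(D)^2-1\in\cK(\pE)$, and for suitable $D$ (e.g.\ when $\cK(\pE)=\cL(\pE)$, or more generally when $\spec(\pi(F_D))$ avoids a neighborhood of $\pm1$) one can have $\lim_{x\to\infty}\chi(x)>1$. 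Your preliminary remark that $\chi-\chi'$ has limit $0$ at $\pm\infty$ is likewise unjustified, though you do not actually use it. The fix is trivial: take instead $f=(\chi^2-1)^2+(\chi'^2-1)^2$, which is nonnegative, has $f(D)\in\cK(\pE)$, and satisfies $\supp f=\supp(\chi^2-1)\cup\supp(\chi'^2-1)$, so \autoref{thm:perturb_funccalc_compact} applies.

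The paper sidesteps this issue by triangulating: it shows that every admissible $\chi$ differs from a fixed reference function $\psi(x)=x(\phi(x)+x^2)^{-1/2}$ by something supported in $\supp(\chi^2-1)$, applying \autoref{thm:perturb_funccalc_compact} with $f=\chi^2-1$ alone (no sum, hence no sign issue). Your direct comparison is slicker once patched; the paper's detour through a common reference is slightly more robust out of the box.
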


\begin{proof}
    Since \(D\) is \(B\)-Fredholm, \(D(1+D^2)^{-\frac{1}{2}}\) is invertible in the Calkin algebra \(\cL(\pE)/\cK(\pE)\). Hence, there is a neighborhood \(U\) of zero such that \(\phi(D)\in \cK(\pE)\) for any \(\phi\in C_c(\bR)\) with \(\supp \phi\subseteq U\). For such a \(\phi\in C_c(\bR)\) that is even, non-decreasing away from zero and \(\phi(0)>0\), \(\chi(x) =  x(\phi(x)+x^2)^{-\frac{1}{2}}\) satisfies the requirement since \(\chi(x) = \sgn x\) outside \(U\).

    Now take any \(\chi\in C_b(\bR)\) satisfying the requirements. Note that \(\chi(x)=\sgn x\) outside \(\supp (\chi^2 - 1)\). Let \(U\) be a neighborhood of zero such that \(\chi^2 \neq 1\) on \(U\) and there is a \(\phi\in C_c(\bR)\) that is even, non-decreasing away from zero, \(\phi(0)>0\) and \(\supp \phi \subseteq U\). Consider the function \(g(x)= x(\phi(x)+x^2)^{-\frac{1}{2}} - \chi(x)\). Then \(\supp g \subseteq \supp (\chi^2 - 1) \cup U \subseteq \supp (\chi^2 - 1)\), and hence \(g(D)\in \cK(\pE)\) by \autoref{thm:perturb_funccalc_compact}. Since the same \(\phi\) can be chosen for any two such \(\chi\), this is enough.
\end{proof}

The other way to obtain a bounded cycle from a truly unbounded Kasparov module is following the construction of the bounded cycle in \cite{Bunke_1995} using invertibility at infinity, which now with our the characterization of Fredholmness in \autoref{thm:characterization_of_Fredholm_abstract} works for any truly unbounded Kasparov module. 

\begin{proposition}
    \label{thm:gives_bounded_cycles}
    For a truly unbounded Kasparov \((A,B)\)-module \((\cA, \pE, D)\) there exists a self-adjoint \(k\in \cL(\pE)\cap\cK(\Dom D, \pE)\) such that \(k+D^2\) is invertible. Define
    \[
        F_{D,k} \coloneq D(k + D^2)^{-\frac{1}{2}},
    \]
    then \((\pE, F_{D,k})\) defines a bounded Kasparov \((A,B)\)-module and the class \([(\pE,F_{D,k})]\in KK^*(A,B)\) is independent of choice of such \(k\).
\end{proposition}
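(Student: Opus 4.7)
The plan is to show that any admissible $k$ yields an $F_{D,k}$ that is a compact perturbation of the reference cycle $\chi(D)$ of \autoref{thm:Wahl_gives_bounded_cycles}, so that both the bounded Kasparov cycle conditions and independence of $k$ fall out automatically. Existence of $k$ is immediate: since $D$ is $B$-Fredholm, \autoref{thm:compinv} supplies a positive $k \in \cL(\pE)$ with $k \Dom D \subseteq \Dom D$, $k(1+D^2)^{-\frac{1}{2}} \in \cK(\pE)$, and $k + D^2$ invertible (hence bounded below by some $\epsilon > 0$), and the inequality $D^2 \leq k + D^2$ immediately gives $\norm{F_{D,k}} \leq 1$.

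For the reference cycle I will use the construction behind \autoref{thm:chi_exists}: pick a nonnegative $\phi \in C_c(\bR)$ with $\phi(0) > 0$ whose support lies in a neighborhood of $0$ on which $f(D) \in \cK(\pE)$ for every $f \in C_c(\bR)$ supported there (this neighborhood exists because $D$ is $B$-Fredholm). Then $\phi(D)$ is itself an admissible choice, it commutes with $D$, and
\[
F_{D,\phi(D)} = D(\phi(D) + D^2)^{-\frac{1}{2}} = \chi(D), \qquad \chi(x) = x(\phi(x) + x^2)^{-\frac{1}{2}},
\]
which is precisely the function produced by \autoref{thm:chi_exists}, so by \autoref{thm:Wahl_gives_bounded_cycles} the pair $(\pE, \chi(D))$ is a bounded Kasparov $(A,B)$-module.

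The main technical step is the compact-perturbation identity $F_{D,k} - \chi(D) \in \cK(\pE)$ for arbitrary admissible $k$. Setting $A = k + D^2$ and $B = \phi(D) + D^2$, the purely algebraic identity $A^{-1/2} - B^{-1/2} = A^{-1/2}(B^{1/2} - A^{1/2}) B^{-1/2}$ (no commutativity needed) rewrites the difference as
\[
F_{D,k} - \chi(D) = F_{D,k}\,(B^{1/2} - A^{1/2})\,B^{-\frac{1}{2}},
\]
reducing everything to the compactness of $B^{1/2} - A^{1/2}$. I will extract this from the standard integral formula
\[
B^{1/2} - A^{1/2} = \frac{1}{\pi}\int_0^\infty \sqrt{t}\,(A+t)^{-1}(\phi(D) - k)(B+t)^{-1}\,dt,
\]
whose integrand is compact because $(\phi(D) - k)(B+t)^{-1}\colon \pE \to \pE$ factors through $\Dom D$ and $\phi(D) - k \in \cK(\Dom D, \pE)$, while the norm bound $\sqrt{t}\,(\epsilon+t)^{-2}$ (with $\epsilon$ a common lower spectral bound of $A$ and $B$) yields a norm-convergent integral sitting inside $\cK(\pE)$.

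The conclusion is then automatic: $(\pE, F_{D,k})$ is a compact perturbation of the bounded Kasparov cycle $(\pE, \chi(D))$, hence is itself a bounded Kasparov cycle representing the class $[(\pE, \chi(D))] \in KK^*(A,B)$, which manifestly does not depend on $k$. The main obstacle I anticipate is verifying compactness of $B^{1/2} - A^{1/2}$ carefully: one must confirm that each integrand is genuinely compact (leveraging $k(1+D^2)^{-\frac{1}{2}} \in \cK(\pE)$ and $\phi(D) \in \cK(\pE)$) and that the majorant $\sqrt{t}(\epsilon+t)^{-2}$ is integrable on both $(0,1)$ and $(1,\infty)$, after which the remainder of the argument is bounded/compact bookkeeping.
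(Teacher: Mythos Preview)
Your approach is correct and takes a genuinely different, more economical route than the paper. The paper verifies each bounded-Kasparov condition for $F_{D,k}$ from scratch: \autoref{thm:different_k_compact_pretubation} gives $F_{D,k} - F_{D,k'} \in \cK(\pE)$, \autoref{thm:selfadjoint_up_to_compact} gives $F_{D,k}^* - F_{D,k} \in \cK(\pE)$, a further lemma gives $F_{D,k}^2 - 1 \in \cK(\pE)$, and the rather involved \autoref{thm:commutator_compact} handles $[F_{D,k},a] \in \cK(\pE)$, each by its own resolvent-integral manipulation. You instead observe that the special choice $k=\phi(D)$ already gives $F_{D,\phi(D)}=\chi(D)$, which is a bounded cycle by \autoref{thm:Wahl_gives_bounded_cycles}, and then reduce everything to the single compactness statement $F_{D,k}-\chi(D)\in\cK(\pE)$ via the factorization through $B^{1/2}-A^{1/2}$. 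All three cycle conditions and independence of $k$ then follow simultaneously, and you bypass the commutator computation entirely. The paper actually records the identification $F_{D,\phi(D)}=\chi(D)$ in the paragraph following \autoref{thm:gives_bounded_cycles}, but only \emph{a posteriori} to compare the two constructions; your insight is to make it the engine of the proof. The trade-off is that your argument depends on \autoref{thm:Wahl_gives_bounded_cycles} being established first, whereas the paper's proof of \autoref{thm:gives_bounded_cycles} is self-contained.

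Two minor points. The inequality $\|F_{D,k}\|\le 1$ via $D^2\le k+D^2$ uses $k\ge 0$, which holds for the specific $k$ supplied by \autoref{thm:compinv} but not for an arbitrary self-adjoint $k$ as in the statement; this is harmless since you only need $F_{D,k}\in\cL(\pE)$, which follows from $A^{-1/2}\colon\pE\to\Dom D$ and $D\colon\Dom D\to\pE$ both being bounded. Second, both your argument and the paper's tacitly assume $k+D^2$ is strictly positive (not merely invertible self-adjoint) so that $(k+D^2)^{-1/2}$ is defined via functional calculus; every $k$ actually produced satisfies this, but it is worth stating.
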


For differential operators, the construction of \(F_{D,k}\) is very geometric since \(k\) in \autoref{thm:gives_bounded_cycles} can be chosen to be an element of \(C_c^\infty(M)\) using \autoref{thm:characterization_of_Fredholm_geometric}. Note however that \(k\) does not have to have any relation to the algebra \(A\) or subalgebra \(\cA\). In particular, some cycles in \autoref{thm:Wahl_gives_bounded_cycles} can be constructed using a \(k\) as in \autoref{thm:gives_bounded_cycles}. Specifically, we can let \(k=\phi(D)\) for some \(\phi\) as in the proof of \autoref{thm:chi_exists} and let \(\chi(x)=x(\phi(x) + x^2)^{-\frac{1}{2}}\). This shows that all bounded cycles in this section give the same class in \(KK\)-theory. In fact, they are compact perturbations of each other. 

Note that if \(T\in \cL(\pE)\) is a positive element such that \(T+D^*D\) is invertible, then \(\ind D = \ind D(T+D^*D)^{-\frac{1}{2}}\) which shows that all bounded cycles in this section have the same index as \(D\).

In the case when \(B = \bC\) and we are considering operators on Hilbert spaces, then the sign function (defined as \(\sgn(0)=0\)) is continuous on the spectrum of a Fredholm operator. In particular, \(\sgn(D)=D(P_{\ker D} + D^2)^{-\frac{1}{2}}\) defines a bounded cycle as seen by using \(k=P_{\ker D}\) in \autoref{thm:gives_bounded_cycles}. Such bounded cycles have appeared in previous literature, for example \cite[Proposition 3.1.2]{Lafforgue_2002}.

The proof of \autoref{thm:gives_bounded_cycles} is essentially the same as in \cite{Bunke_1995}, but we will reprove it here for the sake of completeness and to clarify that we do not use anything extra. Firstly, we will show that for self-adjoint \(k,k'\in \cL(\pE)\cap \cK(\Dom D, \pE)\) such that \(k+D^2\) and \(k'+D^2\) are invertible that 
\[
    F_{D,k} - F_{D,k'} \in \cK(\pE).
\]
This lets us restrict attention to \(k\in \CompInv(D)\) defined in \eqref{eq:compinv_def} which additionally requires that \(k \Dom D\subseteq \Dom D\) and show that
\[
    F_{D,k}^2 - 1, F_{D,k}^* - F_{D,k}, [F_{D,k}, a]
\]
are compact operators for \(k\in \CompInv(D)\). 

As in the usual fashion in these type of proofs we will utilize the resolvent integral
\[
    T^{-\frac{1}{2}} = \frac{2}{\pi} \int_0^\infty(\lambda^2 + T)^{-1} \, \rd\lambda
\]
if \(T\) is an invertible positive unbounded regular operator, a method originating from \cite{Baaj_Julg_1983}. For \(x\in \pE\) and \(y\in \Dom D\) we see that
\[
    \begin{split}
    \braket{F_{D,k}x, y} 
    &= \braket{x, (k + D^2)^{-\frac{1}{2}}D y} \\
    &= \braket{x, \frac{2}{\pi} \int_0^\infty (\lambda^2 + k + D^2)^{-1} Dy \, \rd\lambda} \\
    &= \braket{\frac{2}{\pi} \int_0^\infty D(\lambda^2 + k + D^2)^{-1}x \, \rd\lambda,  y } \\
    \end{split}
\]
from which we obtain that
\[
    \label{eq:pointwise_integral_formula}
    F_{D,k} = \frac{2}{\pi} \int_0^\infty D (\lambda^2 + k + D^2)^{-1} \, \rd\lambda
\]
holds pointwise on \(\pE\). To show that an expression with \(F_{D,k}\) is compact we will look at the pointwise integral formula in \eqref{eq:pointwise_integral_formula} on a dense subspace and massage it until we can conclude that it is a norm convergent integral over compact operators. This massaging will in large be done using the following lemma.

\begin{lemma}
    \label{thm:commutator_with_ish_resolvent}
    Let \(D\) be a self-adjoint regular operator on \(\pE\) and \(T\in \cL(\pE)\) a self-adjoint element such that \(T + D^2\) is invertible and \(T\Dom D \subseteq \Dom D\), then 
    \[
        (DR_T)^* = DR_T + R_T[D,T]R_T
    \]
    where \(R_T = (T + D^2)^{-1}\). In particular, \([D,R_T] = -R_T[D,T]R_T\) on \(\Dom D\).
\end{lemma}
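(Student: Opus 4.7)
The plan is to prove the ``in particular'' identity $[D,R_T]y = -R_T[D,T]R_T y$ on $\Dom D$ by a direct algebraic computation, and then deduce the adjoint formula using self-adjointness of $D$ and $R_T$ together with a density argument.

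Since $T\in\cL(\pE)$ preserves $\Dom D$, \autoref{thm:closed_graph_thm_hilbert_modules} (applied with $D_1=D_2=D$) yields that $T$ restricts to a bounded operator $\Dom D\to \Dom D$. The resolvent $R_T$ is bounded and self-adjoint with range in $\Dom(T+D^2)=\Dom D^2$, and using self-adjointness of $D$ one verifies that $DR_T$ is bounded via $\norm{DR_Tx}^2=\braket{R_Tx,D^2R_Tx}$, noting that $D^2R_T=\id-TR_T$ is bounded. The same facts show $R_T[D,T]R_T\in\cL(\pE)$: the operator $DTR_T$ factors as the composition of bounded maps $\pE\xrightarrow{R_T}\Dom D\xrightarrow{T}\Dom D\xrightarrow{D}\pE$, and $TDR_T$ as $\pE\xrightarrow{R_T}\Dom D\xrightarrow{D}\pE\xrightarrow{T}\pE$.

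The crucial step is a domain boost: for $y\in \Dom D$, I would observe that $D^2 R_Ty = y - TR_Ty$ actually lies in $\Dom D$, since $y\in\Dom D$ by assumption and $TR_Ty\in\Dom D$ (because $R_Ty\in\Dom D^2\subseteq \Dom D$). Hence $R_Ty\in\Dom D^3$ and $DR_Ty\in\Dom D^2$, so $(T+D^2)$ may legitimately act on $DR_Ty$. Expanding,
\[
    (T+D^2)DR_Ty \;=\; TDR_Ty + D(D^2R_Ty) \;=\; TDR_Ty + D(y - TR_Ty) \;=\; Dy - [D,T]R_Ty,
\]
while $(T+D^2)R_TDy = Dy$ since $(T+D^2)R_T=\id$ on $\pE$. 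Subtracting and applying $R_T$ on the left, which is legitimate because $R_TDy$ and $DR_Ty$ both lie in $\Dom D^2$ where $R_T(T+D^2)=\id$, yields the second identity $R_TDy - DR_Ty = R_T[D,T]R_Ty$.

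For the first identity I would note that self-adjointness of $D$ together with $R_T^*=R_T$ and the general rule $(AB)^*\supseteq B^*A^*$ for bounded $B$ gives $(DR_T)^*\supseteq R_TD$; thus $(DR_T)^*y = R_TDy = DR_Ty + R_T[D,T]R_Ty$ for every $y\in\Dom D$. Since both sides of the desired equality are bounded operators on $\pE$ that agree on the dense subspace $\Dom D$, they coincide on all of $\pE$. The main obstacle is the domain boost: the formal manipulation $[D,R_T] = -R_T[D,T+D^2]R_T$ is natural, but it requires justifying that $(T+D^2)$ can be applied to $DR_Ty$, which is precisely what the observation $D^2R_Ty = y - TR_Ty\in\Dom D$ delivers using only the standing hypothesis $T\Dom D\subseteq \Dom D$.
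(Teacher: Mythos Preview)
Your proof is correct but proceeds in the reverse order from the paper. The paper computes the adjoint formula directly by an inner-product manipulation valid for all $u,v\in\pE$: inserting $(T+D^2)R_T=\id$ and using self-adjointness of $D$ to move one factor of $D$ across, it obtains $\braket{u,DR_Tv}=\braket{(DR_T+R_T[D,T]R_T)u,v}$ in four lines; the commutator identity on $\Dom D$ then drops out as the ``in particular''. Your route instead establishes $[D,R_T]=-R_T[D,T]R_T$ on $\Dom D$ first via the domain boost $R_T(\Dom D)\subseteq\Dom D^3$, and then lifts to the adjoint statement by density. Both arguments rely on the same ingredients ($T\Dom D\subseteq\Dom D$, self-adjointness of $D$ and $R_T$), but the paper's avoids the $\Dom D^3$ step entirely and never needs the density extension since the inner-product identity is already global. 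One small wording issue in your last paragraph: you assert that ``both sides are bounded operators on $\pE$'' before $(DR_T)^*$ is known to be everywhere defined; more precisely, the density argument you outline shows that $DR_T$ is adjointable with the indicated adjoint, which then a posteriori makes $(DR_T)^*$ bounded. This is cosmetic and your argument goes through.
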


\begin{proof}
    For \(u, v\in \pE\) we have that
    \[
        \begin{split}
            \braket{u, DR_T v} 
            &= \braket{(T + D^2)R_Tu, DR_T v} \\
            &= \braket{DR_Tu, D^2R_T v} + \braket{TR_Tu, DR_T v} \\
            &= \braket{DR_Tu, v} + \braket{DR_Tu, -TR_T v} + \braket{TR_Tu, DR_T v} \\
            &= \braket{(DR_T + R_T[D,T]R_T)u, v}, \\
        \end{split}
    \]
    and we are done.
\end{proof}

\begin{lemma}
    \label{thm:different_k_compact_pretubation}
    For a self-adjoint regular operator \(D\) on \(\pE\) and self-adjoint \(k, k'\in \cL(\pE)\cap \cK(\Dom D, \pE)\) such that \(k+D^2\) and \(k'^2 + D^2\) are invertible
    \[
        F_{D, k} - F_{D, k'} \in \cK(\pE).
    \]
\end{lemma}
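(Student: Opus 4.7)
The plan is to express $F_{D,k} - F_{D,k'}$ as a norm-convergent integral of compact operators on $\pE$, combining the pointwise integral formula established just before the lemma with the second resolvent identity. Concretely, I would start from the observation that, for each $\lambda \geq 0$,
\[
    (\lambda^2 + k + D^2)^{-1} - (\lambda^2 + k' + D^2)^{-1} = (\lambda^2 + k + D^2)^{-1}(k' - k)(\lambda^2 + k' + D^2)^{-1}
\]
as an identity in $\cL(\pE)$ (both operators being invertible and having domain $\Dom D^2$ since $k, k' \in \cL(\pE)$). Left-multiplying by $D$ and integrating against $\tfrac{2}{\pi}\,d\lambda$ then gives, at least as a pointwise identity on $\pE$,
\[
    F_{D,k} - F_{D,k'} = \frac{2}{\pi}\int_0^\infty D(\lambda^2 + k + D^2)^{-1}(k'-k)(\lambda^2 + k' + D^2)^{-1}\,d\lambda.
\]

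The next step is to verify that each integrand lies in $\cK(\pE)$ and that the integral converges in the operator norm. For compactness, note that $(\lambda^2 + k' + D^2)^{-1}$ maps $\pE$ into $\Dom D^2 \subseteq \Dom D$ and restricts to an element of $\cL(\pE, \Dom D)$ by \autoref{thm:closed_graph_thm_hilbert_modules}; the operator $k' - k$ is compact as a map $\Dom D \to \pE$ by hypothesis; and $D(\lambda^2 + k + D^2)^{-1}$ is bounded on $\pE$ by functional calculus. Composing a bounded, a compact, and a bounded operator yields a compact operator for every $\lambda \geq 0$. For norm convergence I would use the standard spectral estimates $\norm{D(\lambda^2 + k + D^2)^{-1}}_{\pE \to \pE} \lesssim (1 + \lambda)^{-1}$ and $\norm{(\lambda^2 + k' + D^2)^{-1}}_{\pE \to \Dom D} \lesssim (1 + \lambda)^{-1}$, together with the fixed bound $\norm{k' - k}_{\Dom D \to \pE} < \infty$. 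Multiplying these yields an $O((1+\lambda)^{-2})$ bound on the integrand in operator norm, which is integrable and shows that the integral converges absolutely in $\cL(\pE)$; a norm limit of compact operators is compact, giving $F_{D,k} - F_{D,k'} \in \cK(\pE)$.

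The main obstacle I anticipate is the rigorous passage from the pointwise integral formula (as already stated above the lemma) to a genuine Bochner-type integral in $\cL(\pE)$. In particular, one must check that the integrand is strongly measurable and dominated in operator norm by an integrable scalar function, and that the resulting norm-convergent integral really does coincide, pointwise on $\pE$, with the difference $F_{D,k} - F_{D,k'}$ obtained from the formula $(k+D^2)^{-1/2} = \tfrac{2}{\pi}\int_0^\infty (\lambda^2 + k + D^2)^{-1}\,d\lambda$ followed by left-multiplication by $D$. Once this justification is in hand, the compactness of each integrand together with the $O((1+\lambda)^{-2})$ estimate completes the proof.
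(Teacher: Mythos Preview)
Your proposal is correct and follows essentially the same route as the paper: both feed the second resolvent identity into the pointwise integral formula to write $F_{D,k} - F_{D,k'}$ as a norm-convergent integral of compact operators, the paper simply leaving your $(1+\lambda)^{-2}$ estimate and the Bochner justification implicit. The one imprecision is your appeal to ``functional calculus'' to bound $D(\lambda^2 + k + D^2)^{-1}$, since $k$ need not commute with $D$; the required $(1+\lambda)^{-1}$ bound follows instead from the form identity $\braket{Dy,Dy} + \lambda^2\braket{y,y} + \braket{ky,y} = \braket{x,y}$ for $y = (\lambda^2 + k + D^2)^{-1}x$.
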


\begin{proof}
    Pointwise on \(\pE\) we have that
    \[
        \begin{split}
            F_{D, k} - F_{D, k'}
            &= \frac{2}{\pi} \int_0^\infty D((\lambda^2 + k + D^2)^{-1} - (\lambda^2 + k' + D^2)^{-1}) \, \rd\lambda \\
            &= \frac{2}{\pi} \int_0^\infty D(\lambda^2 + k + D^2)^{-1}(k - k')(\lambda^2 + k' + D^2)^{-1} \, \rd\lambda \\
        \end{split}
    \]
    where the integrand is compact since \(k (1 + D^2)^{-1}\) and \(k' (1 + D^2)^{-1}\) are compact.
\end{proof}

An important note is that in the case when \(D\) has locally compact resolvent, that is \(a(1+D^2)^{-\frac{1}{2}}\in \cK(\pE)\), then any \(F_{D, k}\) give rise to the same \(KK\)-class as the usual bounded transform \(F_{D,1}\) which can be shown similarly as \autoref{thm:different_k_compact_pretubation}.

\begin{lemma}
    \label{thm:selfadjoint_up_to_compact}
    For a self-adjoint regular operator \(D\) on \(\pE\) and a \(k\in \CompInv(D)\)
\[
    F_{D, k} - F_{D, k}^* \in \cK(\pE).
\]
\end{lemma}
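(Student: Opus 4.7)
The plan is to use the pointwise resolvent integral representation combined with \autoref{thm:commutator_with_ish_resolvent} to write $F_{D,k}^* - F_{D,k}$ as a norm-convergent integral of compact operators. Setting $R = R(\lambda) = (\lambda^2 + k + D^2)^{-1}$, I would first verify that for $x \in \Dom D$ both $F_{D,k}x = \tfrac{2}{\pi}\int_0^\infty DRx\,\rd\lambda$ and $F_{D,k}^* x = \tfrac{2}{\pi}\int_0^\infty RDx\,\rd\lambda$ hold as absolutely convergent integrals in $\pE$ (using $\|RDx\| \leq \|Dx\|/(\lambda^2+c)$ with $c > 0$ the spectral gap of $k+D^2$, and an analogous bound for $DR$ after applying the commutator identity below), so that
\[
    (F_{D,k}^* - F_{D,k})x = \frac{2}{\pi}\int_0^\infty (RD - DR)x\,\rd\lambda \qquad (x \in \Dom D).
\]
Applying \autoref{thm:commutator_with_ish_resolvent} with $T = \lambda^2 + k$ (its hypotheses hold: $T$ is self-adjoint, $T+D^2$ is invertible by $k\in \CompInv(D)$, and $T\Dom D \subseteq \Dom D$) gives $(DR)^* = DR + R[D,k]R$, and since $(DR)^* u = RDu$ on $\Dom D$, this rewrites as $RD - DR = R[D,k]R$ on $\Dom D$. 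Hence
\[
    (F_{D,k}^* - F_{D,k})x = \frac{2}{\pi}\int_0^\infty R[D,k]R\, x\,\rd\lambda \qquad (x \in \Dom D).
\]

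Next I would show that each integrand is compact, via the splitting $R[D,k]R = RDkR - RkDR$. The key factorization is $kR = k(1+D^2)^{-\tfrac{1}{2}} \cdot (1+D^2)^{\tfrac{1}{2}}R$: the first factor lies in $\cK(\pE)$ by $k \in \CompInv(D)$, and the second is in $\cL(\pE)$ uniformly in $\lambda$ (a routine computation using $D^2R = 1 - (\lambda^2+k)R$ and $\|R\| \leq 1/(\lambda^2+c)$). Thus $kR \in \cK(\pE)$, and by self-adjointness of $k$ and $R$ also $Rk = (kR)^* \in \cK(\pE)$. Then $RkDR = (Rk)(DR)$ is compact since $DR \in \cL(\pE)$ with $\|DR\|^2 = \|RD^2R\| \leq \|R\|$, and $(RDkR)^* = RkDR$ being compact forces $RDkR \in \cK(\pE)$ as well. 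Hence $R[D,k]R \in \cK(\pE)$.

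Finally, I would verify norm-convergence of the operator-valued integral. From $k\Dom D \subseteq \Dom D$ and \autoref{thm:closed_graph_thm_hilbert_modules} one obtains $k \in \cL(\Dom D)$, so $[D,k] = Dk - kD$ extends to a bounded operator $\Dom D \to \pE$. Combined with $\|R\| \leq 1/(\lambda^2+c)$ and $\|DR\| \leq 1/\sqrt{\lambda^2+c}$, this yields $\|R[D,k]R\| \lesssim 1/\lambda^3$ for large $\lambda$ and a uniform bound near $\lambda = 0$ (by invertibility of $k + D^2$), so $\tfrac{2}{\pi}\int_0^\infty R[D,k]R\,\rd\lambda$ converges in operator norm to some $K$. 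By norm-closedness of $\cK(\pE)$ we have $K \in \cK(\pE)$, and $Kx = (F_{D,k}^* - F_{D,k})x$ for $x \in \Dom D$; as $F_{D,k}^* - F_{D,k}$ is already in $\cL(\pE)$ (recall $\|F_{D,k}\| \leq 1$), density of $\Dom D$ gives $K = F_{D,k}^* - F_{D,k} \in \cK(\pE)$.

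The main obstacle is establishing compactness of $RDkR$ without tangling with the unbounded quantity $Dk$; the trick of passing to the adjoint $(RDkR)^* = RkDR$ and exploiting the factorization of $kR$ converts it into a clean compact-times-bounded composition and thereby sidesteps the issue.
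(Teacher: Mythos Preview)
Your proposal is correct and follows essentially the same route as the paper: write \(F_{D,k}^* - F_{D,k}\) pointwise on \(\Dom D\) as the resolvent integral \(\frac{2}{\pi}\int_0^\infty (RD-DR)\,\rd\lambda\), apply \autoref{thm:commutator_with_ish_resolvent} to rewrite the integrand as \(R[D,k]R\), and then check compactness of the integrand and norm convergence of the integral. The paper's version is a terse four-line sketch that simply asserts compactness of the integrand from \(k(1+D^2)^{-1}\in\cK(\pE)\); your splitting \(R[D,k]R = RDkR - RkDR\) together with the adjoint trick \((RDkR)^* = RkDR\) and the factorization \(kR = k(1+D^2)^{-1/2}\cdot(1+D^2)^{1/2}R\) is exactly the kind of detail the paper suppresses.
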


\begin{proof}
    Pointwise on \(\Dom D\) we have that
    \[
        \begin{split}
            F_{D, k} - F_{D, k}^*
            &= \frac{2}{\pi} \int_0^\infty [D, (\lambda^2 + k + D^2)^{-1}] \, \rd\lambda \\
            &= - \frac{2}{\pi} \int_0^\infty (\lambda^2 + k + D^2)^{-1}[D,k] (\lambda^2 + k + D^2)^{-1} \, \rd\lambda
        \end{split}
    \]
    using \autoref{thm:commutator_with_ish_resolvent} where the integrand is compact since \(k (1 + D^2)^{-1}\) is compact and the integral is norm convergent.
\end{proof}

\begin{lemma}
    For a self-adjoint regular operator \(D\)  on \(\pE\) and a \(k\in \CompInv(D)\)
    \[
        F_{D, k}^2 - 1 \in \cK(\pE).
    \]
\end{lemma}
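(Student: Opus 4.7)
The plan is to decompose
\[
    F_{D,k}^2 - 1 = (F_{D,k}^* F_{D,k} - 1) + (F_{D,k} - F_{D,k}^*) F_{D,k}.
\]
Since \(F_{D,k} = D(k+D^2)^{-\frac{1}{2}}\) is bounded (as \(\norm{Dx}^2 \leq \braket{x, (k+D^2)x}\) for \(x \in \Dom D^2\), using \(k\geq 0\)), and \(F_{D,k} - F_{D,k}^*\) is compact by the preceding lemma, the second summand is compact. Hence it suffices to show that \(F_{D,k}^* F_{D,k} - 1 \in \cK(\pE)\).

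I would first establish the operator identity
\[
    F_{D,k}^* F_{D,k} = 1 - (k + D^2)^{-\frac{1}{2}} k (k + D^2)^{-\frac{1}{2}}.
\]
Heuristically this is \((k+D^2)^{-\frac{1}{2}}\bigl((k+D^2) - k\bigr)(k+D^2)^{-\frac{1}{2}}\). To justify rigorously, compute \(\norm{F_{D,k} x}^2 = \braket{y, D^2 y}\) with \(y = (k+D^2)^{-\frac{1}{2}} x\). For \(x\) in the dense subspace \(\Ran(k+D^2)^{\frac{1}{2}}\), one has \(y \in \Dom D^2\), so \(D^2 y = (k+D^2)y - ky\) and the expression rearranges to \(\norm{x}^2 - \braket{x, (k+D^2)^{-\frac{1}{2}} k (k+D^2)^{-\frac{1}{2}} x}\). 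Polarization and continuity then extend the identity to all of \(\pE\).

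It remains to show that \((k+D^2)^{-\frac{1}{2}} k (k+D^2)^{-\frac{1}{2}}\) is compact. Since \(k \geq 0\) is bounded and \(k+D^2\) is invertible (so bounded below by some \(\epsilon > 0\)), one has the operator inequality \(1 + D^2 \leq C (k + D^2)\) for \(C = 1 + (1 + \norm{k})/\epsilon\). This shows that \((1+D^2)^{\frac{1}{2}}(k+D^2)^{-\frac{1}{2}}\) extends to a bounded operator, whose adjoint is the extension of \((k+D^2)^{-\frac{1}{2}}(1+D^2)^{\frac{1}{2}}\). Writing
\[
    (k+D^2)^{-\frac{1}{2}} k (k+D^2)^{-\frac{1}{2}} = \bigl[(k+D^2)^{-\frac{1}{2}}(1+D^2)^{\frac{1}{2}}\bigr] \bigl[(1+D^2)^{-\frac{1}{2}} k (1+D^2)^{-\frac{1}{2}}\bigr] \bigl[(1+D^2)^{\frac{1}{2}}(k+D^2)^{-\frac{1}{2}}\bigr]
\]
exhibits it as a product of bounded operators whose middle factor is compact by the hypothesis \(k(1+D^2)^{-\frac{1}{2}} \in \cK(\pE)\) from the definition of \(\CompInv(D)\); therefore the product is compact.

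The main obstacle will be the careful handling of domains when establishing the identity for \(F_{D,k}^* F_{D,k}\), since \((k+D^2)^{-\frac{1}{2}}\) only maps \(\pE\) into \(\Dom D\) and not into \(\Dom D^2\) in general; this is what forces the computation to the dense subspace \(\Ran(k+D^2)^{\frac{1}{2}}\) before extending by continuity.
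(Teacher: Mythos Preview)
Your proposal is correct and follows essentially the same route as the paper: reduce to $F_{D,k}^*F_{D,k}-1$ via the preceding lemma, establish the identity $F_{D,k}^*F_{D,k}=1-(k+D^2)^{-\frac12}k(k+D^2)^{-\frac12}$ on a dense subspace, and then argue compactness of the remainder. One small slip: the set you call ``the dense subspace $\Ran(k+D^2)^{\frac12}$'' is actually all of $\pE$ (since $(k+D^2)^{\frac12}$ is surjective, its inverse being bounded); what you need is $x\in\Dom D=\Dom(k+D^2)^{\frac12}$, which guarantees $y=(k+D^2)^{-\frac12}x\in\Dom D^2$ --- the paper simply computes pointwise on $\Dom D^2$ directly. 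Your factorization through $(1+D^2)^{-\frac12}k(1+D^2)^{-\frac12}$ is a nice explicit justification of a step the paper leaves implicit.
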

    
\begin{proof}
    Using \autoref{thm:selfadjoint_up_to_compact} we can instead look at \(F_{D, k}^*F_{D, k} - 1\). Pointwise on \(\Dom D^2\) we have that
    \[
        \begin{split}
            F_{D, k}^*F_{D, k} 
            &= F_{D, k}^*F_{D, k} (k + D^2)^{-1}(k + D^2) \\
            &= F_{D, k}^* D(k + D^2)^{-\frac{1}{2}} (k + D^2)^{-1}(k + D^2) \\
            &= (k + D^2)^{-\frac{1}{2}}D^2(k + D^2)^{-1} (k + D^2)^{-\frac{1}{2}}(k + D^2) \\
            &= 1 - (k + D^2)^{-\frac{1}{2}}k (k + D^2)^{-\frac{1}{2}} \\
        \end{split}
    \]
    where the second term is compact since \(k (1 + D^2)^{-\frac{1}{2}}\) is compact.
\end{proof}

\begin{lemma}
    \label{thm:commutator_compact}
    For a truly unbounded Kasparov module \((\cA,\pE,D)\) and \(k\in \CompInv(D)\) for any \(a\in \cA\)
    \[
        [F_{D, k}, a] \in \cK(\pE).
    \]
\end{lemma}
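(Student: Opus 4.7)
The plan is to mimic the proof method of the preceding lemmas in this subsection and compute $[F_{D,k}, a]$ pointwise using the integral representation \eqref{eq:pointwise_integral_formula}. Setting $R_\lambda \coloneq (\lambda^2 + k + D^2)^{-1}$, using that $a$ preserves $\Dom D$ by condition (2) of \autoref{def:truly_unbounded_kasparov_modules} together with \autoref{thm:commutator_with_ish_resolvent} applied to $T = k + \lambda^2$ (noting $k \in \CompInv(D)$ preserves $\Dom D$), and expanding $[D^2,a] = D[D,a] + [D,a]D$, one derives on a suitable dense subspace the identity
\[
    [DR_\lambda, a] = [D,a]R_\lambda \,-\, DR_\lambda[k,a]R_\lambda \,-\, DR_\lambda D[D,a]R_\lambda \,-\, DR_\lambda[D,a]DR_\lambda,
\]
which then extends to all of $\pE$ by boundedness of both sides. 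The goal is to show each of the four summands is compact and that their norms integrate over $[0,\infty)$, so that $[F_{D,k},a] = \tfrac{2}{\pi}\int_0^\infty [DR_\lambda, a]\,\rd\lambda$ is a Bochner integral of a compact-valued, norm-integrable integrand and hence lies in $\cK(\pE)$.

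For the compactness of each summand I isolate a compact factor. The term $[D,a]R_\lambda = ([D,a](1+D^2)^{-1/2})\cdot ((1+D^2)^{1/2}R_\lambda)$ is compact by condition (3) of \autoref{def:truly_unbounded_kasparov_modules}, since $(1+D^2)^{1/2}R_\lambda$ is bounded. For $DR_\lambda[k,a]R_\lambda$ I expand $[k,a] = ka - ak$ and use that $kR_\lambda$ and $R_\lambda k$ are compact, which follows from $k(1+D^2)^{-1/2}\in\cK(\pE)$ in the definition of $\CompInv(D)$ in \eqref{eq:compinv_def}. The term $DR_\lambda D \cdot [D,a]R_\lambda$ factors as a bounded operator (since $DR_\lambda D = 1 - (\lambda^2+k)R_\lambda$) times a compact one. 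Finally, for $DR_\lambda\cdot[D,a]\cdot DR_\lambda$, inserting the identity $(1+D^2)^{-1/2}(1+D^2)^{1/2}$ after $[D,a]$ shows it factors as bounded times compact times bounded, using that $(1+D^2)^{1/2}DR_\lambda$ is bounded.

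The main obstacle is producing norm bounds on the four summands that decay faster than $\lambda^{-1}$ as $\lambda\to\infty$. The crude estimate $\|[D,a]R_\lambda\| \lesssim \lambda^{-1}$ obtained from compactness of $[D,a](1+D^2)^{-1/2}$ alone is not integrable, and this is exactly where condition (4) of \autoref{def:truly_unbounded_kasparov_modules} is needed: there exists $m \geq 1$ with $[D,a](1+D^2)^{-1/2+1/(2m)} \in \cL(\pE)$. Using $k \geq 0$ to obtain $R_\lambda \leq (\lambda^2 + D^2)^{-1}$ in the operator order and then interpolating between $\|R_\lambda\| \lesssim \lambda^{-2}$ and $\|(1+D^2)^{1/2}R_\lambda\| \lesssim \lambda^{-1}$ (and similarly between $\|DR_\lambda\|\lesssim \lambda^{-1}$ and $\|(1+D^2)^{1/2}DR_\lambda\| \lesssim 1$) delivers the key estimates
\[
    \|(1+D^2)^{1/2-1/(2m)}R_\lambda\| \,\lesssim\, \lambda^{-1-1/m}, \qquad \|(1+D^2)^{1/2-1/(2m)}DR_\lambda\| \,\lesssim\, \lambda^{-1/m}
\]
for large $\lambda$. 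Combining these with the elementary bounds $\|DR_\lambda\| \lesssim \lambda^{-1}$, $\|R_\lambda\| \lesssim \lambda^{-2}$, $\|[k,a]\| \leq 2\|k\|\,\|a\|$, and $\|DR_\lambda D\| \leq C$ in each of the four factorizations yields $\|[DR_\lambda, a]\| \lesssim \lambda^{-1-1/m}$ uniformly for large $\lambda$. Since $k+D^2$ is invertible the integrand is also continuous and uniformly bounded on a neighborhood of $\lambda = 0$, so the integral converges absolutely in $\cL(\pE)$ and delivers the desired compact operator $[F_{D,k}, a]$.
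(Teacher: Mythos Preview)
Your approach follows the same overall strategy as the paper's---express $[F_{D,k},a]$ via the resolvent integral, decompose $[DR_\lambda,a]$ into pieces, and show each is compact with integrable norm---but there are two genuine gaps.

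First, your formula $DR_\lambda D = 1 - (\lambda^2+k)R_\lambda$ is incorrect. Since $D$ and $R_\lambda$ do not commute (this is precisely the content of \autoref{thm:commutator_with_ish_resolvent}), one has on $\Dom D$
\[
    DR_\lambda D \;=\; D^2R_\lambda + D[R_\lambda,D] \;=\; 1 - (\lambda^2+k)R_\lambda + DR_\lambda[D,k]R_\lambda,
\]
so your third summand is missing the contribution $DR_\lambda[D,k]R_\lambda\,[D,a]R_\lambda$. This extra term is compact (via the $[D,a]R_\lambda$ factor) and decays fast enough, so the fix is easy---but once you include it, your decomposition coincides term for term with the paper's final expression.

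Second, and more delicate, you wave away the derivation of your four-term identity as holding ``on a suitable dense subspace'' and then extending by boundedness. The naive expansion $[R_\lambda,a] = -R_\lambda[k+D^2,a]R_\lambda$ followed by $[D^2,a] = D[D,a]+[D,a]D$ requires $a\Dom D^2 \subseteq \Dom D^2$, which is \emph{not} assumed in \autoref{def:truly_unbounded_kasparov_modules}; without it the composition $DR_\lambda D\cdot[D,a]R_\lambda$ is not even well-defined pointwise, since $[D,a]R_\lambda$ need not land in $\Dom D$. The paper explicitly flags this as ``a big complication'' and circumvents it by inserting $(\lambda^2+k+D^2)R(\lambda)=1$ on either side of $a$ in $aR(\lambda)$ and $R(\lambda)a$ separately, never forming $D^2a$ or $D[D,a]$ directly. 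Your argument needs either this careful rewriting or an explicit justification that such a dense subspace exists and that all four terms extend boundedly from it.
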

    
\begin{proof}
    Let \(R(\lambda) = (\lambda^2 + k + D^2)^{-1}\), then pointwise on \(\pE\) we have that
    \[
        [F_{D, k},a] = \frac{2}{\pi} \int_0^\infty [DR(\lambda), a] \, \rd\lambda.
    \]
    We have that
    \[
            [k, R(\lambda)] 
            = -[D^2, R(\lambda)]
            = D R(\lambda) [D, k] R(\lambda) + R(\lambda) [D, k] R(\lambda)D
    \]
    holds pointwise on \(\Dom D^2\) and extends by continuity to \(\Dom D\). 
    Using this we can rewrite \(a R(\lambda)\) and \(R(\lambda)a\) pointwise on \(\pE\) as
    \[
        \begin{split}
            a R(\lambda)
            &= (\lambda^2 + k + D^2) R(\lambda) a R(\lambda) \\
            &= \lambda^2 R(\lambda) a R(\lambda) \\
            &\hphantom{=}+ R(\lambda) k a R(\lambda) \\
            &\hphantom{=}+ R(\lambda) [D, k] R(\lambda) [D, a] R(\lambda) \\
            &\hphantom{=}+ D R(\lambda) [D, a] R(\lambda) \\
            &\hphantom{=}+ R(\lambda) D a D R(\lambda)
        \end{split}
        \quad
        \begin{split}
            R(\lambda) a
            &= R(\lambda) a (\lambda^2 + k + D^2) R(\lambda) \\
            &= \lambda^2 R(\lambda) a R(\lambda) \\
            &\hphantom{=}+ R(\lambda) a k R(\lambda) \\
            &\hphantom{=}- R(\lambda) [D,a] D R(\lambda) \\
            &\hphantom{=}+ R(\lambda) D a D R(\lambda) \\
            &
        \end{split}
    \]
    which we combine as
    \[
        \begin{split}
            [D R(\lambda), a]
            &= [D, a]R(\lambda) + D[R(\lambda), a] \\
            &= D R(\lambda) a k R(\lambda) \\
            &\hphantom{=}- D R(\lambda) k a R(\lambda) \\
            &\hphantom{=}- D R(\lambda) [D,a] D R(\lambda) \\
            &\hphantom{=}+ (\lambda^2 + k) R(\lambda) [D, a] R(\lambda) \\
            &\hphantom{=}- D R(\lambda)[D,k]R(\lambda) [D,a] R(\lambda) \\
            &= D R(\lambda) a k R(\lambda)^{\frac{1}{2} - \frac{\epsilon}{2}} R(\lambda)^{\frac{1}{2} + \frac{\epsilon}{2}} \\
            &\hphantom{=}- D R(\lambda) k a R(\lambda)^{\frac{1}{2} - \frac{\epsilon}{2}} R(\lambda)^{\frac{1}{2} + \frac{\epsilon}{2}} \\
            &\hphantom{=}- (D R(\lambda)^\frac{1}{2}) R(\lambda)^\frac{\epsilon}{2} (R(\lambda)^{\frac{1}{2}-\frac{\epsilon}{2}} [D,a]) (D R(\lambda)^\frac{1}{2}) R(\lambda)^\frac{1}{2} \\
            &\hphantom{=}+ (\lambda^2 + k) R(\lambda) ([D,a] R(\lambda)^{\frac{1}{2}-\frac{\epsilon}{2}}) R(\lambda)^{\frac{1}{2}+\frac{\epsilon}{2}} \\
            &\hphantom{=}- D R(\lambda) [D,k] R(\lambda) ([D,a] R(\lambda)^{\frac{1}{2}-\frac{\epsilon}{2}}) R(\lambda)^{\frac{1}{2}+\frac{\epsilon}{2}}. \\
        \end{split}
    \]
    Using that \(\norm{R(\lambda)^\alpha}\leq (\lambda^2 + \norm{k})^{-\alpha}\) for \(\alpha>0\) we obtain the norm estimate
    \[
        \norm{[D R(\lambda), a]} \lesssim (\lambda^2 + \norm{k})^{-\frac{1}{2}-\frac{\epsilon}{2}}
    \]
    and \([D R(\lambda), a]\) is compact since \(k R(\lambda)^{\frac{1}{2}}\) and \([D,a] R(\lambda)^{\frac{1}{2}}\) are compact.
\end{proof}

Note also that a big complication in the proof of \autoref{thm:commutator_compact} is that we do not assume that \(a\) preserve \(\Dom D^2\). This concludes the proof of \autoref{thm:gives_bounded_cycles}.

\subsection{The relative index theorem in \texorpdfstring{\(KK\)}{KK}-theory}

Examining the requirements for \autoref{thm:relative_index_for_differential_operators}, we note that the natural algebra for these cycles to be associated with is the continuous functions on a two-point compactification of \(U\). Let us explain this further.

For fixed \(i,j\), considering \(U, V_i, W_j\) in the diagram \eqref{eq:the_diagram} as subsets of \(M_{ij}\) and define the two-point compactification of \(U\) as \(U^{2\pt} = U / ((M_{ij}\setminus V_i) \cup (M_{ij}\setminus W_j))\) which is homeomorhpic to \(U\cup\set{\pt_1, \pt_2}\) with topology induced by the map
\begin{equation}
    \label{eq:twopoint_comp_map_inducing_topology}
    x\mapsto \begin{cases}
        x &, x\in U \\
        \pt_1 &, x\in M_{ij}\setminus V_i \\
        \pt_2 &, x\in M_{ij}\setminus W_j
    \end{cases}
\end{equation}
This construction is independent of \(i\) and \(j\) since the boundary \(\d U\) consists of the disjoint and topologically separated sets \(\d U \cap V_i = \d W_j\) and \(\d U \cap W_j=\d V_i\). Note that \(U^{2\pt}\) is compact since \(U\) is assumed to be precompact. Now the algebra \(C(U^{2\pt})\) can be seen as a subalgebra of functions \(C_b(M_{ij})\) using the pullback of \eqref{eq:twopoint_comp_map_inducing_topology} and \(C_0(U)\) is an ideal in \(C(U^{2\pi})\). This has similarities to the end compactification \(X^\text{end} = \dirlim_{K~\text{compact}} X \cup \pi_0(X\setminus K)\).

The requirements for \autoref{thm:relative_index_for_differential_operators} are essentially that \(D_{11}\) and \(D_{22}\) are local operators that define truly unbounded Kasparov \((C(U^{2\pt}),B)\)-modules that agree on \(U\) and \(D_{12}, D_{21}\) are constructed by gluing. Note that the unitary in \autoref{thm:bunkes_unitary} used to prove \autoref{thm:relative_index_for_differential_operators} is a unitary in \(M_2(C(U^{2\pt}))\) that maps a core of \(D_{11}\oplus D_{22}\) onto a core of \(D_{12}\oplus D_{21}\). One could write out abstract versions of the requirements needed for \autoref{thm:relative_index_for_differential_operators}, however, this does not seem to be very illuminating. See \cite{Nazaikinskii_2015} for a neat abstract setup of the relative index theorem for bounded cycles, which given the work in this paper we expect should be possible to recreate in the unbounded setting. Here we will instead show the key statement regarding relatively compact perturbations on the level of truly unbounded Kasparov modules.

\begin{lemma}
    \label{thm:diff_of_func_is_comp}
    Let \(D\) and \(D'\) be self-adjoint regular operator on a Hilbert \(B\)-modules \(\pE\) and \(\pE'\), respectively. Assume that \(U\in \cL(\pE, \pE')\) is a unitary such that 
    \begin{enumerate}
        \item \(U\) maps a core \(\cE\) of \(D\) onto a core of \(D'\);
        \item \((U^*D'U-D)(1+D^2)^{-\frac{1}{2}}\) extends to a compact operator in \(\cK(\pE)\);
        \item \((U^*D'U-D)(1+D^2)^{-\frac{1}{2}+\epsilon}\) extends to a bounded operator in \(\cL(\pE)\) for some \(\epsilon>0\).
    \end{enumerate}
    Then 
    \begin{equation}
        \label{eq:chi_diff_compact}
        U^*\chi(D')U - \chi(D)\in \cK(\pE)
    \end{equation}
    for any \(\chi\in C_b(\bR)\) with well-defined limits \(\lim_{x\to\pm \infty} \chi(x)\).
\end{lemma}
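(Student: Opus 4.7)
The plan is to view $\Phi(\chi) := U^*\chi(D')U - \chi(D)$ as a bounded linear map from the unital $C^*$-algebra $C([-\infty,+\infty])$ of continuous bounded functions on $\bR$ with well-defined limits at $\pm\infty$ into $\cL(\pE)$, and to show its preimage of $\cK(\pE)$ is the whole domain. Since $\Phi$ is $*$-preserving, linear, and norm-bounded by $2$ via continuous functional calculus, and $\cK(\pE)$ is norm-closed in $\cL(\pE)$, $\Phi^{-1}(\cK(\pE))$ is a norm-closed self-adjoint linear subspace; using $UU^* = 1$, the identity
\[
    \Phi(\chi_1\chi_2) = \Phi(\chi_1)\cdot U^*\chi_2(D')U + \chi_1(D)\cdot \Phi(\chi_2)
\]
shows it is moreover a unital subalgebra. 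By Stone--Weierstrass it thus suffices to verify $\Phi(\chi) \in \cK(\pE)$ on a subset separating the points of $[-\infty,+\infty]$, for which I take $\{(x - i)^{-1}, (x + i)^{-1}, \chi_\infty\}$ with $\chi_\infty(x) := x/\sqrt{1+x^2}$.

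Set $T := U^* D' U$, a self-adjoint regular operator. By assumption (ii) combined with \autoref{thm:relativly_compact_pert_domains} applied with $V := U$, $\Dom D = \Dom T$ with equivalent graph norms and $T - D \in \cK(\Dom D, \pE)$. Unitarity of $U$ gives $U^*(D' \pm i\mu)^{-1}U = (T \pm i\mu)^{-1}$ for every $\mu > 0$, and since $(D \pm i\mu)^{-1}\pE = \Dom D = \Dom T$, the second resolvent identity
\[
    (T \pm i\mu)^{-1} - (D \pm i\mu)^{-1} = (T \pm i\mu)^{-1}(D - T)(D \pm i\mu)^{-1}
\]
is valid on all of $\pE$. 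Factoring $(D - T)(D \pm i\mu)^{-1} = \bigl((D - T)(1+D^2)^{-1/2}\bigr)\cdot \bigl((1+D^2)^{1/2}(D \pm i\mu)^{-1}\bigr)$ as compact-times-bounded shows the right-hand side is compact; taking $\mu = 1$ settles the resolvent generators.

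For $\chi_\infty$, combine the integral representation $\chi_\infty(A) = \frac{2}{\pi}\int_0^\infty A(\lambda^2+1+A^2)^{-1}\,\rd\lambda$ from \autoref{sec:bounded_cycles} with the partial-fraction identity $A(A^2+\mu^2)^{-1} = \tfrac{1}{2}\bigl[(A - i\mu)^{-1} + (A + i\mu)^{-1}\bigr]$ to rewrite
\[
    \chi_\infty(T) - \chi_\infty(D) = \frac{1}{\pi}\int_0^\infty \sum_\pm (T \pm i\mu_\lambda)^{-1}(D - T)(D \pm i\mu_\lambda)^{-1}\,\rd\lambda,
\]
with $\mu_\lambda := \sqrt{\lambda^2 + 1}$. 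Each integrand is compact by the preceding paragraph. For norm convergence, apply assumption (iii) together with $\|(T \pm i\mu)^{-1}\| \leq \mu^{-1}$ and the elementary spectral estimate
\[
    \|(1+D^2)^{1/2-\epsilon}(D \pm i\mu)^{-1}\| = \sup_{x \in \bR}\frac{(1+x^2)^{1/2-\epsilon}}{\sqrt{x^2+\mu^2}} \lesssim \mu^{-2\epsilon}
\]
to get integrand norm $\lesssim (1+\lambda)^{-1-2\epsilon}$, so that the integral converges in the norm topology to a compact operator.

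The main obstacle is this last step. The naive bound using only assumption (ii) would yield integrand norm of order $\mu^{-1}$, which fails to be integrable at infinity; the $\epsilon$ improvement in assumption (iii) is exactly what produces the extra $\mu^{-2\epsilon}$ decay needed to make the resolvent integral converge in norm.
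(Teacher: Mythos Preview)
Your proof is correct and follows essentially the same approach as the paper: reduce to generators via the closed-subalgebra/Stone--Weierstrass argument, handle the resolvents $(x\pm i)^{-1}$ by the second resolvent identity, and treat $\chi_\infty(x)=x(1+x^2)^{-1/2}$ by writing it as a norm-convergent integral of resolvent-type terms where assumption (iii) supplies the extra $\mu^{-2\epsilon}$ decay. The one notable difference is your treatment of $\chi_\infty$: the paper first splits $\chi_\infty(D')-\chi_\infty(D)=(D'-D)(1+D'^2)^{-1/2}+D\bigl[(1+D'^2)^{-1/2}-(1+D^2)^{-1/2}\bigr]$ and then factors $(t^2+A^2)^{-1}=(A+it)^{-1}(A-it)^{-1}$ inside the integral, whereas your partial-fraction identity $A(A^2+\mu^2)^{-1}=\tfrac12[(A-i\mu)^{-1}+(A+i\mu)^{-1}]$ collapses both steps into a single resolvent difference from the outset---a minor but genuine streamlining.
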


\begin{proof}
By \autoref{thm:relativly_compact_pert_domains}, \(U\) restricts to an operator in \(\cL(\Dom D, \Dom D')\) with inverse \(U^*\). Since \(U^*\chi(D')U = \chi(U^*D'U)\) we can reduce the statement to the case when \(U=1\). Note first that if \eqref{eq:chi_diff_compact} hold for \(\chi_1\) and \(\chi_2\), then it holds for \(\chi_1 + \chi_2\) and \(\chi_1\chi_2\) where the later can be seen since
\[
    \begin{split}
    \chi_1(D') \chi_2(D') - \chi_1(D) \chi_2(D) 
    &= (\chi_1(D') - \chi_1(D)) \chi_2(D') \\
    &\hphantom{=}+ \chi_1(D) (\chi_2(D') - \chi_2(D)) 
    \end{split}
\]
and \(\cK(\pE)\) is an ideal in \(\cL(\pE)\). Now since the algebra generated by \((x+i)^{-1}\) and \((x-i)^{-1}\) is dense in \(C_0(\bR)\) and
\[
    (D'\pm i)^{-1} - (D \pm i)^{-1} = (D'\pm i)^{-1} (D - D') (D \pm i)^{-1}
\]
is compact, we can with continuity conclude that \eqref{eq:chi_diff_compact} holds for any \(\chi\in C_0(\bR)\). Equation \eqref{eq:chi_diff_compact} also holds trivially for constant \(\chi\) and if we show that \eqref{eq:chi_diff_compact} holds for \(\chi(x)=x(1+x^2)^{-\frac{1}{2}}\) then we are done. That
\[
    \begin{split}
    D'(1+D'^2)^{-\frac{1}{2}} - D(1+D^2)^{-\frac{1}{2}}
    &= 
    (D' - D)(1+D'^2)^{-\frac{1}{2}} \\
    &\hphantom{=}+ D((1+D'^2)^{-\frac{1}{2}} - (1+D^2)^{-\frac{1}{2}})
    \end{split}
\]
is compact can be concluded since the pointwise expression
\[
    \begin{split}
        &D((1+D'^2)^{-\frac{1}{2}} - (1+D^2)^{-\frac{1}{2}}) \\
        &= \frac{2}{\pi} \int_0^\infty D((1+\lambda^2+D'^2)^{-1} - (1+\lambda^2+D^2)^{-1}) \, \rd\lambda \\
        &= \frac{1}{\pi} \int_1^\infty D((t^2+D'^2)^{-1} - (t^2+D^2)^{-1}) \frac{t \rd t}{\sqrt{t^2-1}} \\
        &= \frac{1}{\pi} \int_1^\infty D \bigg(
            (D' + it)^{-1}(D' - it)^{-1} - (D + it)^{-1}(D - it)^{-1}
            \bigg) \frac{t \rd t}{\sqrt{t^2-1}} \\
        &= \frac{1}{\pi} \int_1^\infty D \bigg(
            (D' + it)^{-1}(D-D')(D+it)^{-1}(D' - it)^{-1}
            \\&\phantom{\frac{1}{\pi} \int_1^\infty} +(D + it)^{-1}(D' - it)^{-1}(D-D')(D - it)^{-1}
            \bigg) \frac{t \rd t}{\sqrt{t^2-1}} \\
    \end{split}
\]
is a norm-convergent integral of compact operators.
\end{proof}

\begin{proposition}
    \label{thm:relatively_comp_pert_KK}
    Let \(D, D'\) and \(U\) be as in \autoref{thm:diff_of_func_is_comp}. If \((\cA, \pE, D)\) defines a truly unbounded Kasparov \((A,B)\)-module then so does \((\cA', \pE', D')\) for \(\cA' \coloneq U\cA U^*\) and they give the same class in \(KK^*(A,B)\).
\end{proposition}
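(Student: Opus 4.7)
The proof has two parts: verifying that $(\cA', \pE', D')$ satisfies the axioms of Definition \ref{def:truly_unbounded_kasparov_modules}, and then comparing the $KK$-classes.

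For the first part, I would begin by noting that Lemma \ref{thm:relativly_compact_pert_domains} (applied with $V = U$) already gives that $U$ restricts to a bounded invertible map $\Dom D \to \Dom D'$, so the graph norms of $D$ and $D'$ are equivalent along $U$. Setting $R := D' - UDU^* = U(U^*D'U - D)U^*$ and combining the hypotheses of Lemma \ref{thm:diff_of_func_is_comp} with this graph-norm equivalence, I would show that $R \colon \Dom D' \to \pE'$ is compact and that $R(1+D'^2)^{-1/2+\epsilon} \in \cL(\pE')$ for some $\epsilon > 0$. The four axioms for $(\cA', \pE', D')$ are then checked in order: $B$-Fredholmness of $D'$ is Lemma \ref{thm:relativly_compact_pert_Fredholm}; the inclusion $a'\Dom D' \subseteq \Dom D'$ for $a' = UaU^*$ is immediate from $U\Dom D = \Dom D'$. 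For the commutator conditions I would use the identity
\[
    [D', a'] = U[D,a]U^* + [R, a'] \quad \text{on } \Dom D',
\]
and check that both summands composed with $(1+D'^2)^{-1/2}$ are compact and composed with $(1+D'^2)^{-1/2+1/(2m)}$ are bounded for a suitably large $m$.

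For the second part, I would switch to bounded representatives via Proposition \ref{thm:Wahl_gives_bounded_cycles}. Choose any $\chi \in C_b(\bR)$ as in Lemma \ref{thm:chi_exists} with $\chi(D)^2 - 1 \in \cK(\pE)$. The key observation is that Lemma \ref{thm:diff_of_func_is_comp} gives $U^*\chi(D')U - \chi(D) \in \cK(\pE)$, whence squaring and using that $\cK(\pE)$ is an ideal yields $U^*(\chi(D')^2 - 1)U \in \cK(\pE)$, so the same $\chi$ also satisfies $\chi(D')^2 - 1 \in \cK(\pE')$. The bounded cycle $(\pE, \chi(D))$ is then unitarily equivalent, via $U$, to the cycle $(\pE', U\chi(D)U^*)$ over $\cA' = U\cA U^*$, and this in turn differs from $(\pE', \chi(D'))$ by the compact operator $\chi(D') - U\chi(D)U^*$, so both represent the same class in $KK^*(A,B)$ by Proposition \ref{thm:Wahl_gives_bounded_cycles} and the standard invariance of $KK$-classes under compact perturbations.

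The main obstacle is the verification of the order condition (axiom 4) for $(\cA', \pE', D')$. The decomposition $[D', a'] = U[D,a]U^* + [R, a']$ reduces boundedness of $[D', a'](1+D'^2)^{-1/2+1/(2m)}$ to controlling operators of the form $(1+D^2)^s U^* (1+D'^2)^{-s}$ and $(1+D'^2)^s a' (1+D'^2)^{-s}$ for $s \in (0, 1/2)$. These are bounded at the endpoints $s = 0$ (trivially) and $s = 1/2$ (by Lemma \ref{thm:relativly_compact_pert_domains} and Lemma \ref{thm:closed_graph_thm_hilbert_modules}), and I would interpolate to intermediate $s$ by hand using the resolvent integral representation $x^{-s} = c_s \int_0^\infty \lambda^{-s}(\lambda + x)^{-1}\,\rd \lambda$ for positive $x$, rather than invoking a classical complex interpolation theorem, which is not readily available in the Hilbert $B$-module setting.
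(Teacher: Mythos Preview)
Your proposal is correct and follows essentially the same route as the paper. The commutator decomposition you write, $[D',a'] = U[D,a]U^* + [R,a']$ with $R = U(U^*D'U - D)U^*$, is exactly the paper's identity $[D',UaU^*] = U[U^*D'U - D,a]U^* + U[D,a]U^*$ rewritten, and both reduce the axioms to an interpolation statement for $U$ between the domain scales of $D$ and $D'$; the paper simply invokes ``complex interpolation'' where you propose the resolvent integral $x^{-s} = c_s\int_0^\infty \lambda^{-s}(\lambda+x)^{-1}\,\rd\lambda$, which is a legitimate and arguably more careful way to carry this out in the Hilbert-module setting. For the equality of $KK$-classes the paper just points to \autoref{thm:diff_of_func_is_comp}; your expansion---showing the same $\chi$ works for $D'$ and then using unitary equivalence plus compact perturbation---is precisely the argument that sentence is abbreviating.
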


\begin{proof}
    By \autoref{thm:relativly_compact_pert_domains}, \(U\) restricts to an invertible element in \(\cL(\Dom D, \Dom D')\) with inverse \(U^*\) and hence \(a'\Dom D'\subseteq \Dom D'\) for any \(a'\in \cA'\). Using complex interpolation we also see that \(U\) is bounded as an invertible element in \(\cL(\Dom \abs{D}^t, \Dom \abs{D'}^t)\) for \(0\leq t \leq 1\). Now for \(a\in \cA\) we see that on \(U\cE\)
    \[
        [D', UaU^*] = U[U^*D'U - D, a]U^* + U[D, a]U^*
    \]
    from which we obtain our required commutator conditions. Also, \(D'\) is \(B\)-Fredholm by \autoref{thm:relativly_compact_pert_Fredholm}.

    That the cycles define the same class can be seen from \autoref{thm:diff_of_func_is_comp}.
\end{proof}

Note that in addition to \(D'\) being a relatively compact perturbation of \(D\) in \autoref{thm:diff_of_func_is_comp} we also require that their difference is bounded on \(\Dom \abs{D}^t\) for some \(t<1\). This condition is only needed to show that the classes are the same in \autoref{thm:relatively_comp_pert_KK}, not to obtain a truly unbounded Kasparov module, and is expected by the author to be an artifact of the method used. In the case when \(A=\bC\), we can see that \(D\) and \(D'\) define the same class since they have the same index and therefore their bounded transforms are homotopic by non-commutative Atiyah-Jänich theorem \cite[Section 4.4]{Gracia-Bondia_Varilly_Figueroa_2001}.

\providecommand{\bysame}{\leavevmode\hbox to3em{\hrulefill}\thinspace}

\end{document}